\title{\large{\textbf{LEVEL SET PERCOLATION FOR RANDOM INTERLACEMENTS AND THE GAUSSIAN FREE FIELD}}}
\date{}

\documentclass[a4paper,11pt, leqno]{article}
\usepackage[english]{babel}
\usepackage[a4paper,left=4.0cm,right=4.0cm,top=3.0cm,bottom=2.8cm]{geometry}
\usepackage{amsfonts, amsmath, amssymb, amsthm, mathrsfs}
\usepackage{stmaryrd}
\usepackage{lmodern}

\usepackage{psfrag}
\usepackage{graphicx}
\usepackage{verbatim}
\usepackage{verbatim}
\usepackage[font=small]{caption}

\addtolength{\oddsidemargin}{-1cm}
\addtolength{\textwidth}{2cm}
\addtolength{\voffset}{-0.5cm}

\usepackage[hang,flushmargin]{footmisc}

\numberwithin{equation}{section}

\newtheorem{theorem}{Theorem}[section]
\newtheorem{lemma}[theorem]{Lemma}

\newtheorem{proposition}[theorem]{Proposition}

\newtheorem{corollary}[theorem]{Corollary}

\theoremstyle{remark}
\newtheorem*{proof1}{Proof of Lemma \ref{L:U,ALPHA_BAD}}
\newtheorem*{proof2}{Proof of Lemma \ref{L:FE2_AbsCont}}

\theoremstyle{definition}
\newtheorem{remark}[theorem]{Remark}

\addtocounter{section}{-1}

\begin{document}

\maketitle
\thispagestyle{empty}

\begin{center}
\vspace{-1.1cm}
Pierre-Fran\c cois Rodriguez\footnote{\noindent Departement Mathematik, ETH Z\"urich, CH-8092 Z\"urich, Switzerland. \\ \indent    This research was supported in part by the grant ERC-2009-AdG 245728-RWPERCRI.} \\
\end{center}
\vspace{1.0cm}
\begin{abstract}
\centering
\begin{minipage}{0.9\textwidth}
We consider continuous-time random interlacements on $\mathbb{Z}^d$, $d\geq3$, and investigate the percolation model where a site $x$ of $\mathbb{Z}^d$ is occupied if the total amount of time spent at $x$ by all the trajectories of the interlacement at level $u \geq 0$ exceeds some constant $\alpha \geq 0$, and empty otherwise. We also investigate percolation properties of empty sites. A recent isomorphism theorem \cite{S2} enables us to ``translate'' some of the relevant questions into the language of level-set percolation for the Gaussian free field on $\mathbb{Z}^d$, $d\geq3$, about which new insights of independent interest are also gained.
\end{minipage}
\end{abstract}

\vspace{8cm}
\begin{flushright}
December 2013
\end{flushright}

\newpage

\thispagestyle{empty}
\mbox{}
\newpage

\setcounter{page}{1}

\section{Introduction} \label{INTRODUCTION}

In the present work, we consider the field of occupation times for continuous-time random interlacement at level $u \geq 0$ on $\mathbb{Z}^d$, $d \geq 3$, and investigate the percolative properties of the random subset of $\mathbb{Z}^d$ obtained by keeping only those sites at which the occupation time exceeds some given cut-off value $\alpha \geq 0$. We also consider the percolative properties of the complement of this set in $\mathbb{Z}^d$. Our main interest is to infer for which values of the parameters $(u,\alpha)$ these random sets percolate. A recent isomorphism theorem \cite{S2} relates the field of occupation times for continuous-time random interlacements on $\mathbb{Z}^d$, $d \geq 3$ (and more generally, on any transient weighted graph) to the Gaussian free field on the same graph. We will exploit this correspondence as a transfer mechanism to reformulate some of the problems in terms of questions regarding level-set percolation for the Gaussian free field. This will allow us to use certain renormalization techniques recently developed in this context in \cite{RS}. Additionally, we derive new results concerning ``two-sided'' level-set percolation for the Gaussian free field on $\mathbb{Z}^d$, $d\geq 3$, where, in contrast to $(0.2)$ of \cite{RS} (see also \cite{BLM}), the level sets consist of those sites at which the \textit{absolute value} of the corresponding field variable exceeds a certain level $h\geq 0$. 

\bigskip

We now describe our results and refer to Section \ref{NOTATION} for details.  We consider continuous-time random interlacements on $\mathbb{Z}^d$, $d\geq 3$. Somewhat informally, this model can be defined as a cloud of simple random walk trajectories modulo time-shift on $\mathbb{Z}^d$ constituting a Poisson point process, where a non-negative parameter $u$ appearing multiplicatively in the intensity measure regulates how many paths enter the picture (we defer a precise definition to the next section, see the discussion around \eqref{intensity}). For any $u \geq 0$ and $\alpha \geq 0$, we introduce the (random) subsets of $\mathbb{Z}^d$ 
\begin{equation} \label{I^u,alpha}
\mathcal{I}^{u,\alpha} = \{ x\in \mathbb{Z}^d  \;; L_{x,u} > \alpha \}, \qquad \mathcal{V}^{u,\alpha} = \{ x\in \mathbb{Z}^d  \;; L_{x,u} \leq  \alpha \} = \mathbb{Z}^d \setminus \mathcal{I}^{u,\alpha}, 
\end{equation}
where $(L_{x,u})_{x \in \mathbb{Z}^d}$ denotes the field of occupation times at level $u$, see \eqref{L}, and ask for which values of the parameters $u$ and $\alpha$ these sets percolate. Note that for all $u \geq 0$, $\mathcal{I}^{u,0}$  corresponds to the (discrete-time) interlacement set at level $u$ introduced in $(0.7)$ of \cite{S1} (see also \eqref{I^u} and \eqref{I^u,0} below) and $\mathcal{V}^{u,0}$ to the according vacant set. Before addressing the core issue of describing the phase diagrams for percolation of the random sets $\mathcal{I}^{u,\alpha}$ and $\mathcal{V}^{u,\alpha}$, as $u$ and $\alpha$ vary, we prove uniqueness of the infinite clusters, whenever they exist. More precisely, we show in Corollary \ref{C:UNIQUENESS} that for all $u \geq 0$, $\alpha > 0$ and $d\geq3$,
\begin{equation} \label{RESULT:UNIQUENESS}
\text{$\mathbb{P}$-a.s., $\mathcal{I}^{u,\alpha}$ and $\mathcal{V}^{u,\alpha}$ contain \textit{at most one} infinite connected component},
\end{equation}
where $\mathbb{P}$ denotes the law of the interlacement point process, as defined below \eqref{intensity}. For $\alpha =0 $, \eqref{RESULT:UNIQUENESS} is already known and follows from \cite{S1}, Corollary 2.3, and \cite{T1}, Theorem 1.1. 

\bigskip

Our main results concern the existence/absence of infinite clusters inside $\mathcal{I}^{u,\alpha}$ and $\mathcal{V}^{u,\alpha}$, in terms of the parameters $u$ and $\alpha$. Let us define the functions
\begin{equation} \label{eta^I}
 \eta^{\mathcal{I}}(u,\alpha) = \mathbb{P}\big[ 0 \stackrel{\mathcal{I}^{u,\alpha}}{\longleftrightarrow} \infty \big], \qquad  \eta^{\mathcal{V}}(u,\alpha) = \mathbb{P}\big[ 0 \stackrel{\mathcal{V}^{u,\alpha}}{\longleftrightarrow} \infty \big], \qquad \text{for $u \geq 0$, $\alpha \geq 0$},
\end{equation}
to denote the probabilities that $0$ lies in an infinite cluster of $\mathcal{I}^{u,\alpha}$ and $\mathcal{V}^{u,\alpha}$, respectively. Observing that $ \eta^{\mathcal{I}}(u,\alpha) $ is decreasing in $\alpha$ for every (fixed) value of $u \geq 0$, it is sensible to introduce the critical parameter
\begin{equation} \label{alpha_*}
 \alpha_*(u) = \inf \{ \alpha \geq 0 \; ; \; \eta^{ \mathcal{I}}(u,\alpha) =0 \} \in [0,\infty] , \quad \text{for }u \geq 0
\end{equation} 
(with the convention $\inf \emptyset = \infty$). It is not difficult to see that the function $\alpha_*(\cdot)$ is non-decreasing, see \eqref{alpha_*_monotone} below. Our main results regarding percolation of the sets $\mathcal{I}^{u,\alpha}$ state that
\begin{equation} \label{RESULT: alpha_*}
0 < \alpha_*(u) < \infty, \qquad \text{for all $u > 0$ and $d \geq 3$}
\end{equation}
(see Theorem \ref{T:ALPHA_*PERC} for positivity of $\alpha_*(u)$ and Theorem \ref{T:ALPHA_*} for finiteness). In words, the sets $ \mathcal{I}^{u,\alpha}$ exhibit a non-trivial percolation phase transition as $\alpha$ varies, for every (fixed) positive value of $u$. 

In a similar vein, for $\mathcal{V}^{u,\alpha}$, we introduce the critical parameter
\begin{equation} \label{u_*}
u_*(\alpha) = \inf\{ u\geq 0 \; ; \; \eta^{\mathcal{V}}(u,\alpha) = 0  \} \in [0,\infty], \quad \text{for } \alpha \geq 0,
\end{equation}
which is well-defined since $\eta^{\mathcal{V}}(\cdot,\alpha)$ is non-increasing for every value of $\alpha \geq 0$ (we will comment on the asymmetry in the role of $u$ and $\alpha$ in \eqref{alpha_*} and \eqref{u_*} below; see the discussion following \eqref{isom_THM}). It is an easy matter to verify that the function $u_*(\cdot)$ is non-decreasing, see \eqref{u_*_monotone}, and that $u_*(0)=u_*$, where $u_*$ refers to the critical point for percolation of the vacant set of (discrete-time) random interlacements, as defined in $(0.13)$ of \cite{S1}, which is known to be finite and strictly positive for all dimensions $d \geq 3$, see \cite{S1}, Theorem 4.3, and \cite{SS2}, Theorem 3.4 (see also Theorem 5.1 in \cite{S3} for a more general result). Our main conclusion concerning percolation of the sets $\mathcal{V}^{u,\alpha}$, see Theorem \ref{T:U_*} below, asserts that
\begin{equation} \label{RESULT: u_*}
 (0 < u_* \leq) \ u_*(\alpha) < \infty, \qquad \text{for all $ \alpha \geq 0$ and $d \geq 3$}. 
\end{equation}
In fact, not only are we able to establish finiteness of the critical parameters in \eqref{RESULT: alpha_*} and \eqref{RESULT: u_*}, but also the stronger result that (see \eqref{T:ALPHA_*2}, \eqref{T:U_*2} and Remark \ref{R:U_*})
\begin{equation} \label{RESULT:conn_function}
 \begin{split}
  &\text{the connectivity functions $\mathbb{P}\big[ 0 \stackrel{\mathcal{I}^{u,\alpha}}{\longleftrightarrow} x \big]$ and $\mathbb{P} \big[ 0 \stackrel{\mathcal{V}^{u,\alpha}}{\longleftrightarrow} x \big]$ have stretched} \\
&\text{exponential decay in $x$ as $|x| \to \infty$, for all $u \geq 0 $ and $\alpha= \alpha(u)$ sufficiently large,} \\
&\text{respectively for all $\alpha \geq 0$ and $u=u(\alpha)$ sufficiently large,}
 \end{split}
\end{equation}
where the events in the probabilities refer to the existence of a nearest-neighbor path in $\mathcal{I}^{u,\alpha}$, resp. $\mathcal{V}^{u,\alpha}$, connecting $x$ to the origin. Tentative phase diagrams for percolation of $\mathcal{I}^{u, \alpha}$ and $\mathcal{V}^{u, \alpha}$, $u,  \alpha \geq 0$, can be found in Figure \ref{phasediagram} below.

\begin{figure} [h!] 
\centering
\psfragscanon
\includegraphics[width=16cm]{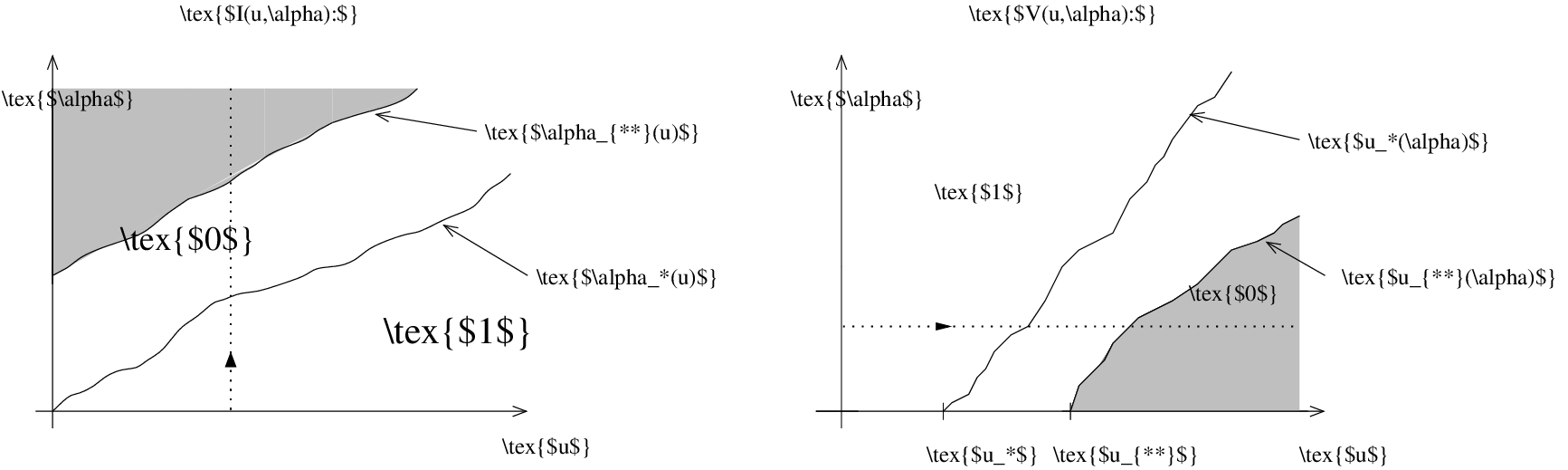}
\caption{the functions $I(u,\alpha)= 1\{\text{$\mathbb{P}$-a.s., $\mathcal{I}^{u,\alpha}$ has a unique infinite component} \}$ and $V(u,\alpha)= 1\{\text{$\mathbb{P}$-a.s., $\mathcal{V}^{u,\alpha}$ has a unique infinite component} \}$. The shaded areas, in which the corresponding connectivity functions have stretched exponential decay, define the auxiliary critical lines $\alpha_{**}(u)$ and $u_{**}(\alpha)$, see Remark \ref{R:U_*}, 2). It is presently an open problem whether the two critical lines $\alpha_*(\cdot)$ and $\alpha_{**}(\cdot)$, respectively
 $u_*(\cdot)$ and $u_{**}(\cdot)$, actually coincide.}
\label{phasediagram}
\end{figure}

As hinted above, some of the proofs rely on Theorem 0.1 of \cite{S2}, which relates $(L_{x,u})_{x\in \mathbb{Z}^d}$ to the Gaussian free field on $\mathbb{Z}^d$, see \eqref{isom_THM}. In particular, en route to proving \eqref{RESULT:conn_function}, we show the following result, interesting in its own right. Let $P^G$ denote the canonical law of Gaussian free field on $\mathbb{Z}^d$, i.e. $P^G$ is the probability measure on $\mathbb{R}^{\mathbb{Z}^d}$ such that,
\begin{equation} \label{GFF}
\begin{split}
&\text{under $P^G$, the canonical field $\varphi$ = $(\varphi_x)_{x \in \mathbb{Z}^d}$ is a centered Gaussian} \\
&\text{field with covariance $\mathbb{E}[\varphi_x \varphi_y] = g (x,y)$}, \text{ for all } x,y \in \mathbb{Z}^d,
\end{split}
\end{equation}
where $g(\cdot,\cdot)$ denotes the Green function of simple random walk on $\mathbb{Z}^d$, cf. \eqref{GreenFunction}. For arbitrary $h\geq 0$, we consider the ``two-sided'' level set
\begin{equation} \label{levelsets}
\mathcal{L}^{\geq h} = \{ x\in\mathbb{Z}^d \; ;\; | \varphi_x| \geq h \}.
\end{equation}
Introducing the critical parameter 
\begin{equation} \label{h_*}
\mathfrak{h}_* = \inf \big\{ h\geq 0 \; ; \; P^G[0\stackrel{\mathcal{L}^{\geq h}}{\longleftrightarrow} \infty ] = 0 \big\},
\end{equation}
we show in Theorem \ref{T:GFF_NOPERC} that
\begin{equation} \label{RESULT: h_*}
 \mathfrak{h}_* < \infty, \qquad \text{for all $d \geq 3$},
\end{equation}
and, similarly to \eqref{RESULT:conn_function}, that the connectivity function of $\mathcal{L}^{\geq h}$ has stretched exponential decay for sufficiently large $h$, see \eqref{T:GFF_NOPERC3}. This strengthens the result $(0.5)$ of \cite{RS} (see also \cite{BLM}), which states that $h_*$, the critical level for percolation of the (one-sided) level sets $\{ x \in \mathbb{Z}^d \; ; \; \varphi_x \geq h\}$, $h \in \mathbb{R}$, is finite for all $d \geq 3$. Moreover, it follows from Theorem 3.3 of \cite{RS} that $\mathfrak{h}_*$ is strictly positive in large dimensions, see Remark \ref{R:h_*positive}, 2) below. It was already known from Theorem 7 in \cite{G} (see also p. 281 therein) that there is no directed percolation inside $\mathcal{L}^{\geq h}$ when $h$ is sufficiently large, for all $d\geq 4$. Finally, let us mention that our results concerning $\mathcal{L}^{\geq h}$ might be helpful for investigating certain random conductance models on $\mathbb{Z}^d$, in the spirit of \cite{BD}, with nearest-neighbor conductances involving the Gaussian free field; see also \cite{DSZ, DS} for further motivation.

\bigskip

We now comment on the proofs. In order to establish the uniqueness result \eqref{RESULT:UNIQUENESS}, cf. Corollary \ref{C:UNIQUENESS} below, we invoke a classical theorem of Burton and Keane (see \cite{BK}, Theorem \nolinebreak 2) after showing in Theorem \ref{T:FE} that for all $u,\alpha>0$, the translation invariant law $Q_{u,\alpha}$ of $(1\{ x \in \mathcal{I}^{u,\alpha} \})_{x\in \mathbb{Z}^d}$ under $\mathbb{P}$, see \eqref{Q_u,alpha} and Lemma \ref{trans_inv}, has the so-called finite energy property, i.e.  
\begin{equation} \label{RESULT:FINITE_ENERGY}
0< Q_{u,\alpha}(Y_0 =1 | \sigma( Y_x, \; x \neq 0))<1, \text{ $Q_{u,\alpha}$-a.s. for all $u>0$, $\alpha >0$ and $d \geq 3$},
\end{equation}
where $Y_x$, $x \in \mathbb{Z}^d$, refer to the canonical coordinates on $\{0,1 \}^{\mathbb{Z}^d}$. This differs markedly from the case $\alpha = 0$, since the law of random interlacement at any level $u\geq 0$ fails to fulfill the finite-energy condition, see \cite{S1}, Remark 2.2, 3). The proof of the lower bound in \eqref{RESULT:FINITE_ENERGY} involves a delicate \textit{local} ``surgery operation'' on paths, which roughly consists of sending a ``furtive'' trajectory to $x$ which spends enough time there to ensure that $L_{x,u} > \alpha$ without spoiling a given configuration outside of $x$. The proof of the upper bound essentially requires us to prevent $x$ from being visited too much, which is a considerably simpler task.

The positivity of $\alpha_*(u)$ for $u>0$ in \eqref{RESULT: alpha_*}, cf. Theorem \ref{T:ALPHA_*PERC} below, is shown as follows. First, we introduce new occupation variables on $\mathbb{Z}^d$, whereby a site $x$ is ``occupied'' if and only if $x \in \mathcal{I}^{u,0}$ \textit{and} the first-passage holding time at $x$ of the trajectory in the interlacement cloud with smallest label ($ \leq u$) passing through $x$ exceeds $\alpha$ (see \eqref{T:U_*perc_pf1} for a precise definition). In particular, this implies that $L_{x,u}> \alpha$ whenever $x$ is ``occupied,'' hence $Q_{u,\alpha}$ dominates the (joint) law of these new occupation variables. Loosely speaking, we then prove that, conditionally on $\mathcal{I}^{u,0}$, these new variables define an independent Bernoulli percolation on the discrete interlacement set $\mathcal{I}^{u,0}$ with a suitable success parameter $p(\alpha)$ satisfying $ \lim_{\alpha \to 0} p(\alpha) = 1$. This enables us to use some recent results of \cite{RaSa} to infer that the set of occupied vertices has an infinite cluster if $p(\alpha)$ is sufficiently close to 1 (i.e. if $\alpha$ is small enough).

The proofs of the finiteness of $\alpha_*(u)$ and $u_*(\alpha)$ in \eqref{RESULT: alpha_*} and \eqref{RESULT: u_*}, see Theorems \ref{T:ALPHA_*} and \ref{T:U_*}, respectively, both rely on the aforementioned isomorphism theorem (see \cite{S2}, Theorem 0.1), which states that
\begin{equation} \label{isom_THM}
\begin{split}
&\big( L_{x,u}+ \frac{1}{2}\varphi_x^2 \big)_{x \in \mathbb{Z}^d}, \text{ under $\mathbb{P}  \otimes P^G$, has the} \\
&\text{same law as}\big( \frac{1}{2}(\varphi_x+\sqrt{2u})^2 \big)_{x \in \mathbb{Z}^d}, \text{ under $P^G$}.
\end{split}
\end{equation}
We focus on the claim $\alpha_*(u)< \infty$ first. Thus, we consider $\mathcal{I}^{u,\alpha}$ for fixed $u \geq 0$, as $\alpha$ becomes large. By \eqref{isom_THM}, $Q_{u,\alpha}$ (the law of $(1\{ L_{x,u} > \alpha \})_{x\in \mathbb{Z}^d}$ under $\mathbb{P}$) is dominated by the law of $\big(1\{ (\varphi_x+\sqrt{2u})^2/2 > \alpha \}\big)_{x\in \mathbb{Z}^d}$ under $P^G$. Hence, intuitively, if $L_{x,u}> \alpha$, then $|\varphi_x + \sqrt{2u}| > \sqrt{2 \alpha}$, i.e. $|\varphi_x|$ has to be large (since $\alpha$ is). This heuristic reasoning suggests that the asserted finiteness of $\alpha_*(u)$ is in fact a corollary of \eqref{RESULT: h_*}. 

The proof of \eqref{RESULT: u_*} is somewhat more involved, but has a similar flavor. Suppose that $x \in \mathcal{V}^{u,\alpha}$, i.e. $L_{x,u} \leq \alpha$, for some fixed $\alpha >0$. From the ``equality''  $ L_{x,u}+ \frac{1}{2}\varphi_x^2  \text{ ``$=$'' } \frac{1}{2}(\widetilde{\varphi}_x+\sqrt{2u})^2$, we deduce that, as $u \to \infty$, either $\widetilde{\varphi}$ is very negative (as to counteract the effect of $\sqrt{2u}$), or $|\varphi_x|$ must be large (since $L_{x,u}$ stays bounded), and both are rather unlikely by virtue of \eqref{RESULT: h_*}. This indicates that a subcritical phase for $\mathcal{V}^{u,\alpha}$ should emerge when $u$ becomes sufficiently large, cf. also Fig. \ref{phasediagram}. Note that the preceding discussion also accounts for the incongruent roles of $u$ and $\alpha$ in the definitions \eqref{alpha_*} and \eqref{u_*}, which is due to the way we apply \eqref{isom_THM}.

Finally, the result \eqref{RESULT: h_*} concerning ``two-sided'' level-set percolation for the Gaussian free field, see Theorem \ref{T:GFF_NOPERC} below, is shown using some of the tools developed in \cite{RS} for the analysis of (one-sided) level-set percolation (i.e. percolation of the sets $\{ x\in \mathbb{Z}^d \: ; \: \varphi_x \geq h \},$ $h \in \mathbb{R}$; the corresponding critical parameter is denoted by $h_*$, see (0.4) in \cite{RS}). In particular, it involves a renormalization scheme akin to the one introduced in Section 2 of \cite{RS} (see also \cite{SS}, \cite{S3}), and crucially depends on the decoupling inequality (Proposition 2.2 in \cite{RS}; see also Proposition \ref{P:DEC_INEQ} below) derived therein. However, we cannot simply follow the strategy used to prove finiteness of $h_* $ in \cite{RS} in order to establish \eqref{RESULT: h_*}, because the relevant crossing events $\{B(0,L) \stackrel{\mathcal{L}^{\geq h}}{\longleftrightarrow} S(0,2L) \}$, with $L \geq 1$,  $h \geq0$, which refer to the existence of a (nearest-neighbor) path in $\mathcal{L}^{\geq h}$ connecting $B(0,L)$, the closed ball of radius $L$ around the origin in the $\ell^\infty$-norm, to $S(0, 2L)$, the $\ell^\infty$-sphere of radius $2L$ around $0$, are neither increasing nor decreasing ``in $\varphi$,'' so Proposition 2.2 of \cite{RS} does not apply directly. To overcome this difficulty, we proceed as follows. First, we partition $\mathbb{Z}^d$ into disjoint boxes of equal side length $L_0$, for some $L_0 \geq 1$, and call any such box $h$-\textit{bad} if $|\varphi_x| >h$ for at least one site $x$ inside the box (this is quite crude but suffices for our purpose). Next, we consider the quantities
\begin{equation*}
\begin{split}
 q_n^+(h) \ \text{``$=$'' }  \ P^G [&\text{the box $B(0,L_n)$ contains $2^n$ ``well-separated'' boxes of side} \\
&\text{length $L_0$, each of which contains at least one site $x$ with $\varphi_x \geq h$}],
\end{split}
\end{equation*}
for $h \geq 0$, where $(L_n)_{n\geq 0}$ is a geometrically increasing sequence of length scales, see \eqref{L_n}. We define $q_n^-(h)$ similarly, with the last condition replaced by the requirement that $\varphi_x \leq - h$ for some site $x$ in the given box of side length $L_0$. Using the results of \cite{RS}, we show that $\lim_{n\to \infty} q_n^{\pm}(h) = 0$ for some careful choice of the parameters $h$ and $L_n$, $n \geq 0$. Together with a geometric argument in the spirit of Lemma 6 in \cite{RaSa}, see Lemmas \nolinebreak \ref{L:CRAMGEN} and \ref{L:CRAM} below, this yields that large connected components of $h$-bad blocks have small probability. The claim \eqref{RESULT: h_*} then easily follows, since the existence of an infinite cluster in $\mathcal{L}^{\geq h}$ implies the existence of an infinite connected component of $h$-bad boxes.

\medskip

We conclude this introduction by describing the organization of this article. In Section \nolinebreak \ref{NOTATION}, we introduce some basic notation, briefly review the definition of continuous-time random interlacements, and collect a few auxiliary properties of the measures $Q_{u,\alpha}$, $u,\alpha \geq 0$ (see above \eqref{RESULT:FINITE_ENERGY}). Sections \ref{FE} and \ref{I_PERC} are devoted to the uniqueness result \eqref{RESULT:UNIQUENESS}  (see Corollary \ref{C:UNIQUENESS}) and to the positivity of $\alpha_*$ in \eqref{RESULT: alpha_*} (see Theorem \ref{T:ALPHA_*PERC}), respectively. All results concerning absence of percolation are contained in Sections \ref{GFF_NOPERC} and \ref{RI_NOPERC}. Section \nolinebreak \ref{GFF_NOPERC} deals solely with the Gaussian free field, and  \eqref{RESULT: h_*} is shown in Theorem \ref{T:GFF_NOPERC}, after a suitable renormalization scheme has been set up. Section \ref{RI_NOPERC} addresses the question of absence of percolation for the sets $\mathcal{I}^{u,\alpha}$ and $\mathcal{V}^{u,\alpha}$. The main results \eqref{RESULT: alpha_*} and \eqref{RESULT: u_*} are established in Theorems \ref{T:ALPHA_*} and \ref{T:U_*}, respectively, along with the asserted decay behavior of the corresponding connectivity functions, see \eqref{RESULT:conn_function}.

\medskip

One final remark concerning our convention regarding constants: we denote by $c,c',\dots$ positive constants with values changing from place to place. Numbered constants $c_0,c_1,\dots$ are defined at the place they first occur within the text and remain fixed from then on until the end of the article. The dependence of constants (and other quantities) on the dimension $d$ of the lattice will be kept implicit throughout.

\section{Notation and useful results} \label{NOTATION}

In this section, we introduce some basic notation to be used in the sequel, recall the definition of continuous-time random interlacement on $\mathbb{Z}^d$, $d \geq 3$, and collect some auxiliary properties of the law $Q_{u,\alpha}$ of $\mathcal{I}^{u,\alpha}$ (see \eqref{Q_u,alpha} below), for $u,\alpha \geq 0$.

We denote by $\mathbb{N}= \{0,1,2,\dots\}$ the set of natural numbers, by $\mathbb{N}_* = \mathbb{N} \setminus \{ 0 \}$ the set of positive integers and by $\mathbb{Z}=\{ \dots ,-1,0,1,\dots \}$ the set of integers. We write $\mathbb{R}$ for the set of real numbers, $\mathbb{R}_+$ for the set of non-negative real numbers (this includes $0$), abbreviate $r \wedge s = \min \{r,s \}$ and $r \lor s = \max\{ r,s\}$ for any two numbers $r,s \in \mathbb{R}$, and $[r]$ for the integer part of $r$, for any $r \geq 0$. We consider the lattice $\mathbb{Z}^d$, and (tacitly) assume throughout that $d \geq 3$. On $\mathbb{Z}^d$, we respectively denote by $\vert \cdot \vert$ and $\vert \cdot \vert_\infty$ the Euclidean and $\ell^\infty$-norms. For any $x \in \mathbb{Z}^d$ and $r \geq 0$, we let $B(x,r) = \{ y \in \mathbb{Z}^d  \; ; \, \vert y-x \vert_\infty \leq r \}$ and $S(x,r) = \{ y \in \mathbb{Z}^d \; ; \; \vert y-x \vert_\infty = r \}$ stand for the $\ell^\infty$-ball and $\ell^\infty$-sphere of radius $r$ centered at $x$. Given $K$ and $U$ subsets of $\mathbb{Z}^d$, $K^c = \mathbb{Z}^d \setminus K$ stands 
for the complement of $K$ in $\mathbb{Z}^d$, $\vert K \vert$ for the cardinality of $K$, $K \subset \subset \mathbb{Z}^d $ means that $K \subset \mathbb{Z}^d$ and $\vert K \vert< \infty$, and $d(K,U) = \inf \{ \vert x-y \vert_\infty \; ; \; x \in K , y \in U\}$ denotes the $\ell^\infty$-distance between $K$ and $U$. If $K= \{ x\}$, we simply write $d(x, U)$. Finally, for any $K \subset \mathbb{Z}^d$, we define the inner boundary of $K$ to be the set $\partial^i K = \{ x \in K \; ; \; \exists y \in K^c, \vert y-x \vert =1 \}$, and the outer boundary of $K$ as $\partial K = \partial^i (K^c)$. 

We endow $\mathbb{Z}^d$ with the nearest-neighbor graph structure, i.e. the edge-set consists of all pairs of sites $\{ x ,y \}$, $x,y \in \mathbb{Z}^d$, such that $|x-y|=1$. We consider the spaces $\widehat{W}_+$, $\widehat{W}$ of infinite, respectively doubly infinite, $\mathbb{Z}^d \times (0, \infty)$-valued sequences, such that the $\mathbb{Z}^d$-valued sequences form an infinite, respectively doubly-infinite nearest-neighbor trajectory spending finite time in any finite subset of $\mathbb{Z}^d$, and such that the $(0, \infty)$-valued components have an infinite sum in the case of $\widehat{W}_+$, and infinite ``forward'' and ``backward'' sums, when restricted to positive and negative indices, in the case of $\widehat{W}$. We write $X_n$, $\sigma_n$ and $\theta_n$ with $n \geq 0$, respectively $n \in \mathbb{Z}$, for the $\mathbb{Z}^d$- and $(0,\infty)$-valued canonical coordinates and canonical shifts on $\widehat{W}_+$, respectively $\widehat{W}$, and denote by $\widehat{\mathcal{W}}_+$ and $\widehat{\mathcal{W}}$ the corresponding canonical $\sigma$-algebras. For $\widehat{w} \in \widehat{W}$, we will often abbreviate $X(\widehat{w})=(X_n(\widehat{w}))_{n \in \mathbb{Z}}$ and $\sigma(\widehat{w})=(\sigma_n(\widehat{w}))_{n \in \mathbb{Z}}$.

We let $P_x$, $x \in \mathbb{Z}^d$, be the law on $\widehat{W}_+$ under which $(X_n)_{n\geq 0}$ is distributed as simple random walk starting at $x$ and $\sigma_n$, $n\geq 0$, are i.i.d. exponential variables with parameter $1$, independent of the $X_n$, $n\geq 0$. Since $d \geq 3$, the walk is transient, so $\widehat{W}_+$ has full measure under $P_x$. We denote by $E_x$ the corresponding expectation. Moreover, for any measure $\rho$ on $\mathbb{Z}^d$, we write $P_\rho$ for the measure $\sum_{x\in \mathbb{Z}^d} \rho(x)P_x$, and $E_{\rho}$ for the corresponding expectation. We denote by $g(\cdot,\cdot)$ the Green function of simple random walk, i.e.
\begin{equation}\label{GreenFunction}
g(x,y) = \sum_{n \geq 0} P_x [X_n = y], \qquad \text{for } x,y \in \mathbb{Z}^d,
\end{equation}
which is finite (since $d \geq3$) and symmetric. Moreover, $g(x,y)= g(x-y,0) \stackrel{\text{def.}}{=} g(x-y)$ due to translation invariance. For $U \subseteq \mathbb{Z}^d$ and $\widehat{w}\in \widehat{W}_+$, we write $H_U(\widehat{w})$, $\widetilde{H}_U(\widehat{w})$ and $T_U(\widehat{w})$ for the entrance time in $ U $ , the hitting time of $U$ and the exit time from $U$ for the trajectory $\widehat{w}$, i.e.
\begin{equation}\label{stop_times}
\text{$H_U(\widehat{w}) = \inf \{ n\geq 0 \; ; \; X_n(\widehat{w}) \in U \}$, $\widetilde{H}_U(\widehat{w}) = \inf \{ n\geq 1 \; ; \; X_n(\widehat{w}) \in U \}$, $T_U = H_{U^c}$.}
\end{equation}
We define $H_U(\widehat{w})$ and $T_U(\widehat{w})$ in a similar fashion when $\widehat{w} \in \widehat{W}$, with ``$n\in \mathbb{Z}$'' replacing ``$n\geq 0$'' in \eqref{stop_times}, and simply write $H_x$, $\widetilde{H}_x$, $T_x$ when $U=\{x\}$. We also introduce $H_0^n(\widehat{w})$, $n \geq 1$, the successive visit times to $0$, for $\widehat{w} \in \widehat{W}_+$ or $\widehat{W}$, i.e. 
\begin{equation} \label{H_0^n}
H_0^1 = H_0, \qquad H_0^{n+1}= \left\{
\begin{array}{ll}
H_0^{n} + \widetilde{H}_0 \circ \theta_{H_0^{n}}, & \text{if } H_0^{n} < \infty \\
\infty, & \text{if } H_0^{n} = \infty
\end{array} \right.
, \quad \text{for $n \geq 1.$}
\end{equation}
A straightforward application of the strong Markov property at time $H_0^{n}$ (together with an inductive argument) yields
\begin{equation} \label{visit_proba} 
P_0[H_0^{n+1} < \infty] = \rho^{n}, \quad \text{for all $n\geq 0$, where $\rho = P_0[\widetilde{H}_0 < \infty]$}.
\end{equation}
Next, we recall some basic notions from potential theory. Given some subset $K \subset \subset \mathbb{Z}^d$, we write 
\begin{equation} \label{equ_meas}
e_{K} (x) = P_x [\widetilde{H}_K = \infty], \qquad x \in K,
\end{equation}
for the equilibrium measure of $K$, and
\begin{equation}\label{cap}
\text{cap}(K) = \sum_{x \in K} e_{K} (x)
\end{equation}
for its capacity. We further denote by $\tilde{e}_K(\cdot) = e_K(\cdot) /\text{cap}(K)$ the normalized equilibrium measure. If $K=B(0,L)$ is a box of side length $L\geq 1$, one has (see for example Section 1 in \cite{SS} for a derivation)
\begin{equation} \label{cap_box}
\text{cap}(B(0,L)) \geq cL^{d-2},  \text{ for } L\geq 1.
\end{equation}

We now turn to the description of continuous-time random interlacements on $\mathbb{Z}^d$, $d \geq 3$. We define $\widehat{W}^*$ as the space $\widehat{W}$ modulo time-shift, i.e. $\widehat{W}^*= \widehat{W} / \sim $, where for any $\widehat{w},\widehat{w}' \in \widehat{W}$, $\widehat{w} \sim \widehat{w}'$ if and only if $\widehat{w}(\cdot)=\widehat{w}'(\cdot + k)$ for some $k \in \mathbb{Z}$. We denote by $\pi^*:\widehat{W} \longrightarrow \widehat{W}^*$ the corresponding canonical projection, and endow $\widehat{W}^*$ with the largest $\sigma$-algebra $\mathcal{\widehat{W}}^*$ that renders $\pi^*: (\widehat{W},\mathcal{\widehat{W}}) \longrightarrow (\widehat{W}^*, \widehat{\mathcal{W}}^*)$ measurable. For any $K \subset \subset \mathbb{Z}^d$, we let $\widehat{W}_K$ stand for the subset of $\widehat{W}$ consisting of all those doubly infinite sequences for which the $\mathbb{Z}^d$-valued trajectory enters $K$, write $\widehat{W}_K^0= \widehat{W}_K \cap \{ H_K =0 \}$, and define $\widehat{W}_K^* = \pi^*(\widehat{W}_K) \ (= \pi^*(\widehat{W}_K^0))$. If $K=\{x\}$, we simply write $\widehat{W}_x$ and $\widehat{W}_x^*$.

The continuous-time interlacement point process on $\mathbb{Z}^d$ is a Poisson point process on the space $\widehat{W}^* \times \mathbb{R}_+$. Its intensity measure is of the form $\hat{\nu}(d\widehat{w}^*)du$, where $\hat{\nu}$ is a $\sigma$-finite measure on $\widehat{W}^*$ defined as follows. For all $K\subset\subset \mathbb{Z}^d$, the restriction of $\hat{\nu}$ to $W_K^*$ is the image under $\pi^*$ of the finite measure $\widehat{Q}_K$ on $\widehat{W}$  specified by
\begin{equation} \label{intensity}
\begin{split}
\begin{array}{ll}
\text{i)} &  \widehat{Q}_K(X_0 = x) = e_K(x), \text{ for $x \in \mathbb{Z}^d$,} \\
\text{ii)} & \text{when $e_K(x) >0$, conditionally on $\{ X_0 = x \}$, $(X_n)_{n \geq 0}$, $(X_{-n})_{n\geq 0}$ and} \\
& \text{$(\sigma_n)_{n \in \mathbb{Z}}$ are independent, and respectively distributed as simple random} \\
& \text{walk starting at $x$, simple random walk starting at $x$ conditioned on not} \\
& \text{returning to $K$, and as a doubly infinite sequence of independent}\\
& \text{exponential variables with parameter one.} \\
\end{array} 
\end{split}
\end{equation}
One verifies as in the case of discrete-time random interlacements (cf. \cite{S1}, Theorem 1.1) that \eqref{intensity} defines a unique $\sigma$-finite measure $\hat{\nu}$ on $\widehat{W}^*$. In certain instances, it will be advantageous to view $\widehat{w} \in \widehat{W}$ as $(X(\widehat{w}), \sigma(\widehat{w})) \in W \times T$, where $W$ is the space of doubly infinite nearest-neighbor trajectories in $\mathbb{Z}^d$ spending finite time in finite subsets of $\mathbb{Z}^d$ (this is consistent with the notation from \cite{S1}) and $T$ is the space of doubly infinite $(0,\infty)$-valued sequences with infinite forward and backward sums. Accordingly, $\widehat{Q}_K$ becomes the product measure $Q_K \otimes P_T$, with $Q_K$ as defined in $(1.24)$ of \cite{S1} and $P_T$ a probability under which the elements of $T$ are distributed as doubly infinite sequences of independent exponential variables with parameter one. 

The continuous-time interlacement point process is then constructed on a probability space $(\Omega, \mathcal{A}, \mathbb{P})$ similar to $(1.16)$ of \cite{S1}, with $\Omega$ a space of point measures on $\widehat{W}^* \times \mathbb{R}_+$ and $\omega = \sum_{i\geq 0}\delta_{(\widehat{w}_i^*,u_i)}$ denoting a generic element of $\Omega$. The \textit{interlacement at level} $u\geq0$, denoted by $\mathcal{I}^u$, is defined as the (random) subset of $\mathbb{Z}^d$ consisting of all sites visited by at least one of the trajectories in the cloud $\omega$ with label at most $u$, i.e.
\begin{equation}\label{I^u}
\mathcal{I}^u (\omega)= \bigcup_{u_i \leq u} \text{range}(X(\widehat{w}_i)), \quad \text{if } \omega = \sum_{i\geq 0} \delta_{(\widehat{w}_i^*,u_i)},
\end{equation}
where $\widehat{w}_i$ is an arbitrary element in the equivalence class $\widehat{w}_i^*$, and $\text{range}(X(\widehat{w}_i))=\{X_n(\widehat{w}_i)\; ; n\in \mathbb{Z}\}$. Its complement $\mathcal{V}^u(\omega) = \mathbb{Z}^d\setminus \mathcal{I}^u(\omega)$ is called the \textit{vacant set at level} $u$. Note that these definitions do not depend on the exponential holding times $\sigma_n(\widehat{w}_i)$, $n \in \mathbb{Z}$, $i \geq 0$, hence the set $\mathcal{I}^u$ in  \eqref{I^u} corresponds to the (discrete-time) random interlacement at level $u \geq 0$ introduced in $(0.7)$ of \cite{S1}. For $K\subset\subset \mathbb{Z}^d$, we introduce the random point process (on $\widehat{W}_+\times\mathbb{R}_+$)
\begin{equation}\label{mu_K_def}
\hat{\mu}_K(\omega)=\sum_{i\geq 0} \delta_{(s_K(\widehat{w}_i^*)_+,u_i)} 1\{\widehat{w}_i^* \in \widehat{W}_K^*\}, \quad \text{if $\omega = \sum_{i\geq 0}\delta_{(\widehat{w}_i^*,u_i)}$},
\end{equation}
where, given some $\widehat{w}^*\in\widehat{W}_K^*$, $s_K(\widehat{w}^*)$ stands for the unique element $\widehat{w}^0$ in $\widehat{W}_K^0$ satisfying $\pi^*(\widehat{w}^0)= \widehat{w}^*$, and for arbitrary  $\widehat{w} \in \widehat{W}$, $\widehat{w}_+$ denotes the element of $\widehat{W}_+$ obtained by restricting $\widehat{w}$ to $\mathbb{N}$ (the ``forward'' trajectory). One can then show (similarly to the proof of $(1.45)$ in Proposition 1.3 of \cite{S1}) that
\begin{equation}\label{mu_K_PPP} 
\begin{split}
&\text{under $\mathbb{P}$, $\hat{\mu}_K(\omega)$ has the law of the Poisson} \\
&\text{point process on $\widehat{W}_+ \times \mathbb{R}_+$ with intensity $P_{e_K}(d\widehat{w})du$.} 
\end{split}
\end{equation} 
Given $K\subset\subset \mathbb{Z}^d$ and $u\geq 0$, we will also consider the random point process
\begin{equation} \label{mu_K,u}
\hat{\mu}_{K,u}(\omega)(d\widehat{w}) = \hat{\mu}_K(\omega)(d\widehat{w}\times [0,u])
\end{equation} 
on the space $\widehat{W}_+$. In words, $\hat{\mu}_{K,u}$ is obtained from $\hat{\mu}_K$ (as defined in \eqref{mu_K_def}) by keeping only the trajectories with label at most $u$ and forgetting their label. Similarly to \eqref{mu_K_PPP}, one deduces that the law of $\hat{\mu}_{K,u}(\omega)$, under $\mathbb{P}$, is that of the Poisson point process on $\widehat{W}_+$ with intensity $uP_{e_K}(d\widehat{w})$. In particular, this measure is finite (its total mass is $u \cdot \text{cap}(K)$), hence 
\begin{equation} \label{mu_K,u_constr}
\hat{\mu}_{K,u} \stackrel{\text{law}}{=} \sum_{i=1}^{N_{K,u}} \delta_{Z_i},
\end{equation}
where $N_{K,u} \sim \text{Poi}(u\text{cap}(K))$ (the number of trajectories with label at most $u$ entering $K$) and the $Z_i$ are i.i.d. $\widehat{W}_+$-valued random elements with law $P_{\tilde{e}_K}$ (see below \eqref{equ_meas} for the definition of $\tilde{e}_K(\cdot)$), independent of $N_{K,u}$. Occasionally, we will also consider
\begin{equation} \label{omega_K,u}
 \omega_{K,u} = \sum_{i \geq 0} \delta_{(\widehat{w}_i^*,u_i)} 1\{ \widehat{w}_i^* \in \widehat{W}_K^*, \; u_i \leq u \}, \quad \text{if } \omega = \sum_{i \geq 0} \delta_{(\widehat{w}_i^*,u_i)},
\end{equation}
which is a Poisson point process with finite intensity measure $1_{\widehat{W}_K^* \times [0,u]}\cdot \hat{\nu}(d\widehat{w}^*)du$.

In this article, we are primarily interested in the field $(L_{x,u})_{x\in \mathbb{Z}^d}$ of occupation times (at level $u \geq 0$), defined as
\begin{equation}\label{L}
\begin{split}
&\text{$L_{x,u}(\omega)= \sum_{i\geq0} \sum_{n\in \mathbb{Z}} \sigma_n(\widehat{w}_i)1\{ X_n(\widehat{w}_i)=x, u_i \leq u\}$, for $x\in \mathbb{Z}^d$, $u\geq 0$,}  \\
&\text{where $\omega = \sum_{i\geq 0}\delta_{(\widehat{w}_i^*,u_i)}\in \Omega$, and $\pi^*(\widehat{w}_i)= \widehat{w}_i^*,$ for all $i\geq0$}
\end{split}
\end{equation}
(i.e., $\widehat{w}_i$ is an arbitrary element in the equivalence class $\widehat{w}_i^*$). From \eqref{I^u,alpha}, \eqref{I^u} and \eqref{L}, we immediately infer that 
\begin{equation} \label{I^u,0}
\mathcal{I}^u = \mathcal{I}^{u,0} = \{x\in \mathbb{Z}^d\; ; L_{x,u} >0\}, \ \text{ and } \mathcal{V}^u = \mathcal{V}^{u,0}, \text{ for all $u \geq 0$}. 
\end{equation}
Thus, in particular, the random sets defined in \eqref{I^u,alpha} satisfy $\mathcal{I}^{u,\alpha}\subseteq \mathcal{I}^{u}$ and $\mathcal{V}^{u,\alpha} \supseteq \mathcal{V}^{u}$, for all $\alpha \geq 0$ and $u \geq 0$. We endow the space $ \{0,1\}^{\mathbb{Z}^d}$ with its canonical $\sigma$-algebra $\mathcal{Y}$, denote by $Y_x$, $x\in \mathbb{Z}^d$, the corresponding canonical coordinates, define the (measurable) map
\begin{equation}\label{psi_u,alpha}
\psi_{u,\alpha}: \Omega \longrightarrow \{0,1\}^{\mathbb{Z}^d}, \quad \omega \longmapsto \big( 1\{L_{x,u}(\omega)> \alpha\}\big)_{x\in\mathbb{Z}^d},
\end{equation}
and consider the image measure (on $\{0,1 \}^{\mathbb{Z}^d}$) of $\mathbb{P}$ under $\psi_{u,\alpha}$,
\begin{equation}\label{Q_u,alpha}
Q_{u,\alpha} = \psi_{u,\alpha} \circ \mathbb{P}, \quad \text{for $u,\alpha \geq 0$}.
\end{equation}

\begin{lemma} \label{trans_inv} Let $t_x$, $x\in \mathbb{Z}^d$, denote the canonical shift operators on $ \{0,1\}^{\mathbb{Z}^d}$. For all $u,\alpha \geq 0$, $t_x$, $x\in \mathbb{Z}^d$, are measure-preserving transformations on $(\{0,1\}^{\mathbb{Z}^d},\mathcal{Y}, Q_{u,\alpha})$ which are ergodic.
\end{lemma}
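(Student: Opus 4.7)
I would deduce both claims by lifting the corresponding properties of $\mathbb{P}$ via the factor map $\psi_{u,\alpha}$. Introduce the natural translation action $(\tau_x)_{x\in\mathbb{Z}^d}$ on $\Omega$ obtained by shifting the $\mathbb{Z}^d$-component of every trajectory $\widehat{w}$ in the support of $\omega$, while leaving the holding times $\sigma_n$ fixed. Since $e_{K+x}(\cdot+x)=e_K(\cdot)$ and the simple random walk law is translation invariant, \eqref{intensity} shows that $\tau_x$ transports $\widehat{Q}_K$ to $\widehat{Q}_{K+x}$; hence $\hat{\nu}$ is $\tau_x$-invariant, and so is $\mathbb{P}$. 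From \eqref{L} one reads off an identity of the form $L_{y,u}(\tau_x\omega)=L_{y\pm x,u}(\omega)$, whence $\psi_{u,\alpha}\circ\tau_x = t_x\circ\psi_{u,\alpha}$ with the obvious sign convention, and the desired invariance $t_x\circ Q_{u,\alpha}=Q_{u,\alpha}$ follows by push-forward.

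For ergodicity it is enough to prove the stronger statement that $\mathbb{P}$ is $(\tau_x)$-mixing, since mixing is inherited by measurable factors. A $\pi$-system generating $\mathcal{A}$ is given by cylinder events in $\sigma(\omega_{K,v})$ (see \eqref{omega_K,u}) for some $K\subset\subset\mathbb{Z}^d$ and $v\geq0$. Given $A\in\sigma(\omega_{K_1,v})$ and $B\in\sigma(\omega_{K_2,v})$, and $x$ with $|x|_\infty$ large enough to separate $K_1$ from the appropriate translate of $K_2$, I would split $\omega_{K_1\cup(K_2\mp x),v}$ into three independent Poisson processes according to whether a trajectory enters only $K_1$, only $K_2\mp x$, or both. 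The third process has expected total mass bounded by
\begin{equation*}
v\sum_{y\in K_1} e_{K_1}(y)\, P_y[H_{K_2\mp x}<\infty] \ \leq\ c(K_1,K_2,v)\,|x|_\infty^{-(d-2)},
\end{equation*}
using $P_y[H_M<\infty]=\sum_z g(y,z)e_M(z)$ together with the decay $g(y,z)\leq c|y-z|^{-(d-2)}$; hence with probability tending to $1$ no trajectory hits both sets. On this event $\omega_{K_1,v}$ coincides with the first Poisson process and $\tau_x^{-1}(B)$ is determined by the second, yielding independence in the limit and $\mathbb{P}[A\cap\tau_x^{-1}B]\to\mathbb{P}[A]\mathbb{P}[B]$.

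The only delicate point is that the ``no shared trajectory'' event is not itself independent of $A$ and $B$; this is handled by noting that the unconditional joint law of the three component Poisson processes is already a product, so the discrepancy is controlled by the vanishing probability of the bad event. An alternative, arguably cleaner, route would sidestep the decoupling entirely by adapting the zero-one argument for $(\tau_x)$-invariant events in Theorem 2.1 of \cite{S1}: the exponential holding times $\sigma_n$ are i.i.d.\ and independent of the underlying spatial trajectories, so the ergodicity proof in the discrete-time setting carries over essentially verbatim to the continuous-time enrichment, giving ergodicity of $\mathbb{P}$ under $(\tau_x)$ and hence, via $\psi_{u,\alpha}$, of $Q_{u,\alpha}$ under $(t_x)$.
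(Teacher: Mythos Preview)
Your proposal is correct and follows essentially the same route as the paper. Translation invariance is obtained identically, by intertwining $\tau_x$ on $\Omega$ with $t_x$ via $\psi_{u,\alpha}$ and using $\tau_x$-invariance of $\hat\nu$ (hence of $\mathbb{P}$).

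For ergodicity there is only a minor organizational difference. The paper proves mixing of $Q_{u,\alpha}$ directly: it reduces to cylinder events in the $Y$-coordinates over a finite $K$, observes that $(L_{x,u})_{x\in K}$ is a function of $\hat\mu_{K,u}$, and then appeals to the proof of (2.7) in \cite{S1} for the statement $\mathbb{E}[F(\hat\mu_{K,u})\cdot(F(\hat\mu_{K,u})\circ\tau_x)]\to\mathbb{E}[F(\hat\mu_{K,u})]^2$. You instead prove mixing of $\mathbb{P}$ itself and then pass to the factor $Q_{u,\alpha}$, spelling out the decoupling argument behind (2.7) of \cite{S1} (the three-way split of $\omega_{K_1\cup(K_2\mp x),v}$ and the $|x|_\infty^{-(d-2)}$ bound on the mass of shared trajectories). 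Both arguments are the same at heart; your version is more self-contained, while the paper's is shorter by citation. Your ``alternative route'' via Theorem 2.1 of \cite{S1} is in fact precisely what the paper does.
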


\begin{proof}
For arbitrary $\widehat{w} \in \widehat{W}$ and $x \in \mathbb{Z}^d$, we define $\widehat{w} + x  \in \widehat{W}$ by $(\widehat{w} + x)(n)= (X_n(\widehat{w})+x, \sigma_n (\widehat{w}))$, for all $n \in \mathbb{Z}$, and write $\widehat{w}^* +x$ for $\pi^*(\widehat{w} + x)$. Given $\omega = \sum_{i\geq 0}\delta_{(\widehat{w}_i^*,u_i)}\in \Omega$ and $x\in \mathbb{Z}^d$, we let $\tau_x\omega = \sum_{i \geq 0} \delta_{(\widehat{w}_i^* - x, u_i)}$. As in the proof of $(1.28)$ and $(1.48)$ in \cite{S1}, one verifies that $\mathbb{P}$ is invariant under $\tau_x$, for any $x\in\mathbb{Z}^d$. Using \eqref{L}, one obtains that $L_{x+y,u}(\omega)= L_{y,u}(\tau_x \omega)$, for all $x,y\in\mathbb{Z}^d$, $u\geq 0$ and $\omega \in \Omega$. This yields
\begin{equation} \label{ttau}
t_x \circ \psi_{u,\alpha} = \psi_{u,\alpha} \circ \tau_x, \quad \text{for $x\in\mathbb{Z}^d$, $u \geq 0$, $\alpha \geq 0$,}
\end{equation}
hence
\begin{equation*}
t_x \circ Q_{u,\alpha}   =  (t_x \circ \psi_{u,\alpha}) \circ \mathbb{P} = ( \psi_{u,\alpha} \circ \tau_x) \circ \mathbb{P} = \psi_{u,\alpha} \circ ( \tau_x \circ \mathbb{P}) =  Q_{u,\alpha},
\end{equation*} 
where the last step follows by translation invariance of $\mathbb{P}$. The asserted ergodicity follows from the (stronger) mixing property
\begin{equation} \label{mixing}
\lim_{|x| \to \infty} Q_{u,\alpha}[A \cap t_x^{-1}(B)] = Q_{u,\alpha}[A ] \cdot Q_{u,\alpha}[B], \quad \text{for all $A,B \in \mathcal{Y}$ (and all $u,\alpha \geq 0$)}. 
\end{equation}
By approximation, it suffices to verify \eqref{mixing} for $A,B$ depending on the coordinates in some finite set $K\subset \subset \mathbb{Z}^d$ only. Moreover, by \eqref{mu_K,u} and \eqref{L}, the local time $L_{x,u}$, for any $x \in K$, only depends on $\omega$ ``through $\hat{\mu}_{K,u}$,'' i.e. we can write, for all $u \geq 0$ and $x \in K$,  $L_{x,u} = \sum_{i=0}^N \sum_{n \geq 0} \sigma_n(\widehat{w}_i)1\{ X_n(\widehat{w}_i)=x\}$, if  $\hat{\mu}_{K,u} = \sum_{i=0}^N \delta_{\widehat{w}_i}$ for some $N \geq 0$ and $\widehat{w}_i \in \widehat{W}_+$, $0 \leq i \leq  N$. From these observations, and in view of \eqref{ttau}, we conclude that \eqref{mixing} follows from
\begin{equation} \label{mixing2}
\lim_{|x| \to \infty} \mathbb{E}[F(\hat{\mu}_{K,u})\cdot (F(\hat{\mu}_{K,u}) \circ \tau_x)] = \mathbb{E}[F(\hat{\mu}_{K,u})]^2,
\end{equation}
for any $K \subset \subset \mathbb{Z}^d$ and $[0,1]$-valued measurable function $F$ on the set of finite point measures on $\widehat{W}_+$ (endowed with its canonical $\sigma$-field). The proof of \eqref{mixing2} is the same as that of $(2.7)$ in \cite{S1} (in particular, note that the presence of exponential holding times is inconsequential for this argument, which involves solely the \textit{spatial} part of the trajectories). This completes the proof of Lemma \ref{trans_inv}.
\end{proof}

\begin{remark} \label{R:0-1law} ($0-1$ laws)

\medskip
\noindent We consider the event $A= \{ \text{there exists an infinite cluster}\}$ $\in \mathcal{Y}$, which is translation invariant. By ergodicity, letting $\Psi^{\mathcal{I}}(u,\alpha)= Q_{u,\alpha}[A]= \mathbb{P}[\mathcal{I}^{u,\alpha} \text{ contains an infinite cluster}]$, one obtains the dichotomy
\begin{equation}\label{dichotomy}
\begin{split}
\Psi^{\mathcal{I}}(u,\alpha) = \left \{
\begin{array}{rl}
 0, & \text{ if  } \eta^{\mathcal{I}}(u,\alpha)=0 \\
 1, & \text{ if  } \eta^{\mathcal{I}}(u,\alpha)>0
\end{array}
\right.
\end{split}
\end{equation}
(see \eqref{eta^I} for the definition of $\eta^{\mathcal{I}}(u,\alpha)$). In particular, this implies $\Psi^{\mathcal{I}}(u,\alpha) = 1$ for all $u > 0$ and $0 \leq \alpha < \alpha_*(u)$ (recall \eqref{alpha_*} for the definition of $\alpha_*(u)$), and $\Psi^{\mathcal{I}}(u,\alpha) = 0$ for all $u \geq 0$ and $ \alpha > \alpha_*(u)$ (note that $\alpha_*(u) \in [0,\infty]$ at this point). 

The conclusions of Lemma \ref{trans_inv} and \eqref{mixing} continue to hold if one replaces $Q_{u,\alpha}$ by $\widetilde{Q}_{u,\alpha}$, the law of $(1\{ x \in \mathcal{V}^{u,\alpha} \})_{x \in \mathbb{Z}^d}$ under $\mathbb{P}$ (using the inversion map on $\{ 0,1\}^{\mathbb{Z}^d}$). On account of this, the analogue of \eqref{dichotomy} for $\Psi^{\mathcal{V}}(u,\alpha)= \widetilde{Q}_{u,\alpha}[A]$ (obtained by replacing $\mathcal{I}$ with $\mathcal{V}$ everywhere in \eqref{dichotomy}) holds as well. Recalling \eqref{u_*}, this gives $\Psi^{\mathcal{V}}(u,\alpha) = 1$ for all $\alpha \geq 0$ and $0 \leq u < u_*(\alpha)$, and $\Psi^{\mathcal{V}}(u,\alpha) = 0$ for all $\alpha \geq 0$ and $ u > u_*(\alpha)$ (and $u_*(\alpha)$ could be infinite at this point). \hfill $\square$
\end{remark}

\section{Finite energy property and uniqueness} \label{FE}

In this section, we establish in Theorem \ref{T:FE} that the measure $Q_{u,\alpha}$ has the so-called finite energy property, see \eqref{RESULT:FINITE_ENERGY}, for all $u >0$ and $\alpha>0$. One important consequence is the (almost sure) uniqueness of the infinite cluster of  $\mathcal{I}^{u, \alpha}$ in the supercritical regime, i.e., for all $u,\alpha > 0$ such that $\eta^{\mathcal{I}}(u,\alpha) >0$, see Corollary \ref{C:UNIQUENESS} below. Analogous conclusions hold for $\mathcal{V}^{u, \alpha}$, see Remark \ref{FE_V^u,alpha}. As mentioned in the introduction, this differs noticeably from the case $\alpha = 0$, which corresponds to discrete-time random interlacements, as $Q^{u,0}$, $u \geq 0$, does \textit{not} have the finite energy property. Indeed, when $\alpha = 0$, if e.g. all the sites neighboring $0$ are vacant (i.e. not visited \textit{at all} by a random walk trajectory), then $0$ is necessarily vacant, too. When $\alpha > 0$, the vacancy of a site only requires its local time to be sufficiently small (less than $\alpha$), and, as hinted in the introduction (below \eqref{RESULT:FINITE_ENERGY}), one can still hope to force $0$ to be occupied by sending an  ``invisible'' trajectory to the origin to ensure that $L_{0,u} > \alpha$.

A key ingredient to make this strategy work is that a certain class of \textit{local} surgical operations on paths can be performed in an absolutely continuous way (with respect to $\mathbb{P}$), as we now explain. We use $\ll$ to denote absolute continuity, and begin with the following general

\begin{lemma} \label{L_AC1}
Let $\nu$ and $\nu'$ be two finite measures on a common measurable space $(W, \mathcal{W})$, and assume that $\nu' \ll \nu$. Then $\mathbb{P}_{\nu'} \ll \mathbb{P}_{\nu}$, where $\mathbb{P}_{\nu}$, $\mathbb{P}_{\nu'}$, denote the laws of the canonical Poisson point processes with intensity measures $\nu$, $\nu'$, respectively.
\end{lemma}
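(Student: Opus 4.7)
The plan is to exploit the standard description of a Poisson point process with finite intensity as a countable mixture of i.i.d.\ samples (conditioned on the total number of points), and then to reduce the absolute continuity statement to absolute continuity of product measures on $W^n$.

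First, I would set up the canonical space $\Pi_f$ of finite point measures on $W$ together with the surjection $\iota : \bigsqcup_{n \geq 0} W^n \to \Pi_f$, $\iota(w_1,\dots,w_n) = \sum_{i=1}^n \delta_{w_i}$, and recall that $\mathbb{P}_\nu$ coincides with the image under $\iota$ of the finite measure $\widetilde{\mathbb{P}}_\nu = \sum_{n\geq 0} e^{-\nu(W)}\, \tfrac{1}{n!}\, \nu^{\otimes n}$ on $\bigsqcup_{n \geq 0} W^n$ (and similarly $\mathbb{P}_{\nu'}$ is the image of $\widetilde{\mathbb{P}}_{\nu'}$). This is just the standard representation of a Poisson point process obtained by sampling $N \sim \textup{Poi}(\nu(W))$ and then $N$ i.i.d.\ points from $\nu/\nu(W)$.

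Next, given a measurable $A \subseteq \Pi_f$ with $\mathbb{P}_\nu(A) = 0$, the nonnegativity of the summands in $\widetilde{\mathbb{P}}_\nu$ forces $\nu^{\otimes n}\!\left(\iota^{-1}(A) \cap W^n\right) = 0$ for every $n \geq 0$. I would then invoke the standard fact that $\nu' \ll \nu$ implies $(\nu')^{\otimes n} \ll \nu^{\otimes n}$ on $W^n$ for every $n$ (the Radon--Nikodym derivative being $\prod_{i=1}^n f(w_i)$ with $f = d\nu'/d\nu$, which is checked by Fubini). Hence $(\nu')^{\otimes n}\!\left(\iota^{-1}(A) \cap W^n\right) = 0$ for all $n$, and summing gives $\mathbb{P}_{\nu'}(A) = \widetilde{\mathbb{P}}_{\nu'}(\iota^{-1}(A)) = 0$, as required.

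The degenerate cases are handled separately and are immediate: if $\nu(W) = 0$ then absolute continuity forces $\nu'(W) = 0$ as well, so $\mathbb{P}_{\nu'} = \mathbb{P}_\nu = \delta_{0}$; and if $\nu'(W) = 0$ then $\mathbb{P}_{\nu'} = \delta_{0}$ while $\mathbb{P}_\nu(\{0\}) = e^{-\nu(W)} > 0$. There is no real obstacle here — the argument is essentially bookkeeping. The only point that deserves a moment of care is the clean identification of $\mathbb{P}_\nu$ as the $\iota$-pushforward of $\widetilde{\mathbb{P}}_\nu$ (in particular the measurability of $\iota$ with respect to the canonical $\sigma$-algebra on $\Pi_f$ generated by evaluations $\omega \mapsto \omega(B)$, $B \in \mathcal{W}$); once that is in place, the conclusion reduces to the standard product-measure absolute continuity applied stratum by stratum.
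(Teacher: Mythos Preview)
Your argument is correct and rests on the same idea as the paper's proof: the mixture representation of a finite-intensity Poisson point process together with $(\nu')^{\otimes n} \ll \nu^{\otimes n}$. The paper is simply terser---it writes down the explicit Radon--Nikodym derivative $e^{-(\nu'(W)-\nu(W))}\prod_{i=1}^n \frac{d\nu'}{d\nu}(w_i)$ and leaves the verification to the reader, whereas you prove absolute continuity by chasing null sets stratum by stratum without exhibiting the density (which your computation in fact implicitly yields).
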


\begin{proof}
One verifies that the Radon-Nikodym derivative is given by (with $\omega = \sum_{i=1}^n \delta_{w_i}$ a generic finite point measure on $W$)
\begin{equation*}
\frac{d \mathbb{P}_{\nu'}}{d \mathbb{P}_{\nu}}(\omega) = e^{-( \nu'(W) - \nu(W))} \prod_{i=1}^n \frac{d\nu'}{d\nu}(w_i).
\end{equation*}
\end{proof}

The following result concerning the intensity measure of random interlacements ensures that Lemma \ref{L_AC1} applies in the context of  \textit{local} ``path surgery arguments'' of a certain kind, which we now describe. We recall (see below \eqref{intensity}) that $\widehat{W}= W \times T$, where $W$ is the space of doubly infinite, nearest-neighbor trajectories in $\mathbb{Z}^d$ spending finite time in any finite subset of $\mathbb{Z}^d$. We denote by $Z_n$, $n \in \mathbb{Z}$, the canonical coordinates on $W$. Moreover, for any $K \subset \subset \mathbb{Z}^d$, we can write $1_{\widehat{W}_K^*} \hat{\nu} = \pi^* \circ (Q_K \otimes P_T)$, where $Q_K$ is a measure on $W$, supported on $W_K^0 \subset W$, which consists of those trajectories hitting $K$ at time $0$. Finally, for arbitrary $K \subset \subset \mathbb{Z}^d$, we denote by $\mathcal{T}_K$ the set of finite-length (discrete-time), nearest-neighbor trajectories in $\mathbb{Z}^d$ starting and ending in the support of $e_K(\cdot)$ (see \eqref{equ_meas}):
\begin{equation} \label{EQ:T_K}
\begin{split}
\mathcal{T}_K =\{ 
&\tau = (\tau(n))_{0\leq n \leq N_\tau} ; \ \text{ $N_\tau \geq 0$, $\tau(n) \in \mathbb{Z}^d$, for $0\leq n \leq N_\tau$,} \\ 
&\text{$|\tau(n+1)-\tau(n)|=1$, for $0\leq n < N_\tau$, and $\tau(0), \tau(N_\tau) \in \text{supp}(e_K)$} \}.
\end{split}
\end{equation}
Let $F: \mathcal{T}_K \rightarrow \mathcal{T}_K$ be a map which preserves initial and final points, i.e. 
\begin{equation} \label{AC_F}
\text{$F(\tau)(0) = \tau(0)$ and $F(\tau)(N_{F(\tau)}) = \tau (N_\tau)$, for all $\tau = (\tau(n))_{0\leq n \leq N_\tau} \in \mathcal{T}_K$}.
\end{equation}
$F$ induces a (measurable) function $\varphi_F: W_K^0 \rightarrow W_K^0$ as follows. For any $w \in W_K^0$, $\varphi_F(w)$ is obtained by replacing the excursion of $w$ from time zero (when it hits $K$) until the time of last visit to $K$ by its image under $F$, i.e.
\begin{equation} \label{AC_phi_F}
Z_n (\varphi_F(w)) = 
\begin{cases}
Z_n(w), & n < 0,\\
\tilde{\tau}(n), & 0\leq n \leq N_{\tilde{\tau}} \\
Z_{n-N_{\tilde{\tau}} + L_K(w)}(w), & n > N_{\tilde{\tau}}
\end{cases}
, \qquad \text{if $\tilde{\tau}= F\big( (Z_n(w))_{0\leq n \leq L_K(w)} \big)$},
\end{equation}
where $L_K(w)= \sup\{ n \geq 0 ; \; Z_n(w) \in K \}$, for $w\in W_K^0$. One then obtains the following 

\begin{lemma}\label{L_AC2} $(K \subset\subset \mathbb{Z}^d)$

\medskip
\noindent For any map $F: \mathcal{T}_K \rightarrow \mathcal{T}_K$ as above, the measure $\varphi_F \circ Q_K$ is absolutely continuous with respect to $Q_K$, and
\begin{equation} \label{L_AC2.1}
 d (\varphi_F \circ Q_K )= f \cdot d Q_K, 
\end{equation} 
with
\begin{equation}  \label{L_AC2.2}
 \quad f(w)= \sum_{ \substack{\tau   \in \mathcal{T}_K: \\ F(\tau) = (w(n))_{0\leq n \leq L_K(w)} }} (2d)^{L_K(w)-N_\tau}, \text{ for $w \in W_K^0$}.
 \end{equation}
\end{lemma}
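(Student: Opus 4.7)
The plan is to compute the image measure $\varphi_F \circ Q_K$ explicitly on a rich enough class of cylinder events by decomposing any $w \in W_K^0$ into three pieces: the \emph{past} $(Z_n(w))_{n<0}$, the \emph{excursion} $\tau_w \stackrel{\text{def.}}{=} (Z_n(w))_{0 \leq n \leq L_K(w)} \in \mathcal{T}_K$, and the \emph{future after the last visit to $K$}, $(Z_{L_K(w) + 1 + n}(w))_{n \geq 0}$. That $\tau_w$ does lie in $\mathcal{T}_K$ comes from $L_K(w) < \infty$ (by transience) together with $Z_0, Z_{L_K(w)} \in \partial^i K = \text{supp}(e_K)$ (the former by definition of $W_K^0$, the latter because $Z_{L_K(w)+1} \notin K$ forces $Z_{L_K(w)}$ to have a neighbour outside $K$). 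Reading off \eqref{AC_phi_F} one checks that $\varphi_F$ leaves the past untouched, replaces $\tau_w$ by $F(\tau_w)$, and keeps the future after the last hit of $K$ unchanged (merely shifting its time index); the endpoint-preservation \eqref{AC_F} guarantees $L_K(\varphi_F(w)) = N_{F(\tau_w)}$, so that the excursion of $\varphi_F(w)$ is exactly $F(\tau_w)$ and the three pieces re-concatenate consistently.

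My next step would be to write down the joint distribution of the triple (past, excursion, future) under $Q_K$. Using the description of $Q_K$ recalled below \eqref{intensity} (initial distribution $e_K$; conditionally on $Z_0$, independent forward SRW and backward SRW conditioned on $\widetilde{H}_K = \infty$), the strong Markov property at time $N_\tau$, and the identity $P_y[\widetilde{H}_K = \infty] = e_K(y)$ for $y \in \partial^i K$, I would obtain that for any $\tau \in \mathcal{T}_K$ and any measurable $A^-$, $A^+$,
\[
Q_K\big[\tau_w = \tau,\, (Z_n)_{n<0} \in A^-,\, (Z_{L_K+1+n})_{n \geq 0} \in A^+\big] = e_K(\tau(0))\, (2d)^{-N_\tau}\, e_K(\tau(N_\tau))\cdot P^-_{\tau(0)}[A^-]\cdot P^+_{\tau(N_\tau)}[A^+],
\]
where $P^-_x$ (resp.\ $P^+_y$) is the law of a backward (resp.\ forward) simple random walk from $x$ (resp.\ $y$) conditioned never to re-enter $K$. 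The crucial feature for what follows is that the past and future factors depend on $\tau$ only through its endpoints $\tau(0), \tau(N_\tau)$.

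The third step is to combine these two ingredients with \eqref{AC_F}. For a cylinder $B = \{\tau_w = \tau,\, (Z_n)_{n<0} \in A^-,\, (Z_{L_K+1+n})_{n\geq 0} \in A^+\}$ one has
\[
\varphi_F^{-1}(B) = \bigsqcup_{\tau' \in F^{-1}(\tau)} \{\tau_w = \tau',\, (Z_n)_{n<0} \in A^-,\, (Z_{L_K+1+n})_{n \geq 0} \in A^+\},
\]
and since $\tau'(0) = \tau(0)$, $\tau'(N_{\tau'}) = \tau(N_\tau)$ for every $\tau' \in F^{-1}(\tau)$, all endpoint / past / future factors $e_K(\tau(0))\, e_K(\tau(N_\tau))\, P^-_{\tau(0)}[A^-]\, P^+_{\tau(N_\tau)}[A^+]$ are common to every summand. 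Dividing by $Q_K(B)$ therefore yields
\[
\frac{(\varphi_F \circ Q_K)(B)}{Q_K(B)} = \sum_{\tau' \in F^{-1}(\tau)} (2d)^{N_\tau - N_{\tau'}},
\]
which, since $N_\tau = L_K(w)$ on $B$, is precisely the expression for $f(w)$ in \eqref{L_AC2.2}. A standard monotone-class argument then extends this identity from the generating $\pi$-system of cylinders to the full $\sigma$-algebra on $W_K^0$, establishing both $\varphi_F \circ Q_K \ll Q_K$ and the formula \eqref{L_AC2.2} for the density.

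The only real difficulty is bookkeeping: one must verify that $\varphi_F$ genuinely respects the three-part decomposition (so that $L_K$ really does jump from $N_\tau$ to $N_{F(\tau)}$ and the ``future'' segment is transported unchanged), check the asserted factorisation of the joint law (where transience and $e_K(y) = P_y[\widetilde{H}_K = \infty]$ enter critically), and confirm convergence of the countable sum $\sum_{\tau' \in F^{-1}(\tau)}(2d)^{-N_{\tau'}}$ --- which is automatic since each summand equals $Q_K[\tau_w = \tau']/(e_K(\tau(0))\,e_K(\tau(N_\tau)))$ and $Q_K$ is a finite measure.
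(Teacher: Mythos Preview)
Your proposal is correct and follows essentially the same route as the paper: both decompose $w\in W_K^0$ into past, excursion $(Z_n)_{0\le n\le L_K}$, and future, use the factorisation of $Q_K$ into $e_K(\tau(0))(2d)^{-N_\tau}e_K(\tau(N_\tau))$ times conditioned-walk laws at the endpoints, and exploit \eqref{AC_F} so that the past/future factors are unchanged under $\varphi_F$ while the excursion weight transforms by $(2d)^{N_{F(\tau)}-N_\tau}$. The only cosmetic differences are that the paper works with cylinders $\{Z_n\in A_n,\,n\in\mathbb{Z}\}$ rather than your $\{\tau_w=\tau\}\times A^-\times A^+$, and bounds $f$ explicitly by $(2d)^{L_K(w)}(1+P_{x_i}[\widetilde H_{x_e}<\infty])$ instead of invoking finiteness of $Q_K$.
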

\begin{proof}
We recall (see \cite{S1}, Theorem 1.1) that the law of $(Z_n)_{0\leq n \leq L_K}$ under $Q_K$ is supported on $\mathcal{T}_K$ and 
\begin{equation} \label{L_AC2_pf1}
\begin{split}
&Q_K[(Z_{-n})_{n\geq 0} \in A, \; (Z_n)_{0 \leq n \leq L_K} =\tau, \; (Z_{n+L_K})_{n\geq 0} \in B] = \\
&P_{\tau(0)}^K[A] \cdot e_K \big(\tau(0)\big) \cdot P_{\tau(0)}[Z_n = \tau(n), \; 0 \leq n \leq N_\tau] \cdot e_K \big(\tau(N_\tau)\big)\cdot P^K_{\tau(N_\tau)}[B],
\end{split}
\end{equation}
for all $\tau \in \mathcal{T}_K$ and $A,B \in \mathcal{W}_+$, where $\mathcal{W}_+$ denotes the canonical $\sigma$-algebra on $W_+$, the space of nearest-neighbor trajectories in $\mathbb{Z}^d$ spending finite time in any finite subset of $\mathbb{Z}^d$, $P_x$, $x\in \mathbb{Z}^d$, is the restriction to $(W_+, \mathcal{W}_+)$ of the law of (discrete-time) simple random walk on $\mathbb{Z}^d$, and $P_x^K[\cdot]=P_x[\cdot| \widetilde{H}_K = \infty]$ (we will use $Z_n$, $n \geq 0$, to denote canonical coordinates on $W_+$). For arbitrary sets $A_n \subset \mathbb{Z}^d$, $n\in \mathbb{Z}$, we have
\begin{align*}
&(\varphi_F \circ Q_K) [Z_n \in A_n, \; n \in \mathbb{Z}] =  Q_K [(Z_n \circ \varphi_F) \in A_n, \; n  \in \mathbb{Z}] = \\
&\sum_{\tau \in \mathcal{T}_K} Q_K [(Z_n \circ \varphi_F) \in A_n, \; n  \in \mathbb{Z}, \;  (Z_n)_{0\leq n \leq L_K}= \tau] \stackrel{\eqref{AC_phi_F}}{=} \\
&\sum_{\substack{\tau \in \mathcal{T}_K : \;  F(\tau)(n) \in A_n, \\ 0\leq n \leq N_{F(\tau)}}} Q_K [Z_n  \in A_n, \; n  < 0, \; Z_{n-N_{F(\tau)}+N_\tau}  \in A_n, \; n> N_{F(\tau)}, \; (Z_n)_{0\leq n \leq L_K}= \tau] \stackrel{\eqref{L_AC2_pf1}}{=} 
\end{align*}
\begin{flalign*}
&\sum_{\substack{\tau \in \mathcal{T}_K : \; F(\tau)(n) \in A_n, \\
0\leq n \leq N_{F(\tau)}}} P_{\tau(0)}^K[Z_n  \in A_n, \; n  > 0]\cdot e_K \big(\tau(0) \big) \cdot (2d)^{-N_\tau} \cdot e_K \big(\tau(N_\tau)\big)& \\
&\qquad \qquad \qquad \quad \times P^K_{\tau(N_\tau)}[Z_n \in A_{n + N_{F(\tau)}}, \; n >0] \stackrel{\eqref{AC_F}}{=}&
\end{flalign*}
\begin{flalign*}
&\sum_{\substack{\tau \in \mathcal{T}_K : \; F(\tau)(n) \in A_n, \\
0\leq n \leq N_{F(\tau)}}} P_{F(\tau)(0)}^K[Z_n  \in A_n, \; n  > 0]\cdot e_K \big(F(\tau)(0) \big) \cdot (2d)^{-N_{F(\tau)}} \cdot e_K \big(F(\tau)(N_{F(\tau)})\big)& \\
&\qquad \qquad \qquad \quad \times P^K_{F(\tau)(N_{F(\tau)})}[Z_n \in A_{n + N_{F(\tau)}}, \; n >0] \cdot (2d)^{N_{F(\tau)} - N_\tau} \stackrel{\eqref{L_AC2_pf1}}{=}&
\end{flalign*}
\begin{flalign*}
&\sum_{\substack{\tau \in \mathcal{T}_K : \;  F(\tau)(n) \in A_n, \\
0\leq n \leq N_{F(\tau)}}} Q_K[Z_n  \in A_n, \; n  < 0, (Z_n)_{0 \leq n \leq L_K} = F(\tau), \; (Z_{n+L_K})_{n \geq 0} \in A_{n+N_{F(\tau)}}, \; n>0 ]& \\
&\qquad \qquad \qquad \quad \times (2d)^{N_{F(\tau)} - N_\tau} \stackrel{\tilde{\tau}= F(\tau)}{=}&
\end{flalign*}
\begin{flalign*}
&\sum_{\tilde{\tau} \in \mathcal{T}_K} Q_K[Z_n  \in A_n, \; n  \in \mathbb{Z}, \; (Z_n)_{0 \leq n \leq L_K} = \tilde{\tau}] \cdot \sum_{\substack{\tau \in \mathcal{T}_K: \\ F(\tau) = \tilde{\tau} }} (2d)^{N_{\tilde{\tau}} -N_\tau} = & \\
&E^{Q_K}[f \; ; \; Z_n \in A_n, \; n \in \mathbb{Z}],&
\end{flalign*}
where, in the penultimate line, we adopt the convention that a sum over an empty indexing set is equal to zero (no contribution arises if $\tilde{\tau} \notin \text{Im}(F)$), and $f$ is defined in \eqref{L_AC2.2}. To see that $f$ is finite, we note that $L_K(w) < \infty$ for all $w \in W_K^0$ ($K$ is a finite set), and observe that, on account of \eqref{AC_F} and \eqref{L_AC2.2}, for every $w \in W_K^0$, setting $x_i = w(0)$ and $x_e= w(L_K(w))$,
\begin{align*}
f(w) &\leq \sum_{ \substack{\tau   \in \mathcal{T}_K: \\ \tau(0)=x_i, \tau(N_\tau)=x_e }} (2d)^{L_K(w)-N_\tau} \\ &= \sum_{ \substack{\tau   \in \mathcal{T}_K: \\ \tau(0)=x_i, \tau(N_\tau)=x_e }} (2d)^{L_K(w)} \cdot P_{x_i}[(Z_n)_{0\leq n \leq N_\tau} = \tau] \leq (2d)^{L_K(w)} (1 + P_{x_i}[\widetilde{H}_{x_e} < \infty]) < \infty.
\end{align*}
We conclude that $f$ is the Radon-Nikodym derivative of $\varphi_F \circ Q_K$ with respect to $Q_K$, i.e., \eqref{L_AC2.1} holds, and in particular, $\varphi_F \circ Q_K \ll Q_K$. This completes the proof of Lemma \nolinebreak \ref{L_AC2}.
\end{proof}

With Lemma \ref{L_AC2} at hand, we proceed to the main result of this section, the finite-energy property of $Q_{u,\alpha}$, for $u,\alpha >0$.

\begin{theorem} \label{T:FE} Let $Y_z$, $z \in \mathbb{Z}^d$, denote the canonical coordinates on $\{ 0,1\}^{\mathbb{Z}^d}$. For all $u > 0$, $\alpha >0$ and $x\in \mathbb{Z}^d$,
\begin{equation} \label{T:FE1}
0< Q_{u,\alpha}(Y_x =1 | \sigma( Y_z, \; z \neq x) )<1, \quad \text{ $Q_{u,\alpha}$-a.s.}
\end{equation}
\end{theorem} 

\begin{proof}
Let $u> 0$, $\alpha >0$. By translation invariance of $Q_{u,\alpha}$, see Lemma \ref{trans_inv}, it suffices to prove \eqref{T:FE1} for $x=0$. The latter amounts to showing that for all $A \in \sigma(Y_z \; ; z \in \mathbb{Z}^d \setminus \{ 0 \})$ with $Q_{u,\alpha}[A]>0$, one has
\begin{align} 
&Q_{u,\alpha}[A \cap \{ Y_0 = 1\}]>0, \label{T:FE12} \\
&Q_{u,\alpha}[A \cap \{ Y_0 = 0\}]>0.  \label{T:FE11}
\end{align}
We begin by proving \eqref{T:FE12}, which is the more difficult part and involves the ``path surgery'' argument described at the beginning of this section (cf. also the discussion following \eqref{RESULT:FINITE_ENERGY}).  We recall the definition \eqref{psi_u,alpha} of the map $\psi_{u,\alpha}$, and define $A_{u,\alpha}= \psi_{u,\alpha}^{-1}(A)$, so that $\mathbb{P}[A_{u,\alpha}] = Q_{u,\alpha}[A] >0$. Thus, we need to show that
\begin{equation} \label{FE2proof0}
\mathbb{P}[A_{u,\alpha}, L_{0,u} > \alpha] >0.
\end{equation}
For arbitrary $K\subset \subset \mathbb{Z}^d$, we write $N_{K,u}$ for the number of trajectories (modulo time-shift) in the interlacement with label at most $u$ which visit $K$, i.e. $N_{K,u}= \omega \big(\widehat{W}_K^* \times [0,u]\big)$. By \eqref{intensity}, $N_{K,u}$ has a Poisson distribution with parameter $u \cdot \text{cap}(K)$, so in particular, $\mathbb{P}[N_{K,u} > 0] = 1 - \exp(-u \text{cap}(K))$. Using \eqref{cap_box}, we can thus find $K=B(0,L)\subset \subset \mathbb{Z}^d$, with $L\geq 1$ sufficiently large, and $\varepsilon >0$ small enough such that the event $A_{u,\alpha}^\varepsilon \stackrel{\text{def.}}{=} A_{u,\alpha} \cap \{ N_{K,u} >0 \} \cap \bigcap_{x\in K} \{ L_{x,u} \notin (\alpha-\varepsilon ,\alpha] \}$ has positive probability under $\mathbb{P}$, and therefore
\begin{equation}\label{FE2proof1}
\mathbb{P}[A_{u,\alpha}^\varepsilon, N_{K,u}=N] >0,
\end{equation}
for some $N\geq 1$. We recall that $\mathcal{T}_K$ is the set of finite nearest-neighbor trajectories on $\mathbb{Z}^d$ starting and ending in $\text{supp}(e_K) \; (= \partial^i K)$, cf. \eqref{EQ:T_K}, and, given some $\tau \in \mathcal{T}_K$, define $\widehat{W}_\tau^* = \pi^* ( \widehat{W}_\tau^0 )$, where
\begin{equation} \label{FE2proof2}
\widehat{W}_{\tau}^0 =  \big\{ \widehat{w} \in \widehat{W}_K^0 \; ; \;  (X_n(\widehat{w}))_{0\leq n \leq L_K(\widehat{w})} = \tau \big\}  \in \widehat{\mathcal{W}},
\end{equation} 
with $L_K(\widehat{w})= \sup\{ n\geq 0 ; \; X_n(\widehat{w})\in K \}$. For arbitrary $\tau \in \mathcal{T}_K$, we consider
\begin{equation*}
D(N,\tau,u) = \big\{ N_{K,u}=N, \;  \omega \big(\widehat{W}_{\tau}^* \times[0,u] \big) >0 \big\},
\end{equation*}
the event that $N$ trajectories (modulo time-shift) with label at most $u$ enter $K$ and the trace left on $\mathbb{Z}^d$ by at least one of them from the time it first enters $K$ until its time of last visit to $K$ is precisely given by $\tau$. Note that $\bigcup_{\tau \in \mathcal{T}_K} D(N, \tau,u)= \{ N_{K,u} = N \}$. Thus, on account of \eqref{FE2proof1}, and since $\mathcal{T}_K$ is a countable set, we may select $\tau \in \mathcal{T}_K$ such that
\begin{equation} \label{FE2proof3}
\mathbb{P}\big[A_{u,\alpha}^\varepsilon, D(N, \tau, u) \big] >0,
\end{equation} 
and consider this $\tau  = (\tau(n))_{0\leq n \leq N_\tau}$, with $0\leq N_\tau < \infty$, to be fixed from now on. Next, we let $\bar{\tau} \in \mathcal{T}_K$ be a closed finite nearest-neighbor path starting and ending in $\tau(0)$, i.e., satisfying $\bar{\tau}(0)=\bar{\tau}(N_{\bar{\tau}})=\tau(0)$, and passing through the origin. Furthermore, we assume that $\text{range}(\bar{\tau}) \subset K$ and that $\bar{\tau}$ visits each vertex in $ \text{range}(\bar{\tau}) \setminus\{\bar{\tau}(0) \}$ exactly once (this can always be arranged). Viewing the set $\widehat{W}_\tau^0$ defined in \eqref{FE2proof2} as $W_\tau^0 \times T$ (see the discussion below \eqref{intensity}), we define the (measurable) map 
\begin{equation} \label{FE2proof_phi}
\begin{split}
\varphi: W \rightarrow W, \quad &\text{where $\varphi$ acts as identity on $W\setminus W_{\tau}^0$, and for $w \in W_{\tau}^0$, $\varphi (w)$ is}\\
&\text{the path obtained by ``inserting'' $\bar{\tau}$ when $w \in W_{\tau}^0$ hits $K$,}
\end{split}
\end{equation}
i.e. such that, given $w \in W_{\tau}^0$, $X_n(\varphi (w))= X_n(w)$, for all $n \leq 0$, $X_n(\varphi (w))= \bar{\tau} (n)$, for $1 \leq n \leq N_{\bar{\tau}}$, and $X_n(\varphi (w))= X_{n-N_{\bar{\tau}}}(w)$, for $n> N_{\bar{\tau}}$ (in particular, observe that $\varphi(W_{\tau}^0) \subset W_K^0$). We also introduce an auxiliary probability $P_{\Lambda}$ on the space $\Lambda= \mathbb{R}_+^{\mathbb{N}_*} \ni (\lambda_n)_{ n \geq 1}$, such that the canonical coordinates on $\Lambda$ are distributed as independent exponential variables with parameter one. We now define, for all $\lambda = (\lambda_n)_{ n \geq 1} \in \Lambda$ and $i \geq 1$, the maps
\begin{equation*}
\varphi_{\lambda,i} : W_{\tau}^0 \times T \rightarrow W_K^0 \times T, \quad \varphi_{\lambda,i} (w,t) = \big( \varphi(w), \;  \theta_{\lambda,i} (t) \big)
\end{equation*}
where $\theta_{\lambda,i} (t)(n) = t(n)$ for $n \leq 0$, $\theta_{\lambda,i} (t)(n) = \lambda_{n+ (i-1) \cdot N_{\bar{\tau}} }$ for $ 1 \leq n \leq N_{\bar{\tau}}$ and $\theta_{\lambda,i} (t)(n) = t(n-N_{\bar{\tau}})$ for $n > N_{\bar{\tau}}$. In words, the effect of $\varphi_{\lambda,i}$ is to add the piece of path $\bar{\tau}$ when $\widehat{w} \in \widehat{W}_{\tau}^0$ hits $K$ and to ``inject'' the holding times $\lambda_{n+(i-1) \cdot N_{\bar{\tau}} }$, $1 \leq n \leq N_{\bar{\tau}} $, underneath it. We further define $\varphi_{\lambda,i}^*: \widehat{W}_{\tau}^* \rightarrow \widehat{W}_K^*$ by $\varphi_{\lambda,i}^* \circ \pi^* = \pi^* \circ \varphi_{\lambda,i}$, for all $\lambda \in \Lambda$ and $i \geq 0$, and extend $\varphi_{\lambda,i}^*$ to $\widehat{W}_K^*$ by letting it act as identity on $\widehat{W}_K^* \setminus \widehat{W}_{\tau}^* $. Finally, we introduce the map $\Phi_{\bar{\tau}}: \Lambda \times \Omega \rightarrow \Omega$,
\begin{equation} \label{L:FE2proof1}
\Phi_{\bar{\tau}} (\lambda, \omega) = \widetilde{\Phi}_{\bar{\tau}} (\lambda, \omega_{K,u}) + (\omega-\omega_{K,u}),
\end{equation}
with $\omega_{K,u}$ as defined in \eqref{omega_K,u}, where
\begin{equation} \label{FE2proof6}
\widetilde{\Phi}_{\bar{\tau}} ( \lambda,  \omega_{K,u} ) = \sum_{i=1}^{n} \delta_{(\varphi_{\lambda,i}^*(\widehat{w}_i^*),u_i)}, \quad \text{if} \quad \omega_{K,u} = \sum_{i=1}^{n} \delta_{(\widehat{w}_i^*,u_i)},
\end{equation}
where we assume for definiteness that the points $(\widehat{w}_i^*,u_i)$ in the support of $\omega_{K,u}$ are ordered according to increasing $u_i$ (this yields a well-defined map $\Phi_{\bar{\tau}}$ on a subset of $\Lambda \times \Omega$ having full measure under $P_\Lambda \otimes \mathbb{P}$, since the $u_i$'s are almost surely different, see \eqref{intensity}). From the preceding construction, we obtain the following  

\begin{lemma} \label{L:FE2_AbsCont} 
The measure $\mathbb{P}_{\bar{\tau}} \stackrel{\text{def.}}{=} \Phi_{\bar{\tau}} \circ (P_\Lambda \otimes \mathbb{P})$ is the law of the Poisson point process on $\widehat{W}^* \times \mathbb{R}_+$ with intensity
\begin{equation} \label{L:FE2_AbsCont1}
 d\hat{\nu}_{K}^{\varphi} \cdot 1_{[0,u]}dl + 1_{(\widehat{W}_{K}^* \times [0,u])^c} d\hat{\nu} dl, \quad \text{where $ \;  
\hat{\nu}_K^\varphi= \pi^* \circ \big((\varphi \circ  Q_K) \otimes P_T \big)$}
\end{equation}
$($see below \eqref{intensity} for notation$)$, with $l$ denoting Lebesgue measure on $\mathbb{R}_+$ and $\varphi$ as defined in \eqref{FE2proof_phi}. Moreover,
\begin{equation} \label{L:FE2_AbsCont2}
 \mathbb{P}_{\bar{\tau}} \ll \mathbb{P}.
\end{equation}
\end{lemma}
\begin{proof2} 
First, we note that $\omega_{K,u}$ and $\omega- \omega_{K,u}$ are independent. Hence, by \eqref{L:FE2proof1} and the superposition principle for Poisson point processes, \eqref{L:FE2_AbsCont1} follows at once if we show that under $P_\Lambda \otimes \mathbb{P}$,
\begin{equation} \label{L:FE2proof2}
\text{$\widetilde{\Phi}_{\bar{\tau}} (\lambda, \omega_{K,u})$ is a Poisson point process  with intensity $d\hat{\nu}_K^\varphi \cdot 1_{[0,u]}dl$}.
\end{equation}
To see the latter, one may argue constructively as follows. Using an explicit representation of the process
$\omega_{K,u}$ (similar to \eqref{mu_K,u_constr} for $\hat{\mu}_{K,u}$ as defined in \eqref{mu_K,u}), \eqref{L:FE2proof2} is immediate upon observing that if $Z_i^*$, $i\geq 1$, are independent, $\widehat{W}_K^*$-valued random elements with distribution $ 1_{\widehat{W}_K^*} \hat{\nu} / \text{cap}(K)$ under some probability $P$, then $\varphi_{\lambda,i}^*(Z_i^*)$, $i\geq 1$, are independent and all distributed according to $\hat{\nu}_K^\varphi / \text{cap}(K)$ under $P\otimes P_\Lambda$.

It remains to show \eqref{L:FE2_AbsCont2}, which will involve Lemma \ref{L_AC2}. We write $\mathbb{P}= G \circ (\mathbb{P}_{K,u} \otimes \overline{\mathbb{P}})$, where $\mathbb{P}_{K,u}$ and $\overline{\mathbb{P}}$ denote the laws of $\omega_{K,u}$ and $\omega-\omega_{K,u}$, respectively, and $G(\omega, \omega')=\omega + \omega'$, so that
\begin{equation}\label{L:FE2proof3}
\mathbb{P}_{\bar{\tau}} = \Phi_{\bar{\tau}} \circ \big[P_\Lambda \otimes \big(G \circ (\mathbb{P}_{K,u} \otimes \overline{\mathbb{P}})\big)\big] \stackrel{\eqref{L:FE2proof1}}{=} G \circ \big[ \big( \widetilde{\Phi}_{\bar{\tau}} \circ(P_\Lambda \otimes\mathbb{P}_{K,u})\big) \otimes \overline{\mathbb{P}}].
\end{equation}
By Lemma \ref{L_AC2}, we obtain that $\varphi \circ Q_K \ll Q_K$ (indeed, the map $\varphi$ defined in \eqref{FE2proof_phi} can be written as $\varphi_F$, with $F: \mathcal{T}_K \rightarrow \mathcal{T}_K$ preserving initial and final points, i.e. satisfying \eqref{AC_F}). Hence, $\hat{\nu}_K^\varphi \ll 1_{\widehat{W}_K^*} \hat{\nu} \ (= \pi^* \circ (Q_K \otimes P_T))$, which, using Lemma \ref{L_AC1} and \eqref{L:FE2proof2}, yields $\big( \widetilde{\Phi}_{\bar{\tau}} \circ(P_\Lambda \otimes \mathbb{P}_{K,u})\big) \ll \mathbb{P}_{K,u}$. Together with \eqref{L:FE2proof3}, this implies \eqref{L:FE2_AbsCont2}, and thus completes the proof of Lemma \ref{L:FE2_AbsCont}. \hfill $\square$
\end{proof2}

\medskip
\noindent We now conclude the proof of \eqref{T:FE12} and recall to this end the definition of $A_{u,\alpha}^\varepsilon \cap D(N,\tau,u)$ in \eqref{FE2proof3}. Denoting by $N_{\bar{\tau}}^0 \in \{1,\dots, N_{\bar{\tau}} \}$ the (only) time at which $\bar{\tau}$ visits the origin, i.e. $\bar{\tau}(N_{\bar{\tau}}^0)=0$, we consider the cylinder set
\begin{equation*}
\mathcal{C}= \big( [0, \varepsilon/N]^{N_{\bar{\tau}}^0 -1} \times (\alpha,\infty)\times [0, \varepsilon/N]^{ N_{\bar{\tau}} - N_{\bar{\tau}}^0} \big)^N \times \mathbb{R}^{\mathbb{N}_* \setminus \{1,\dots,N\cdot N_{\bar{\tau}} \}} \subset \Lambda, 
\end{equation*}
and claim that
\begin{equation} \label{FE2proof7}
\Phi_{\bar{\tau}}^{-1} \big( A_{u,\alpha} \cap \{L_{0,u} > \alpha \}\big) \supset \big(\mathcal{C} \times (A_{u,\alpha}^\varepsilon \cap D(N,\tau,u))\big).
\end{equation}
Indeed, let $ \lambda \in \mathcal{C}$ and $\omega \in A_{u,\alpha}^\varepsilon \cap D(N,\tau,u)$. In particular, at least one trajectory with label at most $u$ in the support of $\omega$ belongs to $\widehat{W}_\tau^*$. By definition, $\Phi_{\bar{\tau}}$ ``adds'' the piece of path $\bar{\tau}$ to each such trajectory when it first hits $K$. This implies that some trajectory with label at most $u$ in the support of $\Phi_{\bar{\tau}} (\lambda,\omega)$ visits $0$, and thus $L_{0,u}(\Phi_{\bar{\tau}} (\lambda,\omega)) > \alpha$, by definition of $\mathcal{C}$. Next, assume $x \in K \setminus \{ 0 \}$ and $L_{x,u}(\omega) \leq \alpha$. Then in fact $L_{x,u}(\omega) \leq \alpha -\varepsilon$, and  $L_{x,u}(\Phi_{\bar{\tau}} (\lambda,\omega)) \leq L_{x,u}(\omega) + N \cdot \frac{\varepsilon}{N} \leq \alpha$, since at most all $N$ trajectories with label at most $u$ hitting $K$ belong to $\widehat{W}_\tau^*$, and any vertex in $K$ is visited by $\bar{\tau}$ at most once. On the other hand, if $x\in K\setminus \{ 0 \}$ and $L_{x,u}(\omega) > \alpha$, then clearly $L_{x,u}(\Phi_{\bar{\tau}} ( \lambda ,\omega))\geq L_{x,u}(\omega) > \alpha$, for, by construction, $\Phi_{\bar{\tau}}$ can only increase the local time at any point. Finally, since the range of $\bar{\tau}$ is contained in $K$, $L_{x,u}(\Phi_{\bar{\tau}} (\lambda,\omega)) = L_{x,u}(\omega)$ for any $x \in K^c$. All in all, this yields $1\{ L_{x,u} (\Phi_{\bar{\tau}} (\lambda,\omega)) > \alpha \}= 1\{ L_{x,u} (\omega) > \alpha \}$, for all $x\in \mathbb{Z}^d \setminus \{ 0 \}$, i.e. $\Phi_{\bar{\tau}} (\lambda ,\omega) \in A_{u,\alpha}$. Thus, \eqref{FE2proof7} holds. Finally, we obtain
\begin{equation*}
\begin{array}{rcl}
\mathbb{P}_{\bar{\tau}}[A_{u,\alpha} , \;  L_{0,u} > \alpha ]  & \hspace{-1ex} =  & \hspace{-1ex} (P_\Lambda \otimes \mathbb{P}) \big[\Phi_{\bar{\tau}}^{-1} \big( A_{u,\alpha} \cap \{L_{0,u} > \alpha \} \big)\big] \\
& \hspace{-1ex} \stackrel{\eqref{FE2proof7}}{\geq} & \hspace{-1ex} P_\Lambda[\mathcal{C}] \cdot \mathbb{P}\big[A_{u,\alpha}^\varepsilon, D(N, \tau,u) \big] \\
& \hspace{-1ex} \stackrel{\eqref{FE2proof3}}{>} & \hspace{-1ex} 0,
\end{array}
\end{equation*}
which, together with \eqref{L:FE2_AbsCont2}, implies \eqref{FE2proof0}, and thus completes the proof of \eqref{T:FE12}. 

\bigskip
\noindent We now turn to the proof of \eqref{T:FE11}, which is considerably simpler (roughly speaking, we can easily \textit{prevent} the local time at $0$ from being too large once we control how many trajectories hit the origin and how often each one of them returns to $0$).  Let $u,\alpha >0$, $A \in \sigma(Y_z \; ; \; z\neq 0 )$ satisfy $Q_{u,\alpha}[A]>0$, and let $A_{u,\alpha} = \psi_{u,\alpha}^{-1}(A)$, as above. We denote by $N_{0,u}$ the number of trajectories (modulo time-shift) with label at most $u$ visiting the origin. Since $N_{0,u}$ is integer-valued, there exists $N \in \mathbb{N}$ such that
\begin{equation} \label{FE1proof2}
\mathbb{P}[A_{u,\alpha},N_{0,u} =N] >0.
\end{equation}
We recall the definition \eqref{H_0^n} of $H_0^n$, $n\geq 1$, the successive times a trajectory visits $0$, and introduce the (increasing) family of measurable subsets (part of $\mathcal{W}_+$)
\begin{equation} \label{V_M}
\widehat{V}_+^n = \{ \widehat{w} \in \widehat{W}_+ \; ; X_0(\widehat{w})=0 \text{ and } H_0^{n+1}(\widehat{w}) = \infty \}, \quad \text{for $n \geq 1$}.
\end{equation}
In words, $\widehat{V}_+^n$ consists of all trajectories in $\widehat{W}_+$ starting in $0$ which visit $0$ at most $n$ times. By virtue of \eqref{mu_K,u_constr}, we obtain that
\begin{equation} \label{FE1proof3}
\mathbb{P}\big[\hat{\mu}_{\{0\},u}(\widehat{V}_+^n) = N \big|N_{0,u} =N \big] =\big( P_0[H_0^{n+1}= \infty] \big)^N \stackrel{\text{\eqref{visit_proba}}}{=} (1-\rho^n)^N \xrightarrow{n\to\infty}1,
\end{equation}
where the last step is due to transience, and $\rho=P_0[\widetilde{H}_0 < \infty]<1$. Since $\mathbb{P}[A_{u,\alpha} | N_{0,u} =N]>0$ by \eqref{FE1proof2}, it follows from \eqref{FE1proof3} that there exists $M \geq 1$ such that $\mathbb{P}\big[A_{u,\alpha}, \hat{\mu}_{\{0\},u}(\widehat{V}_+^M) = N \big|N_{0,u} =N \big] >0$, and thus also
\begin{equation}\label{FE1proof4}
\mathbb{P}\big[A_{u,\alpha} \big| N_{0,u} =N  , \;  \hat{\mu}_{\{0\},u}(\widehat{V}_+^M) = N \big] >0.
\end{equation}
On the event $F(u,N,M)= \{ N_{0,u} =N  , \;  \hat{\mu}_{\{0\},u}(\widehat{V}_+^M) = N\}$, we denote by $Z_i$, $1\leq i \leq N$ the forward part of the trajectories with label at most $u$ hitting $0$, which are distributed  under $\mathbb{P}[ \;\! \cdot \;\! | F(u,N,M)]$ as $N$ independent (continuous-time) simple random walks starting at $0$, conditioned on visiting the origin at most $M$ times each. We consider the random variables 
\begin{equation*}
\zeta_n(Z_i) =
\begin{cases}
\sigma_{H_0^n(Z_i)}(Z_i) & \text{if }H_0^{n}(Z_i) < \infty, \\
0 & \text{else},
\end{cases}
 \quad \text{for $1\leq i \leq N$ and $1 \leq n \leq M$},
\end{equation*}
under $\mathbb{P}[ \;\! \cdot \;\! | F(u,N,M)]$. Note that, conditionally on $F(u,N,M)$, for each realization of the event $A_{u,\alpha}$, the origin is hit at most $N \cdot M$ times ``in total,'' i.e. by any trajectory in the interlacement at level $u$. Hence, noting that $L_{0,u} = \sum_{1\leq i \leq N} \sum_{1 \leq n \leq M} \zeta_n(Z_i)$ under $\mathbb{P}[ \;\! \cdot \;\! | F(u,N,M)]$, we obtain
\begin{equation}\label{FE1proof5}
\begin{split}
&\mathbb{P}\big[A_{u,\alpha}, \;  L_{0,u} \leq \alpha \big| F(u,N,M) \big] \\
&\qquad \geq \mathbb{P}\big[A_{u,\alpha}, \;  \zeta_n(Z_i) \leq \alpha / MN, \; 1\leq i \leq N, \; 1 \leq n \leq M \big| F(u,N,M) \big]   \\
&\qquad \geq \big( 1- e^{- \alpha / MN} \big)^{MN} \cdot \mathbb{P}\big[A_{u,\alpha},  \big| F(u,N,M) \big] \\
&\qquad > 0, 
\end{split}
\end{equation}
where we used in the third line that the variables $\zeta_n(Z_i)$, $1\leq i \leq N$, $1 \leq n \leq M$, are independent of $A_{u,\alpha}$ under $\mathbb{P}[ \;\! \cdot \;\! | F(u,N,M)]$, and \eqref{FE1proof4} in the last step. Finally, \eqref{FE1proof5} implies $\mathbb{P}[A_{u,\alpha}, L_{0,u} \leq \alpha] >0$, which is \eqref{T:FE11}. This completes the proof of \eqref{T:FE1}, and thus of Theorem \ref{T:FE}.
\end{proof}

\begin{remark}\label{FE_V^u,alpha}
Equivalently to the statement of Theorem \ref{T:FE}, the finite energy property holds for the measure $\widetilde{Q}_{u,\alpha}= (1\{x \in \mathcal{V}_{u,\alpha} \})_{x \in \mathbb{Z}^d}$,  for all $u,\alpha > 0$. Indeed, $\widetilde{Q}_{u,\alpha}$ is translation invariant, see Remark \ref{R:0-1law}, and given some $A \in \sigma(Y_z, z \neq 0 )$ satisfying $\widetilde{Q}_{u,\alpha}[A]>0$,
\begin{equation*}
\widetilde{Q}_{u,\alpha}[A \cap \{ Y_0 = 1\}] = Q_{u,\alpha}[\; \overline{A} \cap \{ Y_0 = 0\}] \stackrel{\eqref{T:FE11}}{>}0,
\end{equation*}
where $\overline{A}$ denotes the ``flipped'' event (i.e. $\overline{A}= \iota(A)$ with the inversion map $\iota:  \{0,1 \}^{\mathbb{Z}^d} \rightarrow \{0,1 \}^{\mathbb{Z}^d}$ such that $Y_x \circ \iota = 1- Y_x$, for all $x \in \mathbb{Z}^d$). Similarly, $\widetilde{Q}_{u,\alpha}[A \cap \{ Y_0 = 0\}]>0$ follows from \eqref{T:FE12}. \hfill $\square$
\end{remark}

\begin{corollary} \label{C:UNIQUENESS}  For any $u, \alpha \geq 0$ such that $\eta^{\mathcal{I}}(u,\alpha)>0$ $($see \eqref{eta^I}$)$, respectively $\eta^{\mathcal{V}}(u,\alpha)>0$, the set $\mathcal{I}^{u,\alpha}$, respectively $\mathcal{V}^{u,\alpha}$, contains almost surely a unique infinite component.
\end{corollary}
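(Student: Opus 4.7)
The plan is to apply the classical Burton--Keane uniqueness theorem cited in the introduction (see Theorem 12.2 of \cite{HJ}), for which the three required hypotheses---translation invariance, ergodicity, and finite energy---are essentially all supplied by the preceding material. I would handle the three parameter regimes in turn.

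First I would dispose of the boundary cases. When $\alpha = 0$, the claim reduces to uniqueness of the infinite cluster of $\mathcal{I}^u = \mathcal{I}^{u,0}$ and of $\mathcal{V}^u = \mathcal{V}^{u,0}$, which is already known from Corollary 2.3 of \cite{S1} and Theorem 1.1 of \cite{T1}. When $u = 0$, the field of occupation times vanishes identically, so $\mathcal{I}^{0,\alpha} = \emptyset$ (and the hypothesis $\eta^{\mathcal{I}}(0,\alpha) > 0$ is never satisfied), while $\mathcal{V}^{0,\alpha} = \mathbb{Z}^d$ trivially carries a single infinite cluster. It thus remains to treat the main case $u > 0$, $\alpha > 0$.

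For $u,\alpha > 0$, the measure $Q_{u,\alpha}$ on $(\{0,1\}^{\mathbb{Z}^d},\mathcal{Y})$ is translation invariant and ergodic by Lemma \ref{trans_inv}, and has the finite energy property by Theorem \ref{T:FE}. These are precisely the hypotheses of Burton--Keane, which then yields that the number of infinite components of $\mathcal{I}^{u,\alpha}$ belongs $\mathbb{P}$-almost surely to $\{0,1\}$. Under the assumption $\eta^{\mathcal{I}}(u,\alpha) > 0$, the dichotomy \eqref{dichotomy} forces $\Psi^{\mathcal{I}}(u,\alpha) = 1$, ruling out the empty case and yielding uniqueness. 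The argument for $\mathcal{V}^{u,\alpha}$ is identical, invoking $\widetilde{Q}_{u,\alpha}$ together with Remark \ref{R:0-1law} (translation invariance and ergodicity), Remark \ref{FE_V^u,alpha} (finite energy), and the analogous dichotomy for $\Psi^{\mathcal{V}}$.

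Since the nontrivial content---translation invariance, ergodicity, and, most notably, the finite energy property whose proof occupied Theorem \ref{T:FE}---has already been established, I do not anticipate any real obstacle here; Corollary \ref{C:UNIQUENESS} reduces to a direct application of the Burton--Keane theorem combined with the $0$--$1$ dichotomy of Remark \ref{R:0-1law}.
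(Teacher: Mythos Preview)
Your proposal is correct and matches the paper's own proof essentially line for line: dispose of the boundary cases $u=0$ and $\alpha=0$ via \cite{S1}, Corollary~2.3 and \cite{T1}, then for $u,\alpha>0$ invoke Burton--Keane using translation invariance (Lemma~\ref{trans_inv}) and finite energy (Theorem~\ref{T:FE}, Remark~\ref{FE_V^u,alpha}). The only cosmetic difference is that you spell out the role of ergodicity and the dichotomy~\eqref{dichotomy} explicitly, whereas the paper leaves these implicit in its appeal to Burton--Keane.
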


\begin{proof}
The set $\mathcal{I}^{0,\alpha}$ is (almost surely) empty for all $\alpha \geq 0$. For $u >0$, $\mathcal{I}^{u,0} = \mathcal{I}^{u}$ is almost surely an infinite connected subset of $\mathbb{Z}^d$, cf. \cite{S1}, Corollary 2.3. For $u >0$ and $\alpha > 0$ with $\eta^{\mathcal{I}}(u,\alpha)>0$, the claim follows from a theorem by Burton and Keane (see \cite{BK}, Theorem 2,   or \cite{HJ}, Theorem 12.2), since $Q_{u,\alpha}$ is translation invariant, see Lemma \ref{trans_inv}, and has the finite energy property by Theorem \ref{T:FE}. The same argument applies to $\mathcal{V}^{u,\alpha}$, for all $u,\alpha >0$ such that $\eta^{\mathcal{V}}(u,\alpha)>0$, by virtue of Remark \ref{FE_V^u,alpha}. Finally, uniqueness of the infinite cluster of $\mathcal{V}^{u}=\mathcal{V}^{u,0}$ whenever $\eta^{\mathcal{V}}(u,0)>0$ has been shown in \cite{T1}.
\end{proof}

\section{Percolation of $\mathcal{I}^{u,\alpha}$ for $u>0$ and small positive $\alpha$} \label{I_PERC}

In this section, we show that for any fixed value of $u>0$, the random set $\mathcal{I}^{u, \alpha}$ defined in \eqref{I^u,alpha} contains an infinite connected component with probability one for all sufficiently small, \textit{positive} values of $\alpha$ (depending on $u$), i.e. that the critical level $\alpha_*(u)$, see \eqref{alpha_*}, is strictly positive for all $u>0$. Note that by construction, $\mathcal{I}^{u, 0}$ percolates for every $u >0$.

\begin{theorem} \label{T:ALPHA_*PERC}
For all $u>0$,
\begin{equation} \label{T:U_*perc1}
 \alpha_*(u) >0.
\end{equation}
Moreover, given $u>0$, there exists $R\geq1$ such that for all sufficiently small $\alpha >0$, $\mathcal{I}^{u,\alpha}$ contains $\mathbb{P}$-almost surely an infinite component in the slab $\mathbb{Z}^2 \times [0,R)^{d-2}$.
\end{theorem}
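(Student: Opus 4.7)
The plan is to construct a random subset $\mathcal{J}^{u,\alpha} \subseteq \mathcal{I}^{u,\alpha}$ which, conditionally on the spatial data of the interlacement, is an independent Bernoulli site percolation on $\mathcal{I}^{u,0}$ with retention parameter $p(\alpha) = e^{-\alpha}$, and then to invoke the slab-percolation theorem of R\'ath and Sapozhnikov \cite{RaSa} for such Bernoulli perturbations of the random interlacement set.

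Concretely, fix $u > 0$ and $\omega = \sum_{i \geq 0} \delta_{(\widehat{w}_i^*, u_i)}$. For each $x \in \mathcal{I}^{u,0}(\omega)$, only finitely many indices $i$ satisfy $u_i \leq u$ and $\widehat{w}_i^* \in \widehat{W}_x^*$; let $i(x)$ be the one with smallest label. Since the sequence of holding times attached to a trajectory in $\widehat{W}^*$ is well-defined modulo time-shift, the holding time $\sigma^{(x)}$ of trajectory $i(x)$ at its first visit to $x$ is an unambiguous random variable. Set
\begin{equation*}
\mathcal{J}^{u,\alpha} = \{ x \in \mathcal{I}^{u,0} \; ; \; \sigma^{(x)} > \alpha \}.
\end{equation*}
Since $L_{x,u} \geq \sigma^{(x)}$ whenever $x \in \mathcal{I}^{u,0}$, we have $\mathcal{J}^{u,\alpha} \subseteq \mathcal{I}^{u,\alpha}$.

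Let $\mathcal{G}$ denote the $\sigma$-field generated by the spatial parts $X(\widehat{w}_i)$ and the labels $u_i$, $i \geq 0$. By \eqref{intensity}, conditionally on $\mathcal{G}$ the family of holding times $(\sigma_n(\widehat{w}_i))_{i,n}$ is i.i.d.\ Exp$(1)$, and for distinct $x, y \in \mathcal{I}^{u,0}$, even when $i(x) = i(y)$ the first visits to $x$ and $y$ occur at distinct positions of the common trajectory. Hence $\sigma^{(x)}$ and $\sigma^{(y)}$ are distinct coordinates of this i.i.d.\ family, and conditionally on $\mathcal{G}$ (and a fortiori on $\mathcal{I}^{u,0}$) the indicators $\bigl(1\{x \in \mathcal{J}^{u,\alpha}\}\bigr)_{x \in \mathcal{I}^{u,0}}$ form an independent Bernoulli site percolation on $\mathcal{I}^{u,0}$ with retention probability $e^{-\alpha}$.

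To conclude, I invoke \cite{RaSa}, which provides, for any $u > 0$, an $R = R(u) \geq 1$ such that independent Bernoulli$(p)$ site percolation on $\mathcal{I}^{u,0}$ contains $\mathbb{P}$-a.s.\ an infinite component in the slab $\mathbb{Z}^2 \times [0,R)^{d-2}$ whenever $p$ is sufficiently close to $1$. Applying this with $p = e^{-\alpha}$, which tends to $1$ as $\alpha \to 0$, yields the desired infinite component of $\mathcal{J}^{u,\alpha}$, and therefore of $\mathcal{I}^{u,\alpha}$, in this slab for all sufficiently small $\alpha > 0$, giving in particular $\alpha_*(u) > 0$. The main technical point is the conditional-independence step, where one must verify that the exposed holding times $\sigma^{(x)}$, $x \in \mathcal{I}^{u,0}$, are genuinely pairwise distinct entries of the ambient i.i.d.\ family so that no holding time is reused; a secondary issue is to confirm that the invoked formulation of the R\'ath--Sapozhnikov slab-percolation result indeed covers \emph{independent site} Bernoulli perturbations of $\mathcal{I}^{u,0}$ rather than, say, bond perturbations, which should follow either directly from their statement or via a standard site-bond comparison.
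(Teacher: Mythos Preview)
Your proposal is correct and follows essentially the same route as the paper's proof: you define the same auxiliary set $\mathcal{J}^{u,\alpha}$ via the first-passage holding time at $x$ of the trajectory with smallest label through $x$, observe $\mathcal{J}^{u,\alpha}\subseteq\mathcal{I}^{u,\alpha}$, establish that conditionally on the spatial data these holding times are distinct coordinates of an i.i.d.\ Exp$(1)$ family (the injectivity of $x\mapsto(i(x),n(x))$ is exactly the ``main technical point'' you flag, and is precisely what the paper verifies), and then invoke Theorem~1 of \cite{RaSa}. The only cosmetic difference is that the paper carries out the conditional-independence step via a reduction to finite boxes $K\subset\subset\mathbb{Z}^d$ and an explicit representation of $\omega_{K,u}$, whereas you argue directly on the full configuration; both lead to the same conclusion.
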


\begin{proof}
 We aim at showing that the random set $\mathcal{I}^{u, \alpha}$ stochastically dominates a certain Bernoulli percolation on the interlacement $\mathcal{I}^{u}$. To this end, we define, for every $u \geq 0$, a collection of random variables $(\sigma_{x,u})_{x\in \mathbb{Z}^d}$ on $(\Omega,\mathcal{A}, \mathbb{P})$ (see below \eqref{intensity}) as follows. We recall that $\omega_{\{ x\}, u}$ denotes the restriction of the cloud $\omega \in \Omega$ to the trajectories visiting $x$ with label at most $u$, see \eqref{omega_K,u}, and introduce the $\mathbb{P}$-a.s. well-defined
\begin{equation} \label{T:U_*perc_pf1}
\begin{split}
 &\sigma_{x,u}(\omega) =
 \begin{cases}
 \sigma_{H_x(\widehat{w}_1)}(\widehat{w}_1), &  x \in \mathcal{I}^u(\omega) \\
 0, & \text{else}
 \end{cases} 
 , \\ 
 &\text{if } \omega = \omega_{\{ x\}, u} + \big( \omega - \omega_{\{ x\}, u} \big) \text{ with }  \omega_{\{ x\}, u} = \sum_{i=1}^n \delta_{\widehat{w}_i^*, u_i},
 \end{split}
 \end{equation}
for $x \in \mathbb{Z}^d$ and $u \geq 0$, where we assume for definiteness that the trajectories in the support of $\omega_{\{ x\}, u}$ are ordered according to increasing labels $u_i$ (which are $\mathbb{P}$-a.s. different) and $\widehat{w}_i$ stands for an arbitrary element in the equivalence class of $\widehat{w}_i^*$, i.e. $\pi^*(\widehat{w}_i) =\widehat{w}_i^*$, for all $ 1 \leq i \leq N$. Thus, $\sigma_{x,u}$ vanishes if $x$ lies in the vacant set $\mathcal{V}^u = \mathbb{Z}^d \setminus \mathcal{I}^u$ at level $u$, and otherwise collects the first-passage holding time at $x$ of the interlacement trajectory with smallest label passing through $x$. To fix ideas, if $\widetilde{\Omega}_u \subset \Omega$ denotes the subset of full $\mathbb{P}$-measure on which the collection $(\sigma_{x,u})_{x\in \mathbb{Z}^d}$ is well-defined, we set $\sigma_{x,u} (\omega)=0$ for all $\omega \in \Omega \setminus \widetilde{\Omega}_u$ and $x \in \mathbb{Z}^d$.

By \eqref{T:U_*perc_pf1} and the definition \eqref{L} of local times, $ L_{x,u}(\omega) \geq \sigma_{x,u}(\omega)$, for all $x\in \mathbb{Z}^d$, $u>0$ and $\omega \in \Omega$, whence
\begin{equation} \label{T:U_*perc_pf2}
\begin{split}
 &\text{$Q_{u,\alpha}$, the law of $\big(1\{ x\in \mathcal{I}^{u,\alpha} \} \big)_{x\in\mathbb{Z}^d}$ under $\mathbb{P}$, stochastically dominates} \\
 &\text{the law of $\big(1\{ \sigma_{x,u} > \alpha \} \big)_{x\in\mathbb{Z}^d}$ under $\mathbb{P}$, for all $u > 0$, $\alpha \geq 0$}.
\end{split}
\end{equation}
Abbreviating $\chi_{x,u} = 1\{ x \in \mathcal{I}^u \}$, for $x\in \mathbb{Z}^d$, $u > 0$, and denoting by $\mathcal{F}_u$ the $\sigma$-algebra generated by the random variables $\chi_{x,u}$, $x\in\mathbb{Z}^d$, we will show that for any $B \in \mathcal{B}\big( \mathbb{R}_+^{\mathbb{Z}^d} \big)$ and $u >0$,
\begin{equation} \label{T:U_*perc_pf3}
 \mathbb{P} \big[ (\sigma_{x,u})_{x\in \mathbb{Z}^d} \in B  \big| \mathcal{F}_u \big] = \widetilde{P} \big[ (\chi_{x,u} \cdot \widetilde{\tau}_x)_{x \in \mathbb{Z}^d} \in B \big], \quad \text{$\mathbb{P}$-a.s},
\end{equation}
where $\widetilde{P}$ is an auxiliary probability (which does not act on $\chi_{x,u}$, $x \in \mathbb{Z}^d$) under which $\widetilde{\tau}_x$, $x\in \mathbb{Z}^d$, are distributed as independent exponential variables with parameter one.

Before turning to the proof of \eqref{T:U_*perc_pf3}, we first explain how the assertion \eqref{T:U_*perc1} follows. Letting $\mathcal{J}^{u,\alpha}(\omega)= \{ x\in \mathbb{Z}^d ; \sigma_{x,u}(\omega)> \alpha \}$ $(\subseteq \mathcal{I}^{u,\alpha}(\omega))$, we have, for all $u >0$ and $\alpha \geq 0$,
\begin{equation} \label{T:U_*perc_pf4}
\begin{array}{rcl}
 \mathbb{P} \big[ 0 \stackrel{\mathcal{J}^{u,\alpha}}{\longleftrightarrow} \infty \big] & \hspace{-1ex} = & \hspace{-1ex} \mathbb{E} \big[ \;\! \mathbb{P} [ 0 \stackrel{\mathcal{J}^{u,\alpha}}{\longleftrightarrow} \infty | \mathcal{F}_u ]  \big] \\
& \hspace{-1ex} \stackrel{\eqref{T:U_*perc_pf3}}{=} & \hspace{-1ex} \mathbb{E} \big[ \!\; \widetilde{P} [\text{0 lies in an infinite cluster of $\{x \in \mathbb{Z}^d; \chi_{x,u} \cdot \widetilde{\tau}_x > \alpha \}$}] \big] \\
& \hspace{-1ex} = & \hspace{-1ex} \mathbb{P} \otimes \widetilde{P} \big[ 0 \stackrel{\mathcal{I}^{u} \cap \widetilde{\mathcal{B}}_\alpha}{\longleftrightarrow}  \infty \big],
\end{array}
\end{equation}
where we have defined $\widetilde{\mathcal{B}}_\alpha = \{ x\in \mathbb{Z}^d \;  ; \widetilde{\tau}_x > \alpha \}$ and used in the last step that $\chi_{x,u} \cdot \widetilde{\tau}_x > \alpha$ if and only if $x\in \mathcal{I}^u$ and $\widetilde{\tau}_x> \alpha$, for all $u>0$, $\alpha \geq 0$ and $x\in \mathbb{Z}^d$. The problem of Bernoulli site percolation on the interlacement set has been recently studied in \cite{RaSa}. By virtue of Theorem 1 therein, it follows together with \eqref{T:U_*perc_pf4} that for every $u >0$, there exists $\alpha_0 = \alpha_0 (u) > 0$ such that $\mathcal{J}^{u,\alpha_0}$ percolates in a thick two-dimensional slab. In particular, it follows that $\mathbb{P} \big[ 0 \stackrel{\mathcal{J}^{u,\alpha_0}}{\longleftrightarrow} \infty \big] > 0$, and in turn from  \eqref{T:U_*perc_pf2} that $\mathbb{P} \big[ 0 \stackrel{\mathcal{I}^{u,\alpha_0}}{\longleftrightarrow} \infty \big]>0$, i.e. $\alpha_*(u) \geq \alpha_0(u) > 0$. This completes the proof of \eqref{T:U_*perc1}.

It remains to prove \eqref{T:U_*perc_pf3}. We begin with an elementary result on randomly indexed random variables, which is tailored to our purposes. The setting is as follows. Let $\mathbb{M}$ be a countable (indexing) set, $\tau$ a real-valued random variable with law $P_{ \tau}$ and $U \subset \subset \mathbb{Z}^d$. Suppose that, under some probability $P$,  $(\tau_m)_{m \in \mathbb{M}}$ is a family of i.i.d. copies of $\tau$ and $\mu = (\mu(x))_{x\in U}$ is a random element of $\mathbb{M}^U$ with the property that $\mu(x) \neq \mu(y)$ whenever $x \neq y$. Moreover, $\mu$ be measurable with respect to some $\sigma$-algebra $\mathcal{G}$ independent of $\sigma(\tau_m, \, m \in \mathbb{M})$.

\begin{lemma} \label{L:random_labels} For all bounded, measurable functions $f: \mathbb{R}^U \to \mathbb{R}$,
\begin{equation} \label{EQ:random_labels}
E[f(( \tau_{\mu(x)})_{x\in U}) \, | \mathcal{G} ] = E_{ \tau}^{\otimes U}[f], \quad \text{$E$-a.s.}
\end{equation}
\end{lemma}
\begin{proof}
By independence, for all bounded, real-valued, $\mathcal{G}$-measurable functions $g$,
\begin{equation*}
\begin{split}
E[f(( \tau_{\mu(x)})_{x\in U})  \cdot g  ] &= \sum_{(m_x)_{ x\in U}} E[f(( \tau_{m_x})_{x\in U})]  \cdot E[g 1\{\mu(x)=m_x, x\in U \} ] = E_{ \tau}^{\otimes U}[f] \cdot E[g], 
\end{split}
\end{equation*}
where the sum is over elements of $ \mathbb{M}^U$ (with $m_x \neq m_y$ for all $x \neq y$) and the second step follows because the variables $\tau_m$, $m \in \mathbb{M}$, are identically distributed.
\end{proof}

We now explain how \eqref{EQ:random_labels} can be applied in the present context of random interlacements to yield \eqref{T:U_*perc_pf3}. By Dynkin's lemma, the joint law of $ (\sigma_{x,u}, \; \! \chi_{x,u})_{x\in \mathbb{Z}^d}$ is fully specified by the distribution of its finite-dimensional marginals, thus, in order  to prove \eqref{T:U_*perc_pf3}, it suffices to show
\begin{equation} \label{T:U_*perc_pf6}
\mathbb{P} [\sigma_{x,u} \in B_{x},  \; \chi_{x,u} = \varepsilon_{x}, \; x \in K ]  =  \mathbb{P} \otimes \widetilde{P} [\chi_{x,u} \cdot \widetilde{\tau}_{x} \in B_{x}, \;  \chi_{x,u} = \varepsilon_{x}, \; x \in K],
\end{equation}
for all $K \subset \subset \mathbb{Z}^d$, $u>0$, $B_{x}\in \mathcal{B}(\mathbb{R}_+)$ and $\varepsilon_{x} \in \{ 0,1 \}$, $x \in K$. Defining $U =   \{ x \in K ; \; \! \varepsilon_{x}=1 \}$, we may assume that $B_{x} \ni 0 $ for all $x \in K \setminus U$ in \eqref{T:U_*perc_pf6}, for otherwise both sides of \eqref{T:U_*perc_pf6} vanish ($\chi_{x,u} = 0$ means $x \in \mathcal{V}^u$, whence $\sigma_{x,u}=0$ by \eqref{T:U_*perc_pf1}). Moreover, we may condition on $N_{K,u}$, the number of trajectories with label at most $u$ hitting $K$ to be a fixed number $N \geq 0$, since the general case follows by summation over $N$. All in all, letting $\mathbb{P}_N= \mathbb{P}[\, \cdot \, | N_{K,u} = N]$, it remains to show
\begin{equation} \label{T:U_*perc_pf8}
\begin{split}
  &\mathbb{P}_N [\sigma_{x,u} \in B_{x}, \; x \in U, \; \chi_{x,u} = \varepsilon_{x}, \; x \in K] \\
  & \qquad \qquad =  \widetilde{P}  [ \widetilde{\tau}_{x} \in B_{x}, \; x \in U] \cdot \mathbb{P}_N[   \chi_{x,u} = \varepsilon_{x}, \; x \in K],
\end{split}
\end{equation}
for all $N \geq 0$, $K \subset \subset \mathbb{Z}^d$, $u>0$, $\varepsilon_{x} \in \{ 0,1 \}$, $x \in K$, and $B_{x}\in \mathcal{B}(\mathbb{R}_+)$, $x \in U \, ( =   \{ x \in K ; \; \! \varepsilon_{x}=1 \})$, which we all assume to be fixed from now on. We suppose that $U \neq \emptyset$ and $N > 0$, since \eqref{T:U_*perc_pf8} trivially holds otherwise. Observe that the event on the left hand side of \eqref{T:U_*perc_pf8} only depends on $\omega$ ``through'' $\omega_{K,u}$, the restriction of $\omega$ to all trajectories with label at most $u$ hitting $K$ (recall \eqref{omega_K,u}). Now, under $\mathbb{P}_N$ (see for example \cite{R}, p. 132),
\begin{equation} \label{T:U_*perc_pf9}
 \omega_{K,u} \stackrel{\text{law}}{=} \sum_{i=1}^{N} \delta_{(\pi^*(\widehat{Z}_i),v_i)},
\end{equation}
where $\widehat{Z}_i$, $1\leq i \leq N$, are independent, $\widehat{W}$-valued random elements all distributed according to $ \widehat{Q}_K (d\widehat{w})/\text{cap}(K)$, so they are at time $0$ for the first time in $K$, and $v_1< \cdots < v_{N}$ are independent of $\widehat{Z}_1,\dots, \widehat{Z}_{N}$ and obtained by reordering independent uniform random variables on $[0,u]$. We denote by $\mathbb{P}'_N$ the probability governing these auxiliary random variables.

We now apply Lemma \ref{L:random_labels} to the left-hand side of \eqref{T:U_*perc_pf8} using the representation of $\omega_{K,u}$ (under $\mathbb{P}_N$) given by \eqref{T:U_*perc_pf9}. To this end, we let $\mathcal{G}$ be the $\sigma$-algebra generated by the random variables
\begin{equation*}
 \text{$v_i$, $1\leq i \leq N$, and $X(\widehat{Z}_i)= \big( X_n(\widehat{Z}_i )\big)_{n \geq0}$, $1\leq i \leq N$},
\end{equation*}
and observe that, in the above identity in law \eqref{T:U_*perc_pf9}, $(\chi_{x,u})_{x\in K}$ corresponds to
\begin{equation*}
(\chi_{x,u}')_{x\in K} \stackrel{\text{def.}}{=} \big( 1\{ x \in \text{range}(X(\widehat{Z}_i)) \text{ for some } 1\leq i \leq N\} \big)_{x\in K}.
\end{equation*}
In particular, $\chi_{x,u}'$, $x \in K$, are measurable with respect to $\mathcal{G}$. Moreover, by definition of $\widehat{Q}_K$ in \eqref{intensity} and the discussion following \eqref{T:U_*perc_pf9}, the random variables 
\begin{equation*}
\tau_m \stackrel{\text{def.}}{=} \sigma_{n}(\widehat{Z}_i), \quad \text{for $m =(i,n) \in \{1,\dots,N\} \times \mathbb{N} \stackrel{\text{def.}}{=} \mathbb{M}$,}
\end{equation*}
are i.i.d. exponential variables with parameter $1$, independent of $\mathcal{G}$. Finally, for all $x \in U$, introducing $\mu(x) = (\iota(x), \nu(x))$ with
$$
\iota(x)=  \min \{ 1\leq i \leq N ; \, x \in \text{range}(X(\widehat{Z}_i)) \}, \qquad  \nu(x)= H_{\{ x \}}(\widehat{Z}_{\iota(x)})
$$
(both $\mathcal{G}$-measurable), we see that the variable $\sigma_{x,u}$ defined in \eqref{T:U_*perc_pf1} corresponds to $\tau_{\mu(x)}$ in the representation \eqref{T:U_*perc_pf9}, and by construction, $\mu(x)\neq \mu(y)$ whenever $x \neq y$. All in all, we obtain that the left-hand-side of \eqref{T:U_*perc_pf8} can be rewritten as
\begin{equation*}
\begin{split}
 &\mathbb{P}_N' [\tau_{\mu(x)} \in B_{x}, \; x \in U, \; \chi'_{x,u} = \varepsilon_{x}, \; x \in K] \\
 &\qquad \qquad = \mathbb{P}_N' [\, \mathbb{P}_N'[\tau_{\mu(x)} \in B_{x}, \; x \in U \, | \, \mathcal{G}], \, \chi'_{x,u} = \varepsilon_{x}, \; x \in K],
\end{split}
\end{equation*}
and \eqref{T:U_*perc_pf8} follows immediately by virtue of \eqref{EQ:random_labels}. This completes the proof of \eqref{T:U_*perc_pf3}, and thus of Theorem \ref{T:ALPHA_*PERC}.
\end{proof}

\section{Results on the Gaussian free field} \label{GFF_NOPERC}

We now turn to our other main object of study, the Gaussian free field on $\mathbb{Z}^d$, $d\geq3$, as defined in \eqref{GFF}, and prove in Theorem \ref{T:GFF_NOPERC} below that the level sets $\mathcal{L}^{\geq h}$ defined in \eqref{levelsets} do not percolate when $h \geq 0$ is sufficiently large. We will use this fact in Section \ref{RI_NOPERC} as a crucial preliminary step towards addressing the issue of absence of percolation for the sets $\mathcal{I}^{u,\alpha}$ and $\mathcal{V}^{u,\alpha}$. To begin with, we adapt some of the results obtained in \cite{RS} to our present purposes. This includes setting up an appropriate renormalization scheme. Thus, we introduce a geometrically increasing sequence of length scales
\begin{equation} \label{L_n}
L_{n} = l_0^n L_0, \quad \text{for } n \geq 0,
\end{equation}
with $L_0 \geq 1$, $l_0 \geq 100$ to be specified below, and corresponding renormalized lattices
\begin{equation} \label{LL_n}
\mathbb{L}_n = L_n \mathbb{Z}^d, \quad \text{for } n \geq 0.
\end{equation}
We also define 
\begin{equation} \label{B_n,x}
B_{n,x} = x + \big([0,L_n)\cap \mathbb{Z} \big)^d, \quad \text{for $n \geq 0$ and $x \in \mathbb{L}_n$},
\end{equation}
so that $\{ B_{n,x}  ; \; x \in \mathbb{L}_n \}$ defines a partition of $\mathbb{Z}^d$ into boxes of side length $L_n$ for all $n \geq 0$. For any vertex $x\in \mathbb{Z}^d$, we denote by $y_0(x)$ the unique vertex in $\mathbb{L}_0$ such that $x\in B_{0,y_0(x)}$. 

A finite sequence $\pi =(x_i)_{0\leq i \leq N}$ with $0\leq N < \infty$, $x_i \in \mathbb{L}_n$ for all $0 \leq i \leq N$  and $|x_{i+1} -x_i|= L_n$ for all $0 \leq i \leq N-1$ will be called a nearest-neighbor path (of length $N$) in $\mathbb{L}_n$, for all $n\geq 0$. An (infinite) nearest-neighbor path $\pi =(x_i)_{i\geq 0}$ in $\mathbb{L}_n$ is defined similarly.

Given some nearest-neighbor path $\pi = (x_i)_{i\geq 0}$ in $\mathbb{Z}^d$, we consider the (sub-)sequence $(x_{i_k})_{ 0 \leq k \leq M}$, where $i_0=0$, $i_{k+1}= \inf \{ i > i_k \; ; \; x_i \not \in B_{0, y_0(x_{i_k})}\}$, for all $k\geq 0$ (with the convention $\inf \emptyset = \infty$), and $M = \sup \{ k \geq 0 \; ; \; i_k < \infty \} \; (\leq \infty)$. We then define the \textit{trace} of $\pi$ on $\mathbb{L}_0$ as the sequence
\begin{equation} \label{trace}
 \pi_0 = \big( y_0(x_{i_k}) \big)_{0 \leq k \leq M}.
\end{equation}
It follows that
\begin{equation} \label{trace_path_nn}
 \text{$\pi_0$ is a nearest-neighbor path in $\mathbb{L}_0$ (of possibly finite length)}.
\end{equation}
The same construction works if $\pi$ itself has only finite length, say $N$ (one then defines $i_0=0$, $i_{k+1}= \inf \{ i \in \{ i_k+1,\dots,N \} \; ; \; x_i \not \in B_{0, y_0(x_{i_k})} \}$). In this case, $\pi_0$ necessarily has finite length, too, i.e. $M < \infty$.

We write $T^{(k)} = \{1,2 \}^k$ for all $k \geq 0$ (with the convention $\{1,2 \}^0 = \emptyset$), and $T_n = \bigcup_{0 \leq k \leq n}T^{(k)}$ for the canonical dyadic tree of depth $n$. For any given parameter
\begin{equation} \label{rparameter}
r \geq 10 \text{ satisfying } 2r \leq l_0,
\end{equation}
we call a map $\mathcal{T}: T_n \rightarrow \mathbb{Z}^d$ a \textit{proper embedding of $T_n$ in $\mathbb{Z}^d$ with root at $x \in \mathbb{L}_n$} if
\begin{equation} \label{embedding}
\begin{array}{ll}
\text{i)} & \mathcal{T}(\emptyset) =x, \\
\text{ii)} & \text{for all $ 0 \leq k < n$: if $m_1,m_2 \in T^{(k+1)}$ are the two descendants} \\
& \text{of $m \in T^{(k)}$, then $\mathcal{T}(m_1), \mathcal{T}(m_2) \in \mathbb{L}_{n-k-1} \cap B_{n-k,\mathcal{T}(m)}$, and} \\
& \text{$|\mathcal{T}(m_1)- \mathcal{T}(m_2)|_{\infty} > \frac{L_{n-k}}{r}$}.
\end{array}
\end{equation}
Note that, together with \eqref{L_n}, the condition $l_0 \geq 2r$ in \eqref{rparameter} guarantees that for any $m_1, m_2$ as above, the $\ell^\infty$-distance between the sets $B_{n-k-1,\mathcal{T}(m_1)}$ and $B_{n-k-1,\mathcal{T}(m_2)}$ is bounded from below by $L_{n-k-1}$ (see also Remark \ref{R:DEC_INEQ}, 1) below).

We denote by $\Lambda_{n,x}$ the set of proper embeddings of $T_n$ in $\mathbb{Z}^d$ with root at $x \in \mathbb{L}_n$, for $n \geq 0$. One easily infers that
\begin{equation} \label{Lambda_n,x}
|\Lambda_{n,x}| \leq (l_0^{d})^2 \cdot (l_0^{d})^{2^2} \cdots (l_0^{d})^{2^n} = (l_0^{d})^{2(2^n-1)} \leq (l_0^{2d})^{2^n}.
\end{equation}
We now describe the events of interest. For each $x\in \mathbb{L}_0$, let $A_{0,x}$ be a measurable subset of $\{0,1\}^{\mathbb{Z}^d}$ (endowed with its canonical $\sigma$-algebra, and with canonical coordinates $Y_z$, $z \in \mathbb{Z}^d$). The collection $\{A_{0,x}  ; \; x \in \mathbb{L}_0 \}$ is said to be $\mathbb{L}_0$\textit{-adapted} if
\begin{equation} \label{adapted_events}
 A_{0,x} \in \sigma(Y_z  ; \; z \in B_{0,x}), \text{ for all $x \in\mathbb{L}_0$}.
\end{equation}
For arbitrary $h \in \mathbb{R}$, we introduce the measurable map 
\begin{equation} \label{phi_h}
\Phi_h : \mathbb{R}^{\mathbb{Z}^d} \rightarrow \{0,1\}^{\mathbb{Z}^d}, \quad (\varphi_x)_{x\in\mathbb{Z}^d} \mapsto \big( 1\{ \varphi_x \geq h \} \big)_{x\in\mathbb{Z}^d},
\end{equation}
and, given an $\mathbb{L}_0$-adapted collection $A= \{A_{0,x} ; \; x\in \mathbb{L}_0\}$, consider the quantity
\begin{equation} \label{p_n}
p^A_n(h)= \sup_{x\in \mathbb{L}_n , \; \mathcal{T} \in \Lambda_{n,x}} P^G \Big[ \bigcap_{m \in T^{(n)}} \Phi_h^{-1} \big( A_{0, \mathcal{T}(m)} \big) \Big], \text{ for $n\geq0$, $h\in\mathbb{R}$}
\end{equation}
(recall that $P^G$ denotes the law of Gaussian free field on $\mathbb{Z}^d$, see \eqref{GFF}). The following proposition provides ``recursive'' bounds for $p^A_n(h_n)$ along a suitable sequence $(h_n)_{n\geq 0}$, for certain collections $A$ of $\mathbb{L}_0$-adapted events. 

\begin{proposition} \label{P:DEC_INEQ} $(L_0 \geq 1$, $r \geq 10$, $l_0 \geq 100 \vee 2r$ in \eqref{L_n} and \eqref{rparameter}$)$

\medskip
\noindent There exist positive constants $c_1$ and $c_2$ such that, defining 
\begin{equation} \label{M}
M(n,L_0)= c_2 \big( \log(2^n L_0^d) \big)^{1/2},
\end{equation}
then, given any positive sequence $(\beta_n)_{n \geq 0}$ satisfying
\begin{equation} \label{beta_n_cond}
\beta_n \geq (\log 2)^{1/2} + M(n,L_0), \ \text{for all } n \geq 0,
\end{equation}
and any increasing, real-valued sequence $(h_n)_{n \geq 0}$ satisfying
\vspace{-0.5ex}
\begin{equation} \label{h_n_cond}
h_{n+1} \geq h_n + c_1 \beta_n r^{d-2} \big(2 l_0^{-(d-2)} \big)^{n+1}, \ \text{for all } n \geq 0,
\end{equation}
for all $\mathbb{L}_0$-adapted collections $A= \{ A_{0,x}; \; x \in \mathbb{L}_0\}$ of increasing events, respectively $A'= \{ A'_{0,x}; \; x \in \mathbb{L}_0\}$ of decreasing events, one has
\begin{equation} \label{dec_ineq_incr}
p^A_{n+1}(h_{n+1}) \leq p^A_n(h_n)^2 + 3  e^{-(\beta_n -M(n,L_0))^2}, \ \text{for all } n \geq 0,
\end{equation}
respectively
\begin{equation} \label{dec_ineq_decr}
p^{A'}_{n+1}(-h_{n+1}) \leq p^{A'}_n(-h_n)^2 + 3  e^{-(\beta_n -M(n,L_0))^2}, \ \text{for all } n \geq 0.
\end{equation}
\end{proposition}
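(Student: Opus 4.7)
The plan is to closely mirror the strategy of Proposition 2.2 in \cite{RS}: establish \eqref{dec_ineq_incr} via a one-step recursion based on the Markov property of the Gaussian free field, combined with a Gaussian tail bound on a certain harmonic extension, and deduce \eqref{dec_ineq_decr} from \eqref{dec_ineq_incr} by the symmetry $\varphi \stackrel{\text{law}}{=} -\varphi$ under $P^G$ (which exchanges increasing and decreasing events and sends the threshold $h$ to $-h$). Fix $n \geq 0$, $x \in \mathbb{L}_{n+1}$ and $\mathcal{T} \in \Lambda_{n+1,x}$, and let $\mathcal{T}^{(i)} \in \Lambda_{n,\mathcal{T}(i)}$, $i = 1, 2$, denote the two sub-embeddings obtained by restricting $\mathcal{T}$ to the subtrees rooted at the two children of $\emptyset$. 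By \eqref{embedding} together with $l_0 \geq 2r$, the boxes $B_{n, \mathcal{T}(i)}$, $i=1,2$, which contain all leaves of the sub-embeddings, are disjoint and separated by $\ell^\infty$-distance at least $L_{n+1}/(2r)$.

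Choose disjoint sets $U_1, U_2 \subset \mathbb{Z}^d$ with $U_i \supset B_{n, \mathcal{T}(i)}$ and $d(B_{n, \mathcal{T}(i)}, \partial U_i)$ of order $L_{n+1}/r$, and set $U = U_1 \cup U_2$. The Markov property of the GFF yields the decomposition $\varphi|_U = \hat\varphi + \psi$, where $\hat\varphi$ is the (random) harmonic extension of $\varphi|_{\mathbb{Z}^d \setminus U}$ into $U$ and $\psi$ is an independent centered Gaussian field on $U$ with covariance the Dirichlet Green function $g_U$. Since $U$ is the disjoint union of $U_1$ and $U_2$, $g_U$ is block-diagonal, so $\psi|_{U_1}$ and $\psi|_{U_2}$ are independent. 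Moreover, for any $z$ in a leaf box $B_{0, \mathcal{T}(m)} \subset B_{n, \mathcal{T}(i)} \subset U_i$, the identity $g(z,z) - g_U(z,z) = E_z[g(X_{T_U}, z)]$ combined with $|X_{T_U} - z|_\infty \geq L_{n+1}/(2r)$ and the standard decay $g(y,z) \leq c |y-z|_\infty^{-(d-2)}$ gives $\mathrm{Var}(\hat\varphi(z)) \leq c\, r^{d-2} L_{n+1}^{-(d-2)}$.

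Set $\Delta_n = h_{n+1} - h_n$ and introduce the ``sprinkling'' event
\begin{equation*}
G = \Big\{\sup_{z \in \bigcup_{m \in T^{(n+1)}} B_{0, \mathcal{T}(m)}} |\hat\varphi(z)| \leq \Delta_n\Big\}.
\end{equation*}
A union bound over the at most $2^{n+1} L_0^d$ relevant sites, the Gaussian tail estimate, and the variance bound above yield $P^G[G^c] \leq 2 \cdot 2^{n+1} L_0^d \exp(-\Delta_n^2/(2\bar v_n^2))$ with $\bar v_n^2 \leq c\, r^{d-2} L_{n+1}^{-(d-2)}$; a suitable choice of the constants $c_1$ in \eqref{h_n_cond} and $c_2$ in \eqref{M}, together with $\beta_n - M(n, L_0) \geq (\log 2)^{1/2}$ to absorb multiplicative constants, ensures this is at most $3 e^{-(\beta_n - M(n, L_0))^2}$. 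On $G$, the pointwise bound $\hat\varphi \geq -\Delta_n$ at the relevant sites combined with monotonicity in $\varphi$ of the increasing events $A_{0,y}$ yields $\{\varphi \in \Phi_{h_{n+1}}^{-1}(A_{0,y})\} \cap G \subseteq \{\psi \in \Phi_{h_n}^{-1}(A_{0,y})\} \cap G$ for each leaf box, so using independence of $\psi|_{U_1}$ and $\psi|_{U_2}$,
\begin{equation*}
P^G\Big[\bigcap_{m \in T^{(n+1)}} \Phi_{h_{n+1}}^{-1}(A_{0, \mathcal{T}(m)})\Big] \leq P^G[G^c] + \prod_{i=1}^{2} P^G\Big[\bigcap_{m \in T^{(n)}} \psi|_{U_i} \in \Phi_{h_n}^{-1}(A_{0, \mathcal{T}^{(i)}(m)})\Big].
\end{equation*}
Each factor on the right is then bounded by $p_n^A(h_n)$ after comparing the Dirichlet field $\psi|_{U_i}$ (with covariance $g_{U_i}$) with the free field: writing $\psi|_{U_i} \stackrel{\text{law}}{=} \varphi|_{U_i} - \hat\varphi^{(i)}$ for the harmonic extension $\hat\varphi^{(i)}$ from $\partial U_i$, whose variance obeys the same bound as that of $\hat\varphi$, one absorbs $|\hat\varphi^{(i)}| \leq \Delta_n$ into the same sprinkling by enlarging the constants $c_1, c_2$ (equivalently, enlarging $U_i$ further so that this second extension is automatically controlled). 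Taking the supremum over $x$ and $\mathcal{T}$ on the left-hand side then yields \eqref{dec_ineq_incr}. The main technical obstacle is the precise calibration of $\Delta_n$: it must simultaneously dominate the union bound over $\sim 2^{n+1} L_0^d$ sites (producing the $M(n, L_0)$ correction) and the standard-deviation-type quantity arising from the harmonic-extension variance $\sim r^{d-2} L_{n+1}^{-(d-2)}$, and match the scaling $r^{d-2}(2 l_0^{-(d-2)})^{n+1}$ imposed by \eqref{h_n_cond}, with enough slack to absorb the two successive applications of Markov decomposition needed to restore the free-field distribution in each factor.
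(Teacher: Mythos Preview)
The proposal is correct and follows essentially the same approach as the paper, which (in Remark \ref{R:DEC_INEQ}, 1)) simply defers to Proposition 2.2 of \cite{RS} and records precisely the two modifications you identify: the separation $d(K_{\mathcal{T},1},K_{\mathcal{T},2}) \geq c\,L_{n+1}/r$ (producing the extra factor $r^{d-2}$ in \eqref{h_n_cond}) and the site count $|\bigcup_{m\in T^{(n)}}B_{0,\mathcal{T}(m)}|\leq 2^n L_0^d$ (producing the form of $M(n,L_0)$ in \eqref{M}).

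One minor comment on execution: your way of restoring the free field from the Dirichlet field $\psi|_{U_i}$ via a \emph{second} sprinkling is a valid route, but in \cite{RS} the argument is arranged asymmetrically so that only one sprinkling is needed. One decomposes only around $K_1$ (a neighborhood $\tilde K_1$ of the leaf boxes of subtree $1$, still disjoint from $K_2$), uses independence of the Dirichlet piece $\psi$ from $\varphi|_{K_2}$, and then simply bounds the factor for subtree $2$ by $p_n^A(h_{n+1})\leq p_n^A(h_n)$ via monotonicity in $h$; for subtree $1$, the same good event $G$ (now with the inequality $\hat\varphi\leq\Delta_n$) gives $\{\psi\in\Phi_{h_n}^{-1}(\cdot)\}\cap G \subseteq \{\varphi\in\Phi_{h_n}^{-1}(\cdot)\}$ directly, so no second comparison is required. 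Either route yields \eqref{dec_ineq_incr}, and \eqref{dec_ineq_decr} follows by the symmetry $\varphi\stackrel{\text{law}}{=}-\varphi$ exactly as you say.
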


\begin{remark}$\quad$ \label{R:DEC_INEQ} 1) To prove Proposition \ref{P:DEC_INEQ}, one simply follows the steps of the proof of Proposition 2.2 in \cite{RS} (see also Remark 2.3, 2) in \cite{RS}), with minor modifications: denoting by $T_i^{(n+1)}$, $i=1,2$, the set of leaves in $T_{n+1}$ which are descendants of vertex $i$, i.e. $T_i^{(n+1)}= \{ (i,i_2,\dots,i_{n+1}) \; ; \; i_k\in \{1,2\}, \; 2 \leq k \leq n+1\}$, and letting $K_{\mathcal{T},i} = \bigcup_{m \in T_i^{(n+1)}} B_{0, \mathcal{T}(m)}$, for $\mathcal{T} \in \Lambda_{n+1,x}$, $x \in \mathbb{L}_{n+1}$, one has, by virtue of \eqref{embedding} ii), $K_{\mathcal{T},i} \subset B_{n, \mathcal{T}(i)}$, for $i=1,2$. Hence,
\begin{align*}
 d(K_{\mathcal{T},1}, K_{\mathcal{T},2}) &\geq d\big(B_{n, \mathcal{T}(1)},B_{n, \mathcal{T}(2)}\big) \\ & = |\mathcal{T}(1)-\mathcal{T}(2)|_\infty - L_n \stackrel{\eqref{embedding}}{ >} \frac{L_{n+1}}{r} -  L_n \stackrel{\eqref{L_n},\eqref{rparameter}}{\geq} c \cdot \frac{L_{n+1}}{r},
\end{align*}
for some $c>0$ and all $\mathcal{T} \in \Lambda_{n+1,x}$, $n\geq 0$, $x \in \mathbb{L}_{n+1}$. In contrast, to deduce (2.31) in \cite{RS}, one uses the fact that the sets $K_1$, $K_2$ defined in (2.27) of \cite{RS} satisfy $d(K_1,K_2)\geq c'L_{n+1}$. This accounts for the change from (2.23) in \cite{RS} to \eqref{h_n_cond} above.

Furthermore, by \eqref{adapted_events}, the event on the right-hand side of \eqref{p_n} is measurable with respect to the $\sigma$-algebra generated by the coordinates $\varphi_x$, for $x \in  \bigcup_{m \in T^{(n)}}B_{0,\mathcal{T}(m)}$. The cardinality of this set is bounded by $2^nL_0^d$, which justifies the modification in the definition of $M(n,L_0)$ from (2.21) in \cite{RS} to \eqref{M} above.

\medskip
\noindent 2) In all applications below, the collections of $\mathbb{L}_0$-adapted events will be of the form $A=\{ t_x(A_{0,0}) ; \; x\in \mathbb{L}_0 \}$ ($(t_x)_{x\in \mathbb{Z}^d}$ denote the canonical shifts on $\{0,1\}^{\mathbb{Z}^d}$), with $A_{0,0}$ increasing or decreasing and measurable with respect to the coordinates in $B_{0,0}$. In this case, in the definition \eqref{p_n} of $p_n^A(h)$, it suffices to take the supremum over all $\mathcal{T}\in \Lambda_{n,x}$ for an arbitrary fixed $x\in \mathbb{L}_n$, by translation invariance.

\medskip
\noindent 3) By definition, for all $\mathbb{L}_0$-adapted collections $A$, $A'$, of increasing, respectively decreasing, events, and for all $n\geq 0$,
\begin{equation} \label{p_n_monotone}
 \text{$p_n^A(h)$ is a non-increasing and $p_n^{A'}(h)$ a non-decreasing function of $h\in\mathbb{R}$}.
\end{equation}

\medskip
\noindent 4) For future reference, we define two particular collections of $\mathbb{L}_0$-adapted events, namely
\begin{equation} \label{A}
 A= \{ A_{0,x} \; ; \; x \in \mathbb{L}_0\}, \text{ with } A_{0,x} = \bigcup_{y \in B_{0,x}} \{ Y_y = 1 \},  \text{ and } A'= \{ \overline{A}_{0,x} \; ; \; x \in \mathbb{L}_0\}, 
\end{equation}
where $\overline{A}_{0,x} = \bigcup_{y \in B_{0,x}} \{ Y_y = 0 \}$ denotes the event obtained by ``flipping'' all configurations of $A_{0,x}$, cf. Remark \ref{FE_V^u,alpha}. By symmetry, one easily deduces that $p_n^A(h)= p_n^{A'}(-h)$, for $A,A'$ as defined in \eqref{A}, all $n\geq 0$ and $h \in \mathbb{R}$.

\medskip
\noindent 5) As is to be expected, the parameter $r$, which regulates the sparsity of the tree in \eqref{embedding}, competes against the ``sprinkling'' condition \eqref{h_n_cond} (intuitively, the bigger $r$ is, the closer the leaves of the tree can potentially get, hence the larger the increase in parameter $h_{n+1}-h_{n}$ needs to be in order to dominate the interactions). For the purposes of the present work, taking $r$ proportional to $l_0$ will suffice (thus, one could omit it completely from the picture). We do however keep track of the parameter $r$ in our presentation of the renormalization scheme in anticipation of future applications, for which it might be needed. \hfill$\square$ 
\end{remark}

Upon selecting (as in (2.51) of \cite{RS})
\begin{equation} \label{beta_n}
\beta_n = (\log 2)^{1/2} + M(n,L_0) + 2^{(n+1)/2} \big(n^{1/2} + K_0^{1/2}\big), \quad n \geq 0,
\end{equation}
for some $K_0 > 0$ to be specified below in \eqref{K_0}, and with $M(n,L_0)$ as defined in \eqref{M} (note that \eqref{beta_n} satisfies the condition \eqref{beta_n_cond} for every choice of $K_0 >0$), one can inductively propagate the bounds \eqref{dec_ineq_incr} and \eqref{dec_ineq_decr}, provided the induction can be initiated, see \eqref{p_0_cond} below.

\begin{proposition} \label{P:DEC_INEQ_PROPAGATED} $(L_0 \geq 1$, $r \geq 10$, $l_0 \geq 100 \vee 2r)$

\medskip
\noindent Let $A$ be an $\mathbb{L}_0$-adapted sequence of increasing events, respectively $A'$ an $\mathbb{L}_0$-adapted sequence of decreasing events. Assume $h_0 \in \mathbb{R}$ and $K_0 \geq 3(1-e^{-1})^{-1} \stackrel{\textrm{def.}}{=}B$ are such that
\begin{equation} \label{p_0_cond}
p_0^A(h_0) \leq e^{-K_0}, \quad \text{resp.} \quad p_0^{A'}(-h_0) \leq e^{-K_0},
\end{equation}
and let the sequence $(h_n)_{n \geq 0}$ satisfy \eqref{h_n_cond} with $(\beta_n)_{n\geq0}$ as defined in \eqref{beta_n}. Then,
\begin{equation} \label{p_n_bounds}
p_n^A(h_n) \leq e^{-(K_0 - B)2^n},  \quad \text{resp.} \quad p_n^{A'}(-h_n) \leq e^{-(K_0 - B)2^n} \qquad \text{for all } n \geq 0. 
\end{equation}
\end{proposition}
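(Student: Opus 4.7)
The plan is to prove \eqref{p_n_bounds} by induction on $n$, repeatedly applying the recursive inequality \eqref{dec_ineq_incr} (respectively \eqref{dec_ineq_decr}) furnished by Proposition \ref{P:DEC_INEQ}. The base case $n = 0$ is precisely the hypothesis \eqref{p_0_cond}. The argument for the two statements is identical (the decreasing case uses \eqref{dec_ineq_decr} in place of \eqref{dec_ineq_incr}), so I focus on the increasing case.

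The key quantitative input is an explicit estimate of the error term. From \eqref{beta_n},
\[
\beta_n - M(n, L_0) = (\log 2)^{1/2} + 2^{(n+1)/2}\bigl(n^{1/2} + K_0^{1/2}\bigr),
\]
so squaring and discarding the positive cross-term yields $(\beta_n - M(n, L_0))^2 \geq \log 2 + 2^{n+1}(n + K_0)$, whence $3\, e^{-(\beta_n - M(n, L_0))^2} \leq \tfrac{3}{2}\, e^{-2^{n+1}(n + K_0)}$. To absorb this error through the induction, I propose to propagate the slightly stronger bound $p_n^A(h_n) \leq e^{-\widetilde{K}_n 2^n}$ along an auxiliary decreasing sequence $\widetilde{K}_n = K_0 - B(1 - 2^{-n})$, which satisfies $\widetilde{K}_0 = K_0$, $\widetilde{K}_n \downarrow K_0 - B$, and $\widetilde{K}_n \geq K_0 - B$ for every $n$. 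This choice yields the convenient identity $(\widetilde{K}_n - \widetilde{K}_{n+1})\cdot 2^{n+1} = B$ for all $n \geq 0$.

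Substituting the strengthened inductive hypothesis into \eqref{dec_ineq_incr} gives $p_{n+1}^A(h_{n+1}) \leq e^{-\widetilde{K}_n 2^{n+1}} + \tfrac{3}{2}\, e^{-2^{n+1}(n + K_0)}$; factoring out $e^{-\widetilde{K}_{n+1} 2^{n+1}}$ and using $(K_0 - \widetilde{K}_{n+1}) 2^{n+1} = B(2^{n+1} - 1)$, the inductive step reduces to verifying
\[
 e^{-B} + \tfrac{3}{2}\, e^{B - (n + B)\, 2^{n+1}} \leq 1 \qquad \text{for all } n \geq 0.
\]
For $n \geq 1$ the second summand is super-exponentially small in $n$ and the inequality is immediate; for $n = 0$ it reduces to $\tfrac{5}{2}\, e^{-B} \leq 1$, i.e.\ $e^B \geq 5/2$, which is amply ensured by the choice $B = 3(1 - e^{-1})^{-1}$. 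Since $\widetilde{K}_n \geq K_0 - B$, the stronger bound implies the stated estimate \eqref{p_n_bounds}. The only genuine obstacle is the bookkeeping at $n = 0$: the error $\tfrac{3}{2}\, e^{-2 K_0}$ is comparable to $p_0^2$ itself, which forces a strictly positive loss in the exponent at the first step and is precisely the reason why the statement must allow a positive constant $B$.
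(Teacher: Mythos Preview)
Your proof is correct and is precisely the inductive propagation argument that the paper invokes by reference: the paper's own proof consists of the single sentence ``On account of Proposition~\ref{P:DEC_INEQ} and \eqref{beta_n}, the proof of Proposition~\ref{P:DEC_INEQ_PROPAGATED} is the same as that of Proposition~2.4 in~\cite{RS},'' and your write-up supplies exactly that computation in self-contained form, with the auxiliary sequence $\widetilde{K}_n = K_0 - B(1-2^{-n})$ playing the same role as in \cite{RS}. One minor omission: you should remark that \eqref{beta_n} satisfies \eqref{beta_n_cond} (trivially, since $K_0 > 0$), so that Proposition~\ref{P:DEC_INEQ} is indeed applicable; the paper notes this just after \eqref{beta_n}.
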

\begin{proof} 
On account of Proposition \ref{P:DEC_INEQ} and \eqref{beta_n}, the proof of Proposition \ref{P:DEC_INEQ_PROPAGATED} is the same as that of Proposition 2.4 in \cite{RS}. 
\end{proof}

For the present purposes, it will suffice to select
\begin{equation} \label{K_0}
 K_0=  \log \big(2l_0^{2d}\big)+ B 
\end{equation}
in the definition \eqref{beta_n} of $\beta_n$. Moreover, we will solely consider (increasing) sequences $(h_n)_{n \geq0}$ with
\begin{equation} \label{h_n}
h_0 > 0,  \qquad h_{n+1} - h_n = c_1 \beta_n r^{d-2} \big(2 l_0^{-(d-2)} \big)^{n+1}, \qquad \text{for all } n \geq 0,
\end{equation}
so that condition \eqref{h_n_cond} is satisfied. Note that $L_0,$ $l_0$, $r$ and $h_0$ are the only parameters which remain to be selected in order to fully specify the sequence $(h_n)_{n \geq 0}$. But for any choice of $L_0 \geq 1$, $r \geq 10$, $l_0 \geq 100 \vee 2r$ and $h_0 > 0$, the limit $h_{\infty}= \lim_{n\to \infty} h_n$ is finite. Indeed, we observe that $\beta_n$ as defined in \eqref{beta_n} (with $M(n,L_0)$ given by \eqref{M} and $K_0$ by \eqref{K_0}) satisfies $\beta_n \leq c(L_0,l_0)2^{n+1}$, for all $n \geq 0$. Hence, 
\begin{equation} \label{h_infty}
h_\infty \stackrel{\eqref{h_n}}{=} h_0 +  c_1 r^{d-2} \sum_{n=0}^\infty \beta_n \big(2 l_0^{-(d-2)} \big)^{n+1} \leq h_0 + c'(L_0,l_0,r) \sum _{n=0}^\infty \big(4 l_0^{-(d-2)} \big)^{n+1} < \infty.
\end{equation}
This completes the description of the renormalization scheme. 

\bigskip

Next, we establish a (somewhat general) geometric lemma, similar to Lemma 6 of \cite{RaSa}, which will be useful in several instances below. The setting is as follows. Given an integer $N\geq1$ and a sequence of length scales $(L_n)_{n\geq 0}$ satisfying \eqref{L_n}, for some $L_0 \geq 1$, $r \geq 10$ and $l_0 \geq 100 \vee 2r$, we consider a family of events $A_{0,x}^{(i)}$, $x \in \mathbb{L}_0$, $1 \leq i \leq N$, on the space $\{0,1\}^{\mathbb{Z}^d}$ (equipped with its canonical $\sigma$--algebra), such that $A^{(i)} = \{A_{0,x}^{(i)} ; \; x \in \mathbb{L}_0 \}$ forms a collection of $\mathbb{L}_0$-adapted events, for all $i=1 \dots,N$. The reason for considering $N$ such collections (rather than just one) is the monotonicity condition on the events in Propositions \ref{P:DEC_INEQ} and \ref{P:DEC_INEQ_PROPAGATED}. In applications, $N$ will typically be less than $10$. 
 
In order to gain some control over the distance between the localized, but ``amorphous'' events in $A^{(i)}$, we impose certain geometric constraints, using the tree structure in \eqref{embedding}. To this end, we first recall that $\Lambda_{n,x}$ is the set of proper embeddings of $T_n$ (the canonical binary tree of depth $n$) in $\mathbb{Z}^d$ with root at $x\in\mathbb{L}_n$, see \eqref{embedding}, and define the events
\begin{equation}\label{A_n,x}
A_{n,x}^{(i)} = \bigcup_{\mathcal{T}\in \Lambda_{n,x}} \bigcap_{m\in T^{(n)}} A_{0,\mathcal{T}(m)}^{(i)}, \text{ for $n\geq 0$, $x\in\mathbb{L}_n$ and $1\leq i \leq N$}
\end{equation}
(this definition is consistent in the case $n=0$). In words, $A_{n,x}^{(i)}$ is the event that $2^n$ ``well-separated'' events $A_{0,y}^{(i)}$ (indexed by the leaves of a binary tree of depth $n$), ``located'' within the box $B_{n,x}$, all simultaneously occur. In particular, on account of \eqref{embedding}, if $A_{0,y}^{(i)}$ and $A_{0,y'}^{(i)}$ are any two such events, then $y,y' \in B_{n,x} \cap \mathbb{L}_0$ and $|y-y'|_\infty > L_1 / r \ (\geq 2L_0)$. Moreover, by \eqref{embedding}, we observe that any proper embedding $\mathcal{T}\in \Lambda_{n,x}$, $n\geq 1$, is uniquely determined by specifying $\mathcal{T}(i)=x_i \in \mathbb{L}_{n-1}\cap B_{n,x}$, $i=1,2$, with $|x_1-x_2|_\infty > L_{n}/r$, and $\mathcal{T}_i \in \Lambda_{n-1,x_i}$, for $i=1,2$, where $\mathcal{T}_i((i_1,\dots,i_k))= \mathcal{T}((i,i_1,\dots,i_k)),$ for $(i_1,\dots,i_k)\in T^{(k)}$, $0\leq k \leq n-1$, i.e., $\mathcal{T}_i$ is the embedding corresponding to the restriction of $\mathcal{T}$ to the descendants of $i$ in $T^n$, for $i=1,2$. Thus, from \eqref{A_n,x}, we obtain the recursion
\begin{equation}\label{A_n,xRECURSION}
A_{n,x}^{(i)} = \bigcup_{\substack{x_1,\; x_2 \in \mathbb{L}_{n-1}\cap B_{n,x} \\ |x_1-x_2|_\infty > L_{n}/r}} A_{n-1,x_1}^{(i)} \cap A_{n-1,x_2}^{(i)}, \text{ for $n\geq 1$, $x\in\mathbb{L}_n$ and $1\leq i \leq N$}.
\end{equation}

We now introduce some randomness into the setting, and assume to this end that $(\Omega, \mathcal{F},P)$ is a probability space on which $N$ $\{0,1\}^{\mathbb{Z}^d}$-valued random fields $\zeta^{(i)}=(\zeta^{(i)}_x)_{x \in \mathbb{Z}^d}$, $1 \leq i\leq N$, are defined. (In applications below, $\zeta^{(i)}$ will involve the Gaussian free field on $\mathbb{Z}^d$, and later also $(L_{x,u})_{x \in \mathbb{Z}^d}$, $u \geq 0$, the field of occupation times for continuous-time interlacements, see above \eqref{T:GFF_NOPERCpf11} and \eqref{T:U_*pf6}.) We consider the events
\begin{equation} \label{bad_events_general}
\mathcal{B}_{n,x}^{(i)} = \{ \zeta^{(i)} \in A_{n,x}^{(i)} \} 
\end{equation}
on $\Omega$, for $n\geq 0$, $x \in \mathbb{L}_n$ and $1 \leq i \leq N$, with $A_{n,x}^{(i)}$ as above. We think of the events $\mathcal{B}_{n,x}^{(i)}$ as ``bad'' events, and their probability will be typically very small, see \eqref{cram_condition} below. Accordingly, we define a vertex $x\in \mathbb{L}_n$ to be \textit{$n$-bad of type $i$} if the event $\mathcal{B}_{n,x}^{(i)}$ occurs (under $P$), and simply \textit{$n$-bad} if the type is not specified, i.e. if
\begin{equation*}
\bigcup_{i=1}^N \mathcal{B}_{n,x}^{(i)}
\end{equation*}
occurs. A $0$-bad vertex will be called \textit{bad}. Further, we define $\big\{B(x,L_n)\stackrel{\text{bad}}{\longleftrightarrow}S(x,2L_n)\big\}$, for $n\geq0$ and $x\in \mathbb{L}_n$, as the event that $B(x,L_n)\cap \mathbb{L}_0$ is connected to $S(x,2L_n)\cap \mathbb{L}_0$ by a nearest-neighbor path of bad vertices in $\mathbb{L}_0$. The following geometric lemma will be useful in proving that long paths of bad vertices in $\mathbb{L}_0$ have small probability. For future reference, let
\begin{equation} \label{c_3}
c_3(N)= 4(N4^d+1),  \text{ for $N \geq 1$}.
\end{equation}

\begin{lemma} \label{L:CRAMGEN} $(N\geq 1$, $L_0 \geq 1$, $r \geq c_3(N)$, and $l_0 \geq 2r)$

\medskip

\noindent  For all $n\geq0$ and $x\in \mathbb{L}_n$, one has
\begin{equation} \label{L:CRAMGEN_conclusion}
\big\{B(x,L_n)\stackrel{\text{bad}}{\longleftrightarrow}S(x,2L_n)\big\} \subseteq 
\bigcup_{y\in \mathbb{L}_n \cap (x+ [-2L_n,2L_n)^d)} \{ y \text{ is $n$-bad}\}.
\end{equation}
\end{lemma}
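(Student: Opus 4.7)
I will argue by induction on $n \geq 0$.

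The base case $n=0$ is immediate: if the event occurs, the first vertex $v_0$ of any witnessing bad path in $\mathbb{L}_0$ is $0$-bad and lies in $B(x, L_0) \cap \mathbb{L}_0 \subseteq \mathbb{L}_0 \cap (x + [-2L_0, 2L_0)^d)$.

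For the inductive step $n \to n+1$, I would start from a bad path $\pi = (v_0, \ldots, v_k)$ witnessing the event at scale $n+1$, truncated at its first hit of $S(x, 2L_{n+1})$, so that $|v_j - x|_\infty \leq 2L_{n+1}$ throughout. The plan is to extract a \emph{macroscopic skeleton} $u_0 = v_0, u_1, \ldots, u_P$ by setting $u_p = v_{t_p}$ with $t_p = \min\{j > t_{p-1} : |v_j - x|_\infty \geq L_{n+1} + p D L_n\}$, for an integer $D \geq 1$ to be calibrated below, and terminating the construction with a small buffer (so that every $u_p$ satisfies, say, $|u_p - x|_\infty \leq 2L_{n+1} - 5L_n$). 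Letting $w_p \in \mathbb{L}_n$ be the unique $\mathbb{L}_n$-lattice point with $u_p \in B_{n, w_p}$, the portion of $\pi$ starting at $u_p$ reaches $u_{p+1}$ at $\ell^\infty$-distance $\geq (D - 1)L_n > 2L_n$ from $w_p$ (for $D > 4$). A discrete intermediate-value argument---exploiting that nearest-neighbor steps in $\mathbb{L}_0$ have $\ell^\infty$-size $L_0$ and that $2L_n$ is a multiple of $L_0$---then forces this segment to meet $S(w_p, 2L_n) \cap \mathbb{L}_0$, yielding a bad crossing from $B(w_p, L_n)$ to $S(w_p, 2L_n)$. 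The inductive hypothesis supplies, for each $p$, an $n$-bad vertex $\tilde{w}_p \in \mathbb{L}_n \cap (w_p + [-2L_n, 2L_n)^d)$ of some type $i_p \in \{1, \ldots, N\}$.

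Thanks to the buffer, each $\tilde{w}_p$ lies in a box $B_{n+1, y_p}$ with $y_p \in \mathbb{L}_{n+1} \cap (x + [-2L_{n+1}, 2L_{n+1})^d)$, and the coordinate $y_p$ takes at most $4^d$ distinct values, so the pair $(y_p, i_p)$ ranges over at most $N \cdot 4^d$ categories. Choosing $D$ so that $P + 1 \geq N \cdot 4^d + 1$, pigeonhole produces $p \neq p'$ with matching category $(y, i)$; the spacing $|u_p - u_{p'}|_\infty \geq DL_n - L_0$ of the skeleton then yields, via the triangle inequality, $|\tilde{w}_{p'} - \tilde{w}_p|_\infty > L_{n+1}/r$ as soon as $D$ exceeds a prescribed function of $l_0/r$. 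Invoking the recursion \eqref{A_n,xRECURSION}, this exhibits $y$ as an $(n+1)$-bad vertex of type $i$, lying in the required target set, which closes the induction.

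The principal delicacy lies in the simultaneous calibration of $D$: the separation requirement forces $D \gtrsim l_0/r$, while the cardinality requirement demands $l_0/D \gtrsim N \cdot 4^d$. Both can be met precisely when the hypothesis $r \geq c_3(N) = 4(N 4^d + 1)$ together with $l_0 \geq 2r$ holds; the particular form of $c_3(N)$ is dictated exactly by this balance (e.g., $D = 8$ suffices in the tight case $l_0 = 2r$). I expect the main obstacle in writing the proof to be this bookkeeping, rather than any conceptual step.
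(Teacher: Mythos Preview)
Your strategy matches the paper's proof: induction on $n$, extracting $N4^d+1$ well-spaced sub-crossings along the bad path, applying the inductive hypothesis to each, then pigeonholing on (ambient box in $\mathbb{L}_{n+1}$, type) and invoking the recursion \eqref{A_n,xRECURSION}.

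The one point of divergence is how you place the skeleton, and this is where your bookkeeping does not close. You take $u_p$ on the path (hence in $\mathbb{L}_0$), round to $w_p\in\mathbb{L}_n$, and only then apply the hypothesis to get $\tilde w_p$; so $|\tilde w_p - u_p|_\infty$ can be nearly $3L_n$, and the triangle inequality yields only $|\tilde w_p - \tilde w_{p'}|_\infty \geq (D-6)L_n - L_0$. The separation target is $L_{n+1}/r = (l_0/r)L_n$, while cardinality needs roughly $l_0/D \geq N4^d$. Eliminating $D$ forces $l_0(1 - N4^d/r) \gtrsim 7N4^d$, and with $r=c_3(N)=4(N4^d+1)$, $l_0=2r$ this fails (your assertion that ``$D=8$ suffices in the tight case'' does not hold under your own stated bound $|u_p-u_{p'}|_\infty \geq DL_n - L_0$: one gets $(D-6)L_n - L_0 = 2L_n - L_0 < 2L_n = L_{n+1}/r$). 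The extra $u_p\to w_p$ rounding, absent from the paper, is what costs you the missing factor.

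The paper avoids this by picking the reference points $z_j$ \emph{directly} in $\mathbb{L}_{n-1}$, on the spheres $S(x_0,\,L_n+4[L_n/r]j)$, $0\le j\le N4^d$, with $\pi\cap B(z_j,L_{n-1})\ne\emptyset$. Then the only loss is the single $2L_{n-1}$ from the inductive hypothesis, and
\[
|y_j-y_{j'}|_\infty \ge 4[L_n/r]-4L_{n-1} > [L_n/r]
\]
follows cleanly from $[L_n/r]\ge 2L_{n-1}$ (a consequence of $l_0\ge 2r$), while the cardinality check $4[L_n/r]\cdot N4^d + 2L_{n-1} < L_n$ reduces exactly to $r\ge c_3(N)$. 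If you adopt this extraction---choose the centers in $\mathbb{L}_n$ on spheres near the path, skipping the path-to-$\mathbb{L}_n$ rounding---your argument will go through with $c_3(N)$ as stated.
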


\begin{proof}
We proceed by induction over $n$. Clearly, \eqref{L:CRAMGEN_conclusion} holds for $n=0$ and arbitrary $x\in \mathbb{L}_0$. We now suppose it holds for $n-1$ and all $x\in \mathbb{L}_{n-1}$ (with $n \geq 1$). We consider an arbitrary vertex $x_0\in \mathbb{L}_n$ and assume that $\big\{ B(x_0,L_n)\stackrel{\text{bad}}{\longleftrightarrow}S(x_0,2L_n)\big\}$ occurs. To prove \eqref{L:CRAMGEN_conclusion}, we thus need to show that
\begin{equation} \label{L:CRAMproof1_1}
\text{there exists a vertex $y \in \mathbb{L}_n \cap \big( x_0 + [-2L_n,2L_n)^d \big)$ which is $n$-bad}.
\end{equation}
By definition, since the event $\big\{B(x_0,L_n)\stackrel{\text{bad}}{\longleftrightarrow}S(x_0,2L_n)\big\}$ occurs, there exists a nearest-neighbor path $\pi$ of bad vertices (in $\mathbb{L}_0$) connecting $B(x_0,L_n)$ to $S(x_0,2L_n)$. We claim that $\pi$ intersects the spheres $S(x_0, L_n+ 4[L_{n}/r]j)$, for all $j= 0,\dots, N 4^d$ (observe in passing that $[L_{n}/r] = [l_0 L_{n-1}/r] \geq 2L_{n-1} \geq 2$, for all $n \geq 1$). Indeed, since $r \geq c_3(N)$ by assumption (recall the definition of $c_3(N)$ in \eqref{c_3}), this follows from the fact that
\begin{equation}\label{L:CRAMproof4bis}
\begin{split}
4 \Big[\frac{L_{n}}{r}\Big] \cdot N4^d &< 4 \Big[\frac{L_{n}}{r}\Big] \cdot N4^d + 2L_{n-1} \\
&\leq 4 \frac{L_{n}}{r} \cdot N4^d + \frac{1}{r}L_n = \frac{4 N4^d + 1}{r}L_n \stackrel{\eqref{c_3}}{<} \frac{c_3(N)}{r}L_n \leq L_n
\end{split}
\end{equation}
(the reason for adding $2L_{n-1}$ will become apparent in a moment). Hence, there exist $N4^d+1$ distinct vertices $z_j \in \mathbb{L}_{n-1}\cap S(x_0, L_n+ 4[L_{n}/r]j)$, $0 \leq j \leq N 4^d$, such that $\pi \cap B(z_j,L_{n-1})\ne \emptyset$, for all $j$. Moreover, \eqref{L:CRAMproof4bis} also shows that  
\begin{equation}\label{L:CRAMproof4bisbis}
B(z_j,2L_{n-1}) \subset \big( \mathbb{Z}^d \cap(x_0 + [-2L_n,2L_n)^d)\big), \text{ for all $0 \leq j \leq N4^d$}, 
\end{equation}
thus $\pi$ connects $ B(z_j,L_{n-1})$ to $ S(z_j,2L_{n-1})$ (since it connects $B(x_0,L_n)$ to $S(x_0,2L_n)$), i.e. the events $\big\{B(z_j,L_{n-1})\stackrel{\text{bad}}{\longleftrightarrow}S(z_j,2L_{n-1})\big\}$, $0 \leq j \leq N 4^d$, all occur. Note that, since $2L_{n-1} \leq [L_n/r]$, the sets $B(z_j,2L_{n-1})$, $0 \leq j \leq N 4^d$, are all disjoint. 

We now apply the induction hypothesis individually to each of the events $\big\{B(z_j,L_{n-1})\stackrel{\text{bad}}{\longleftrightarrow}S(z_j,2L_{n-1})\big\}$, $0 \leq j \leq N 4^d$, and obtain that
\begin{equation}\label{L:CRAMproof1_2}
\begin{split}
&\text{there exist $N4^d+1$ vertices $y_j \in \mathbb{L}_{n-1} \cap (z_j + [-2L_{n-1}, 2L_{n-1})^d)$,} \\ 
&\text{$0 \leq j \leq N 4^d$, which are $(n-1)$-bad.}
\end{split}
\end{equation}
Furthermore, by construction, for arbitrary $0 \leq j<j' \leq N 4^d$,
\begin{equation} \label{L:CRAMproof1_3}
\begin{split}
|y_j-y_{j'}|_\infty &\geq d\big(B(z_j, 2L_{n-1}), B(z_{j'}, 2L_{n-1})\big)\\
&=|z_j-z_{j'}|_\infty - 4L_{n-1} \geq 4 \Big[\frac{L_{n}}{r}\Big] - 4L_{n-1} > \Big[\frac{L_{n}}{r}\Big],
\end{split}
\end{equation}
where we have used in the last step that $2L_{n-1} \leq [L_n/r]$, for all $n\geq 1$ (see above \eqref{L:CRAMproof4bis}).    

The family $\{B_{n,y'}; \; y' \in \mathbb{L}_n \cap \big( x_0 + [-2L_n,2L_n)^d \big)\}$ consists of $4^d$ disjoint boxes (recall \eqref{B_n,x}), the union of which is the set $\mathbb{Z}^d \cap \big( x_0 + [-2L_n,2L_n)^d \big)$. In particular, by \eqref{L:CRAMproof1_2} and \eqref{L:CRAMproof4bisbis}, the latter set contains all $N4^d+1$ vertices $y_j$, $0\leq j \leq N4^d$, hence,
\begin{equation}\label{L:CRAMproof1_4}
\begin{split}
&\text{there exists a vertex $y  \in \mathbb{L}_n \cap \big( x_0 + [-2L_n,2L_n)^d \big)$ such that $B_{n,y}$} \\ 
&\text{contains at least $N+1$ of the vertices in the set $\{y_j; \; 0\leq j \leq N4^d \}$}.
\end{split}
\end{equation}
We now show that 
\begin{equation} \label{L:CRAMproof1_5}
\text{the vertex $y$ defined by \eqref{L:CRAMproof1_4} is $n$-bad}.
\end{equation}
By \eqref{L:CRAMproof1_2} and \eqref{L:CRAMproof1_4}, the set $B_{n,y} \cap \{y_j; \; 0\leq j \leq N4^d \}$ contains at least $N+1$ distinct $(n-1)$-bad vertices, each having one (or several) of only $N$ different types (see below \eqref{bad_events_general}). Hence, there are at least two vertices $\bar{y}_1,\bar{y}_2 \in B_{n,y} \cap \{y_j; \; 0\leq j \leq N4^d \}$ which have the same type, i.e. for some $i_0 \in \{1,\dots, N\}$, the events $\mathcal{B}_{n-1,\bar{y}_1}^{(i_0)}$ and $\mathcal{B}_{n-1,\bar{y}_2}^{(i_0)}$ both occur. By \eqref{L:CRAMproof1_3}, $|\bar{y}_1-\bar{y}_2|_\infty > L_n/r$. Thus, the event
\begin{equation*} 
\mathcal{B}_{n,y}^{(i_0)} \stackrel{\eqref{A_n,xRECURSION}}{=} \bigcup_{\substack{y',\; y'' \in \mathbb{L}_{n-1}\cap B_{n,y} \\ |y'-y''|_\infty > L_{n}/r}} \mathcal{B}_{n-1,y'}^{(i_0)} \cap \mathcal{B}_{n-1,y''}^{(i_0)} \quad \big(\supseteq \mathcal{B}_{n-1,\bar{y}_1}^{(i_0)} \cap \mathcal{B}_{n-1,\bar{y}_2}^{(i_0)}\big)
\end{equation*}
occurs, that is, the vertex $y$ is $n$-bad of type $i_0$, and in particular, it is $n$-bad. Therefore, \eqref{L:CRAMproof1_5} holds, which yields \eqref{L:CRAMproof1_1}, and completes the proof of Lemma \ref{L:CRAMGEN}.
\end{proof}

The geometric Lemma \ref{L:CRAMGEN} has the following, more quantitative corollary, tailored to our future purposes.

\begin{lemma} \label{L:CRAM} $(N \geq 1)$ 

\medskip
\noindent If, for some $L_0 \geq 1$, $r \geq c_3(N)$, and $l_0 \geq 2r$,
\begin{equation} \label{cram_condition}
 P\big[\mathcal{B}_{n,x}^{(i)}\big] \leq 2^{-2^n},  \text{ for all $n\geq 0$, $x\in \mathbb{L}_n$, $1 \leq i \leq N$},
\end{equation}
then, for this choice of $L_0$, $r$ and $l_0$,
\begin{equation} \label{cram_conclusion}
 P\big[B(x,L_n)\stackrel{\text{bad}}{\longleftrightarrow}S(x,2L_n) \big] \leq N4^d \cdot  2^{-2^n}, \text{ for all $n\geq 0$, $x\in \mathbb{L}_n$.}
\end{equation}
\end{lemma}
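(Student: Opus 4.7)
The plan is to apply the geometric Lemma \ref{L:CRAMGEN} to reduce the long-range crossing event to a union of finitely many local ``$n$-bad'' events at the top scale, and then control each of these via the hypothesis \eqref{cram_condition} together with a union bound over the $N$ possible types.

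Concretely, fix $n \geq 0$ and $x \in \mathbb{L}_n$, and assume $L_0, r, l_0$ satisfy the conditions of Lemma \ref{L:CRAMGEN} (which are inherited from the present hypotheses, since $r \geq c_3(N)$ and $l_0 \geq 2r$). Applying \eqref{L:CRAMGEN_conclusion}, one obtains
\begin{equation*}
\big\{B(x,L_n)\stackrel{\text{bad}}{\longleftrightarrow}S(x,2L_n)\big\} \subseteq
\bigcup_{y\in \mathbb{L}_n \cap (x+ [-2L_n,2L_n)^d)} \bigcup_{i=1}^N \mathcal{B}_{n,y}^{(i)},
\end{equation*}
where we used the definition of ``$n$-bad'' (see below \eqref{bad_events_general}) to unfold the type index $i$.

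The cardinality of $\mathbb{L}_n \cap (x+ [-2L_n,2L_n)^d)$ equals $4^d$, since the cube $x+[-2L_n,2L_n)^d$ is a disjoint union of $4^d$ translates of $[0,L_n)^d$ and each such translate contains exactly one point of $\mathbb{L}_n$. Thus, by the union bound and the hypothesis \eqref{cram_condition},
\begin{equation*}
P\big[B(x,L_n)\stackrel{\text{bad}}{\longleftrightarrow}S(x,2L_n) \big] \leq \sum_{y\in \mathbb{L}_n \cap (x+ [-2L_n,2L_n)^d)} \sum_{i=1}^N P\big[\mathcal{B}_{n,y}^{(i)}\big] \leq N 4^d \cdot 2^{-2^n}.
\end{equation*}
This yields \eqref{cram_conclusion}. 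No step presents any real obstacle here: the whole content lies in Lemma \ref{L:CRAMGEN}, and Lemma \ref{L:CRAM} is essentially a quantitative repackaging obtained by counting boxes and summing over types.
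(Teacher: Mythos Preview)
Your proof is correct and follows essentially the same approach as the paper: apply Lemma~\ref{L:CRAMGEN}, count the $4^d$ points of $\mathbb{L}_n$ in the box $x+[-2L_n,2L_n)^d$, and take a union bound over these points and over the $N$ types. The paper's version is just a slightly terser rendering of the same computation.
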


In essence, Lemma \ref{L:CRAM} asserts that the probability of having long paths of bad vertices in $\mathbb{L}_0$ is small, provided \eqref{cram_condition} holds. In applications, we will use Proposition \ref{P:DEC_INEQ_PROPAGATED} to ensure the latter condition is satisfied.

\begin{proof}
The conditions of Lemma \ref{L:CRAMGEN} are satisfied, thus \eqref{L:CRAMGEN_conclusion} holds. Hence,
\begin{equation*}
 P\big[B(x,L_n)\stackrel{\text{bad}}{\longleftrightarrow}S(x,2L_n) \big] \stackrel{\eqref{L:CRAMGEN_conclusion}}{\leq} 4^d \sup_{y\in \mathbb{L}_n} P \Big[ \bigcup_{i=1}^N \mathcal{B}_{n,y}^{(i)} \Big] \stackrel{\eqref{cram_condition}}{\leq} N4^d \cdot 2^{-2^n},
\end{equation*}
for all $n \geq 0$ and $x\in \mathbb{L}_n$, which yields \eqref{cram_conclusion}.
\end{proof}

\begin{remark} 
 1) Even though we will not need this below, we note that the conclusion \eqref{L:CRAMGEN_conclusion} of Lemma \ref{L:CRAMGEN} (and \eqref{cram_conclusion} of Lemma \ref{L:CRAM}) continues to hold if one replaces $\big\{B(x,L_n)\stackrel{\text{bad}}{\longleftrightarrow}S(x,2L_n)\big\}$ in \eqref{L:CRAMGEN_conclusion} by the event that $B(x,L_n)$ is connected to $S(x,2L_n)$ by a $*$-nearest-neighbor path of bad vertices in $\mathbb{L}_0$. The above proof still works in this case. Moreover, for the sole purpose of Lemma \ref{L:CRAMGEN}, the only relevant condition on the events $\mathcal{B}_{n,x}^{(i)}$, apart from the ``cascading property''  \eqref{A_n,x}, is that the events at level $n=0$ be $\mathbb{L}_0$-adapted. But their precise form, as given by \eqref{bad_events_general}, is immaterial (only adapted to our later purposes). 

\medskip
\noindent 2) The bound \eqref{cram_conclusion} obtained in Lemma \ref{L:CRAM} will enable us to deduce not only that the connectivity functions of the random sets $\mathcal{L}^{\geq h}$, $\mathcal{I}^{u,\alpha}$, $\mathcal{V}^{u,\alpha}$ tend to $0$ as distance grows to infinity in a certain region of their respective parameter spaces (i.e. that there is a non-trivial sub-critical regime), but also that their decay is stretched exponential when the parameters are sufficiently ``far away'' from the critical points, see \eqref{T:GFF_NOPERC3} and Remark \ref{R:U_*} below. \hfill $\square$
\end{remark}

\medskip

With Lemma \ref{L:CRAM} at hand, we are ready to prove the main result of this section concerning ``two-sided'' level set percolation for the Gaussian free field on $\mathbb{Z}^d$, $d\geq 3$. We introduce the map $\psi_h: \mathbb{R}^{\mathbb{Z}^d}\rightarrow \{ 0,1 \}^{\mathbb{Z}^d}$, $(\varphi_x)_{x \in \mathbb{Z}^d} \mapsto \big( 1\{ |\varphi_x| \geq h \} \big)_{x\in \mathbb{Z}^d}$, for $h\geq 0$, consider the measure 
\begin{equation} \label{Q_h,G}
Q_h^G = \psi_h \circ P^G \text{ (on $\{0,1\}^{\mathbb{Z}^d}$)},
\end{equation}
and recall the definition of the critical parameter $\mathfrak{h}_* = \inf\{h \geq 0 ; \; Q_h^G[0 \leftrightarrow \infty] = 0\}$ in \eqref{h_*}. The following theorem strengthens the result $(2.65)$ of \cite{RS}.  

\begin{theorem} \label{T:GFF_NOPERC} For all sufficiently large levels $h \geq 0$, the sets $\mathcal{L}^{\geq h}$ do not percolate, i.e. 
\begin{equation} \label{T:GFF_NOPERC1}
(0\leq) \ \mathfrak{h}_*(d) < \infty,  \text{ for all $d\geq 3$}.
\end{equation}
Moreover, there exist positive constants $c_4$, $c$, $c'$ and $0 < \rho <1$ such that
\begin{equation}  \label{T:GFF_NOPERC2}
Q_h^G [B(0,L) \longleftrightarrow S(0,2L)] \leq c \cdot e^{-c'L^{\rho}},  \text{ for all $h \geq c_4$ and $L \geq 1$}. 
\end{equation}
\end{theorem}

\begin{proof} First, we note that \eqref{T:GFF_NOPERC1} follows from \eqref{T:GFF_NOPERC2}. Indeed,
\begin{equation} \label{T:GFF_NOPERCpf1}
 Q_h^G[0 \longleftrightarrow \infty] \leq Q_h^G [B(0,L) \longleftrightarrow S(0,2L)],  \text{ for all } h\geq0, \; L \geq 1.
\end{equation}
By \eqref{T:GFF_NOPERC2}, the quantity on the right-hand-side goes to zero as $L \to \infty$ for $h\geq c_4$, whence $Q_h^G[0 \longleftrightarrow \infty]=0$ for all $h\geq c_4$, i.e. $\mathfrak{h}_* \leq c_4 \ (< \infty)$.

We now turn to the proof of \eqref{T:GFF_NOPERC2}, which makes use of the renormalization scheme introduced above. Thus, we consider a sequence of length scales $(L_{n})_{n\geq 0}$ as defined in \eqref{L_n}, with
\begin{equation} \label{T:GFF_NOPERCpf2}
 L_0 = 10, \quad l_0 = 2r, \quad r=c_3(2) \quad \text{(see \eqref{c_3} for the definition of $c_3(N)$)},
\end{equation}
and corresponding renormalized lattices $(\mathbb{L}_n)_{n\geq0}$, see \eqref{LL_n}. In particular, condition \eqref{rparameter} is satisfied, and $l_0 \geq 100$ by definition of $c_3(N)$. For $h\geq 0$ and $x\in \mathbb{L}_0$, we introduce the events
\begin{equation} \label{T:GFF_NOPERCpf3}
 \begin{split}
  &\mathcal{B}_{0,x}^{(1)}(h) = \big\{ \max_{y\in B_{0,x}} \varphi_y \geq h \big\}, \\ 
  &\mathcal{B}_{0,x}^{(2)}(-h) = \big\{ \min_{y\in B_{0,x}} \varphi_y \leq - h \big\}
 \end{split}
\end{equation}
(see \eqref{B_n,x} for the definition of the boxes $B_{n,x}$), and  define the events $\mathcal{B}_{n,x}^{(1)}(h)$, $\mathcal{B}_{n,x}^{(2)}(-h)$, for all $n\geq 0$, $x\in \mathbb{L}_n$, $h\geq 0$, as in \eqref{A_n,x}, i.e.
\begin{equation} \label{T:GFF_NOPERCpf4} 
\mathcal{B}_{n,x}^{(1)}(h)  =  \bigcup_{\mathcal{T}\in \Lambda_{n,x}} \bigcap_{m\in T^{(n)}} \mathcal{B}_{0,\mathcal{T}(m)}^{(1)}(h), 
\end{equation}
and similarly for $\mathcal{B}_{n,x}^{(2)}(-h)$. Since $\mathcal{B}_{0,x}^{(1)}(h)= \Phi_h^{-1}(A_{0,x})$ with $\Phi_h$ given by \eqref{phi_h} and $A_{0,x}$ as defined in \eqref{A}, we obtain
\begin{equation} \label{T:GFF_NOPERCpf5}
P^G[\mathcal{B}_{n,x}^{(1)}(h)] \stackrel{\eqref{T:GFF_NOPERCpf4}}{\leq} |\Lambda_{n,x}| \cdot \sup_{\mathcal{T}\in \Lambda_{n,x}}P^G\Big[\bigcap_{m\in T^{(n)}} \mathcal{B}_{0,\mathcal{T}(m)}^{(1)}(h) \Big] \stackrel{ \eqref{Lambda_n,x}, \eqref{p_n}}{\leq} (l_0^{2d})^{2^n} \cdot p_n^A(h), 
\end{equation}
for $n\geq 0$, $x\in \mathbb{L}_n$, $h\geq 0$. For the remainder of this proof, $A$ will solely refer to the collection of events defined in \eqref{A}.

We now consider the sequence $(h_n)_{n\geq 0}$ defined in \eqref{h_n}. Since $L_0$, $r$ and $l_0$ are fixed, cf. \eqref{T:GFF_NOPERCpf2}, only $h_0$ in \eqref{h_n} remains to be selected. By an elementary estimate, we obtain, for all $h_0 > 0$,
\begin{equation} \label{T:GFF_NOPERCpf6}
\begin{split}
 p_0^A(h_0) \stackrel{\eqref{p_n},\eqref{T:GFF_NOPERCpf3}}{=} P^G \big[\mathcal{B}_{0,0}^{(1)}(h_0)\big] 
& = P^G \big[\max_{y\in B_{0,0}} \varphi_y \geq h_0\big] \\
&\leq  |B_{0,0}| \cdot (2 \pi g(0))^{-1/2} \int_{h_0}^\infty e^{-t^2/2g(0)} dt \\
&\leq c h_0^{-1}e^{-h_0^2/2g(0)},
\end{split}
\end{equation}
where we have also used translation invariance, see Remark \ref{R:DEC_INEQ}, 2), in the first step. (One could also have used the BTIS-inequality, see for example Theorem 2.1.1 in \cite{AT}, to bound $p_0^A(h_0)$.) Now, since $K_0$ as defined in \eqref{K_0} is completely determined by the choice of $l_0$ in \eqref{T:GFF_NOPERCpf2}, \eqref{T:GFF_NOPERCpf6} yields
\begin{equation} \label{T:GFF_NOPERCpf8}
 p_0^A(h_0) \leq e^{-K_0}, \text{ for all } h_0 \geq c',
\end{equation}
i.e., condition \eqref{p_0_cond} holds for sufficiently large $h_0$. Setting $h_0=c'$ and recalling that $h_\infty = \lim_{n\to\infty} h_n$ is finite, see \eqref{h_infty}, we thus obtain, by virtue of Proposition \ref{P:DEC_INEQ_PROPAGATED},
\begin{equation} \label{T:GFF_NOPERCpf9}
 p_n^A(h_\infty) \stackrel{\eqref{p_n_monotone}}{\leq} p_n^A(h_n) \stackrel{\eqref{p_n_bounds}, \; \eqref{K_0}}{\leq} (2  l_0^{2d})^{-2^n}, \text{ for all } n\geq 0,
\end{equation}
and thus, together with \eqref{T:GFF_NOPERCpf5},
\begin{equation} \label{T:GFF_NOPERCpf10}
 P^G \big[\mathcal{B}_{n,x}^{(1)}(h_\infty)\big] \leq 2^{-2^n}, \text{ for all $n\geq 0$, $x\in \mathbb{L}_n$}.
\end{equation}
Since $\varphi = (\varphi_x)_{x\in \mathbb{Z}^d}$ has the same law as $-\varphi$ under $P^G$, we deduce from the definition \eqref{T:GFF_NOPERCpf3} that $P^G\big[\mathcal{B}_{n,x}^{(2)}(-h)\big]= P^G\big[\mathcal{B}_{n,x}^{(1)}(h)\big]$, for all $h\geq0$, $n\geq 0$ and $x\in \mathbb{L}_n$. Thus, the bounds \eqref{T:GFF_NOPERCpf10} also hold with $\mathcal{B}_{n,x}^{(2)}(-h_\infty)$ in place of $\mathcal{B}_{n,x}^{(1)}(h_\infty)$.

We define a vertex $y\in \mathbb{L}_0$ to be bad if the event $\mathcal{B}_{0,y}^{(1)}(h_\infty) \cup \mathcal{B}_{0,y}^{(2)}(-h_\infty)$ occurs, i.e. if $|\varphi_z| \geq h_{\infty}$ for some $z\in B_{0,y}$, and observe that, if there exists a nearest-neighbor path $\pi$ in $\mathcal{L}^{\geq h_\infty}$ connecting $B(x,L_n)$ to $S(x,2L_n)$, for some $n \geq 0$ and $x \in \mathbb{L}_n$, then there exists a nearest-neighbor path of bad vertices in $\mathbb{L}_0$ connecting $B(x,L_n)\cap \mathbb{L}_0$ to $S(x,2L_n)\cap \mathbb{L}_0$. Indeed, the trace $\pi_0$ of $\pi$ on $\mathbb{L}_0$, cf. \eqref{trace}, is such a path: by construction, it is a nearest-neighbor path in $\mathbb{L}_0$, see \eqref{trace_path_nn}, which connects $B(x,L_n)\cap \mathbb{L}_0$ to $S(x,2L_n)\cap \mathbb{L}_0$. Moreover, if $y$ is any vertex in $\mathbb{L}_0$ traversed by $\pi_0$, then $\pi \cap B_{0,y} \ne \emptyset$, so in particular $\max_{z\in B_{0,y}} |\varphi_z| \geq h_\infty$, i.e. $y$ is a bad vertex. The probability of the event $\big\{ B(x,L_n)\stackrel{\text{bad}}{\longleftrightarrow}S(x,2L_n) \big\}$, which refers to the existence of a nearest-neighbor path of bad vertices (in $\mathbb{L}_0$) connecting $B(x,L_n)\cap \mathbb{L}_0$ to $S(x,2L_n)\cap \mathbb{L}_0$, can be bounded using Lemma \ref{L:CRAM} (with $N=2$, $P=P^G$, $\zeta_x^{(1)}= 1\{ \varphi_x \geq h_\infty\}$ and $\zeta_x^{(2)}= 1\{ \varphi_x \leq - h_\infty\}$, $x \in \mathbb{Z}^d$, see \eqref{bad_events_general} and \eqref{T:GFF_NOPERCpf3}), which applies due to \eqref{T:GFF_NOPERCpf10} and the choices in \eqref{T:GFF_NOPERCpf2}. All in all, we obtain
\begin{equation} \label{T:GFF_NOPERCpf11}
Q_{h_\infty}^G[B(x,L_n) \longleftrightarrow S(x,2L_n)] \leq P^G\big[B(x,L_n)\stackrel{\text{bad}}{\longleftrightarrow}S(x,2L_n) \big] \stackrel{\eqref{cram_conclusion}}{\leq} c 2^{-2^n}, 
\end{equation}
for all $n\geq 0$, $x \in \mathbb{L}_n$. We now set $\rho = \log 2 / \log l_0$, whence $2^n = l_0^{n\rho} = (L_n / L_0)^{\rho}$. Given $L \geq 1$, we first assume there exists $n \geq 0$ such that $4L_n \leq L < 4L_{n+1}$. Then, since 
\begin{equation} \label{T:GFF_NOPERCpf12}
Q_{h_\infty}^G \big[ B(0,L) \longleftrightarrow  S(0,2L) \big] \leq Q_{h_\infty}^G \bigg[ \bigcup_{ \substack{x \in \mathbb{L}_n : \\ B(x,L_n) \cap  S(0,L) \neq \emptyset}} \big\{ B(x,L_n) \longleftrightarrow S(x,2L_n) \big\} \bigg],
\end{equation}
and the number of sets contributing to the union on the right-hand side is bounded by $cl_0^{d-1}$, \eqref{T:GFF_NOPERCpf11} and \eqref{T:GFF_NOPERCpf12} readily imply \eqref{T:GFF_NOPERC2} with $h=h_\infty$, for all  $L\geq 4L_0$, and by adjusting the constants $c,c'$, for $L < 4 L_0$ as well. Observing that $Q_{h}^G [ B(0,L) \longleftrightarrow  S(0,2L) ]$ is a decreasing function of $h$, for all $L\geq 1$, it follows that \eqref{T:GFF_NOPERC2} holds for all $h\geq h_\infty \stackrel{\text{def.}}{=}c_4$. This completes the proof of Theorem \ref{T:GFF_NOPERC}.
\end{proof}

\begin{remark} \label{R:h_*positive} 
1) From  \eqref{T:GFF_NOPERC2}, one easily deduces that
\begin{equation} \label{T:GFF_NOPERC3}
Q_h^G [0 \longleftrightarrow x] \leq c \cdot e^{-c''|x|^{\rho}}, \text{ for all $h \geq c_4$ and $x \in \mathbb{Z}^d$}, 
\end{equation}
for some positive constant $c''$ and $c, \rho $ as in \eqref{T:GFF_NOPERC2}, i.e. that the connectivity function $Q_h^G [0 \longleftrightarrow x]$ of the level set $\mathcal{L}^{\geq h}$ has stretched exponential decay in $x$ for sufficiently large $h$. Indeed, \eqref{T:GFF_NOPERC3} follows since
\begin{equation*}
Q_{h}^G [ 0 \longleftrightarrow  x ] \leq Q_{h}^G [ B(0,L) \longleftrightarrow  S(0,2L) \big] \stackrel{\eqref{T:GFF_NOPERC2}}{\leq} c \cdot e^{- c''|x|^\rho}
\end{equation*}
whenever $ 2L \leq |x|_\infty < 2(L+1)$, for all $h \geq c_4$.

\medskip
\noindent 2) $\mathfrak{h}_*$ is \textit{strictly} positive in large dimensions. Let $h_*$ denote the critical parameter for percolation of the sets $E_\varphi^{\geq h} = \{x\in \mathbb{Z}^d ; \; \varphi_x \geq h\}$, for $h \in \mathbb{R}$. Since $E_\varphi^{\geq h}$ is contained in $\mathcal{L}^{\geq h}$ for all $h\geq0$, one has $h_{*}(d)\leq \mathfrak{h}_{*}(d)$ for all $d\geq 3$, and by Theorem 3.3 of \cite{RS}, $h_*$ is positive in large dimensions. \hfill $\square$
\end{remark}

\section{Absence of percolation for $\mathcal{I}^{u,\alpha}$ and $\mathcal{V}^{u,\alpha}$} \label{RI_NOPERC}

We now return to random interlacements, and consider the sets $\mathcal{I}^{u,\alpha}$, $\mathcal{V}^{u,\alpha}$ defined in  \eqref{I^u,alpha}, for $u,\alpha \geq 0$. We deduce in Theorem \ref{T:ALPHA_*} below, using the isomorphism theorem \eqref{isom_THM} in conjunction with Theorem \ref{T:GFF_NOPERC}, that $\mathcal{I}^{u,\alpha}$ contains (almost surely) no infinite cluster, for arbitrary $u\geq 0$ and $\alpha = \alpha(u)$ sufficiently large (recall the discussion following \eqref{isom_THM}, which explains the intuitive idea). By similar methods, we are able to show that $\mathcal{V}^{u,\alpha}$, for $ \alpha \geq 0$ and large enough $u = u(\alpha)$, does not percolate either, see Theorem \nolinebreak \ref{T:U_*}. 

Without further ado, we recall the definition of the critical parameter $\alpha_*(u)$ in \eqref{alpha_*} and begin by collecting a few elementary properties of this function. Clearly, $\alpha_*(0)=0$. By construction, see \eqref{L} and \eqref{Q_u,alpha}, the measure $Q_{u', \alpha}$ stochastically dominates the measure $Q_{u, \alpha}$, for all $u' > u \geq 0$ and $\alpha \geq 0$. Noting that $\alpha_*(u) = \sup \{ \alpha \geq 0 \; ; \; Q_{u,\alpha}[0\longleftrightarrow \infty] > 0 \}$ (with the convention $\sup \emptyset = 0$), it follows that 
\begin{equation} \label{alpha_*_monotone}
 \text{$\alpha_*(u)$ is a non-decreasing function of $u \geq 0$}.
\end{equation}
Furthermore, we have the following
 
\begin{theorem} \label{T:ALPHA_*} For all $u \geq 0$,
\begin{equation} \label{T:ALPHA_*1}
\alpha_{*}(u) < \infty
\end{equation} 
$($we recall that $\alpha_*(u)>0$, when $u>0$, by \eqref{T:U_*perc1}$)$. Moreover, for all $u \geq 0$, there exist positive constants $c_5(u)$, $c$, $c'$, and $0 < \rho <1$ such that
\begin{equation} 
 \mathbb{P} \big[B(0,L) \stackrel{\mathcal{I}^{u,\alpha}}{\longleftrightarrow} S(0,2L) \big] \leq c \cdot e^{-c'L^{\rho}},  \text{ for all $\alpha \geq c_5(u)$ and $L \geq 1$}  \label{T:ALPHA_*2}.
\end{equation}
\end{theorem}
\begin{proof}
Let $u \geq 0$. The finiteness of $\alpha_*(u)$ in \eqref{T:ALPHA_*1} follows from \eqref{T:ALPHA_*2}. Indeed, this follows from the same argument as the one used to deduce \eqref{T:GFF_NOPERC1} from \eqref{T:GFF_NOPERC2}. 

We now prove \eqref{T:ALPHA_*2}. Clearly, the law $Q_{u,\alpha}$ of $\big(1\{L_{x,u} > \alpha \} \big)_{x \in \mathbb{Z}^d}$ under $\mathbb{P}$, is stochastically dominated by the law of $\big(1\{L_{x,u} + \varphi_x^2/2 > \alpha \} \big)_{x \in \mathbb{Z}^d}$ under $\mathbb{P} \otimes P^G$, for any $u, \alpha \geq 0$. By the Isomorphism Theorem \eqref{isom_THM}, the latter is the same as the law of $\big(1\{(\varphi_x + \sqrt{2u})^2/2 > \alpha \} \big)_{x \in \mathbb{Z}^d}$, under $P^G$. Moreover, $(\varphi_x + \sqrt{2u})^2/2 > \alpha$ implies $|\varphi_x| > \sqrt{2\alpha} - \sqrt{2u}.$ Thus, for all $u \geq 0$ and $\alpha \geq u$,
\begin{equation}\label{stoch_dom}
\text{$Q_{u,\alpha}$ is stochastically dominated by $Q^G_{h(u,\alpha)}$, with $h(u,\alpha)= \sqrt{2\alpha} - \sqrt{2u} \ (\geq 0)$}
\end{equation}
(see \eqref{Q_h,G} for the definition of $Q^G_h$). In particular, since $\{B(0,L) \longleftrightarrow S(0,2L)\}$ is an increasing event, we obtain, for any $\alpha \geq (\sqrt{2u}+ c_4)^2/2$ (see Theorem \ref{T:GFF_NOPERC} for the definition of $c_4$),
\begin{equation} \label{T:ALPHA*_pf1}
Q_{u,\alpha}[B(0,L) \longleftrightarrow S(0,2L)] \stackrel{\text{\eqref{stoch_dom}}}{\leq}Q_{h(u,\alpha)}^G[B(0,L) \longleftrightarrow S(0,2L)] \stackrel{\eqref{T:GFF_NOPERC2}}{\leq} c \cdot e^{c'L^{\rho}},  \text{ for $L \geq 1$},
\end{equation}
for some positive constants $c,c'$ and $0 < \rho < 1$, where we have used in the last step that $\alpha \geq (\sqrt{2u}+ c_4)^2/2$ implies $h(u,\alpha) \geq c_4$. Hence, \eqref{T:ALPHA*_pf1} yields \eqref{T:ALPHA_*2} with $c_5(u)= (\sqrt{2u}+ c_4)^2/2$. This completes the proof of Theorem \ref{T:ALPHA_*}.
\end{proof}

Finally, we consider the set $\mathcal{V}^{u,\alpha}$, with $u, \alpha \geq0$, and recall the definition of the critical parameter $u_*(\alpha)$ in \eqref{u_*}. We observe that
\begin{equation} \label{u_*_monotone}
 \text{$u_*(\alpha)$ is a non-decreasing function of $\alpha \geq 0$}.
\end{equation}
Indeed, $\mathcal{V}^{u,\alpha'}(\omega) \supset \mathcal{V}^{u,\alpha}(\omega)$, for all $u \geq 0$, $\alpha' > \alpha \geq 0$ and $\omega \in \Omega$ (the space on which $\mathbb{P}$ is defined, see below \eqref{intensity}), thus $ \mathbb{P}\big[ 0 \stackrel{\mathcal{V}^{u,\alpha'}}{\longleftrightarrow}\infty \big] \geq\mathbb{P}\big[ 0 \stackrel{\mathcal{V}^{u,\alpha}}{\longleftrightarrow}\infty \big]$, which yields \eqref{u_*_monotone} since $u_*(\alpha) = \sup \big\{ u\geq 0 ; \; \mathbb{P}\big[ 0 \stackrel{\mathcal{V}^{u,\alpha}}{\longleftrightarrow}\infty \big] > 0 \big\}$.

We also note that, since $\mathcal{V}^{u,0} = \{ x \in \mathbb{Z}^d ; \; L_{x,u} = 0\}$ coincides with the vacant set $\mathcal{V}^u$ of random interlacement at level $u$ introduced in \cite{S1}, we have $u_*(0)=u_*$, where $u_*$ refers to the critical parameter for percolation of $\mathcal{V}^u$. As mentioned in the introduction (see the references below \eqref{u_*}), $u_*$ is known to be strictly positive (and finite) for all $d \geq 3$. We now prove that the set $\mathcal{V}^{u,\alpha}$ undergoes a non-trivial percolation phase transition as $u\geq0$ varies, for every (fixed) value of $\alpha \geq 0$.

\begin{theorem} \label{T:U_*} For all $\alpha \geq 0$,
\begin{equation} \label{T:U_*1}
 (0 <) \ u_*=  u_*(0) \leq u_*(\alpha) < \infty .
\end{equation}
Moreover, for all $\alpha \geq 0$, there exist positive constants $c_6(\alpha)$, $c$ , $c'$ and $0< \rho<1$ such that for all $ u\geq c_6(\alpha)$ and all $L \geq 1$,
\begin{equation} \label{T:U_*2}
 \mathbb{P}\big[B(0,L) \stackrel{\mathcal{V}^{u,\alpha}}{\longleftrightarrow} S(0,2L) \big] \leq c e^{-c' L^\rho}. 
\end{equation}
\end{theorem}

\begin{proof}
We begin with \eqref{T:U_*1}. The inequality $u_*(\alpha) \geq u_*$ for all $\alpha \geq 0$ is immediate from \eqref{u_*_monotone} and $u_*(0) = u_*$. The finiteness of $u_*(\alpha)$ follows from \eqref{T:U_*2} (just as \eqref{T:ALPHA_*2} implies \eqref{T:ALPHA_*1}, see the proof of Theorem \ref{T:ALPHA_*}).

It thus remains to show \eqref{T:U_*2}. The proof encompasses a renormalization argument. Thus, we consider the increasing sequence of length scales $(L_n)_{n\geq 0}$ defined in \eqref{L_n} (and corresponding lattices $\mathbb{L}_n$, for $n \geq 0$, see \eqref{LL_n}), with
\begin{equation} \label{T:U_*pf1}
 L_0 = 10, \ r = c_3(3) \text{ and } l_0=2r \quad \text{(see \eqref{c_3} for the definition of $c_3(N)$)}.
\end{equation}
In what follows, we identify any event $A$ occurring under $\mathbb{P}$ with the event $A \times \mathbb{R}^{\mathbb{Z}^d}$ occurring under $\mathbb{P} \otimes P^G$, and similarly any event $B$ under $P^G$ with $\Omega \times B$. For $\alpha,u \geq 0$ and $x\in \mathbb{L}_0$, we define the events (under $\mathbb{P} \otimes P^G$)
\begin{equation} \label{T:U_*pf2}
 \begin{split}
  &\mathcal{B}_{0,x}^{(1)}(u) = \big\{ \max_{y\in B_{0,x}} \varphi_y \geq \sqrt{u/2} \big\}, \\ 
  &\mathcal{B}_{0,x}^{(2)}(u) = \big\{ \min_{y\in B_{0,x}} \varphi_y \leq - \sqrt{u/2} \big\} \\
  &\mathcal{B}_{0,x}^{(3)}(\alpha,u) = \big\{ \min_{y\in B_{0,x}} \big(L_{y,u} + \frac{1}{2} \varphi_y^2 \big) <  \frac{1}{2} \big( \sqrt{2 \alpha} + \sqrt{u/2} \big)^2 \big\}
 \end{split}
\end{equation}
(see \eqref{B_n,x} for the definition of the boxes $B_{0,x}$). We call a vertex $x\in \mathbb{L}_0$ $(\alpha,u)$-bad if 
\begin{equation} \label{u,alpha_bad_events}
\mathcal{B}_{0,x}^{(1)}(u) \cup \mathcal{B}_{0,x}^{(2)}(u) \cup \mathcal{B}_{0,x}^{(3)}(\alpha, u)
\end{equation}
occurs under $\mathbb{P} \otimes P^G$. The reason for the choices in \eqref{T:U_*pf2} is the following

\begin{lemma} \label{L:U,ALPHA_BAD} $(u,\alpha \geq 0)$

\medskip
\noindent For all $n \geq 0$ and $x \in \mathbb{L}_n$,
 \begin{equation} \label{L:U,ALPHA_BAD1}
  \mathbb{P}\big[ B(x,L_n) \stackrel{\mathcal{V}^{u,\alpha}}{\longleftrightarrow} S(x,2L_n)\big] \leq \mathbb{P} \otimes P^G \big[ B(x,L_n) \stackrel{(\alpha,u)\text{-bad}}{\longleftrightarrow} S(x,2L_n) \big],
 \end{equation}
where $\big\{ B(x,L_n) \stackrel{(\alpha,u)\text{-bad}}{\longleftrightarrow} S(x,2L_n) \big\}$ is the event that there exists a nearest-neighbor path of $(\alpha,u)$-bad vertices in $\mathbb{L}_0$ connecting $B(x,L_n)\cap \mathbb{L}_0$ to $S(x,2L_n)\cap \mathbb{L}_0$.
\end{lemma}
\begin{proof1}
By translation invariance, it suffices to consider the case $x=0$. Let $n\geq 0$. By definition, if $ \big\{B(0,L_n) \stackrel{\mathcal{V}^{u,\alpha}}{\longleftrightarrow} S(0,2L_n) \big\}$ occurs, there exists a nearest-neighbor path $\pi$ of vertices in $\mathcal{V}^{u,\alpha}$ connecting $B(0,L_n)$ to $S(0,2L_n)$. We consider $\pi_0$, the trace of $\pi$ on $\mathbb{L}_0$, see \eqref{trace}, and show that 
\begin{equation} \label{L:U,ALPHA_BADpf1}
\text{all vertices traversed by $\pi_0$ are $(\alpha,u)$-bad.}
\end{equation}
This implies \eqref{L:U,ALPHA_BAD1}, for by construction (cf. \eqref{trace_path_nn}), $\pi_0$ is a nearest-neighbor path in $\mathbb{L}_0$ connecting $B(0,L_n)\cap \mathbb{L}_0$ to $S(0,2L_n)\cap \mathbb{L}_0$, whence $\big\{B(0,L_n) \stackrel{\mathcal{V}^{u,\alpha}}{\longleftrightarrow} S(0,2L_n) \big\} \times \mathbb{R}^{\mathbb{Z}^d} \subseteq \big\{ B(0,L_n) \stackrel{(\alpha,u)\text{-bad}}{\longleftrightarrow} S(0,2L_n) \big\}$, which yields \eqref{L:U,ALPHA_BAD1}. 

Let $x\in \mathbb{L}_0$ be any vertex in $\text{range}(\pi_0)$. By definition of $\pi_0$, $B_{0,x} \cap \pi \ne \emptyset$. In particular,
\begin{equation}\label{L:U,ALPHA_BADpf2}
 L_{y_0} \leq \alpha, \text{ for some $y_0 \in B_{0,x}$}.
\end{equation}
We now assume $\mathcal{B}_{0,x}^{(3)}(\alpha,u)$ does not occur, i.e. $L_{y,u} +  \varphi_y^2/2 \geq  \big( \sqrt{2 \alpha} + \sqrt{u/2} \big)^2/2$, for all $y \in B_{0,x}$. Then,
\begin{equation*}
 \alpha + \frac{1}{2} \varphi_{y_0}^2 \stackrel{\eqref{L:U,ALPHA_BADpf2}}{\geq} L_{y_0} + \frac{1}{2} \varphi_{y_0}^2 \geq \frac{1}{2} \big( \sqrt{2 \alpha} + \sqrt{u/2} \big)^2 \geq \alpha + \frac{u}{4},
\end{equation*}
i.e. $\varphi_{y_0}^2 \geq u/2$, where we have used $(a+b)^2 \geq a^2 + b^2$ for all $a,b \geq 0$ in the last step. Thus, $\mathcal{B}_{0,x}^{(1)}(u)  \cup \mathcal{B}_{0,x}^{(2)}(u) = \{\max_{y\in B_{0,x}} |\varphi_y| \geq \sqrt{u/2}\}$ occurs, and therefore $x$ is $(\alpha,u)$-bad. This completes the proof of \eqref{L:U,ALPHA_BADpf1}, and thus of Lemma \ref{L:U,ALPHA_BAD}. \hfill $\square$
\end{proof1}
We now return to the proof of \eqref{T:U_*2}, and define the events $ \mathcal{B}_{n,x}^{(1)}(u)$, $\mathcal{B}_{n,x}^{(2)}(u)$, $\mathcal{B}_{n,x}^{(3)}(\alpha,u)$, for $n\geq 0$, $x \in\mathbb{L}_n$, and $u,\alpha \geq 0$, by
\begin{equation*}
\mathcal{B}_{n,x}^{(1)}(u)  =  \bigcup_{\mathcal{T}\in \Lambda_{n,x}} \bigcap_{m\in T^{(n)}} \mathcal{B}_{0,\mathcal{T}(m)}^{(1)}(u),
\end{equation*}
and similarly for $\mathcal{B}_{n,x}^{(2)}(u)$, $\mathcal{B}_{n,x}^{(3)}( \alpha,u)$. As in \eqref{T:GFF_NOPERCpf5}, when $i=1$ or $2$, we obtain, using \eqref{T:U_*pf2}, \eqref{Lambda_n,x} and the symmetry of $P^G$,
\begin{equation} \label{T:U_*pf3}
 \mathbb{P} \otimes P^G [\mathcal{B}_{n,x}^{(i)}(u)] \leq \big(l_0^{2d}\big)^{2^n} \cdot p_n^A \big( \sqrt{u/2} \; \big), \text{ for all $n\geq 0$, $x \in\mathbb{L}_n$, $u \geq 0$},
\end{equation}
where $A$ refers to the family of events (on $\{0,1 \}^{\mathbb{Z}^d}$) defined in \eqref{A} and $p_n^A(\cdot)$ is given by \eqref{p_n}. As for the third collection of events, we claim that
 \begin{equation} \label{T:U_*pf4}
 \mathbb{P} \otimes P^G [\mathcal{B}_{n,x}^{(3)}(\alpha,u)] \leq \big(l_0^{2d}\big)^{2^n} \cdot p_n^A \big(\sqrt{u/2}-\sqrt{2 \alpha} \; \big), \text{ for all $n\geq 0$, $x \in\mathbb{L}_n$, $u, \alpha \geq 0$}.
\end{equation}
Indeed, the Isomorphism Theorem \eqref{isom_THM} applied to the events $\mathcal{B}_{0,x}^{(3)}(\alpha,u)$ from \eqref{T:U_*pf2} yields 
\begin{equation*} 
\begin{array}{rcl}
 \mathbb{P} \otimes P^G [\mathcal{B}_{n,x}^{(3)}( \alpha, u)] \hspace{-1ex} & \leq & \hspace{-1ex} |\Lambda_{n,x}| \cdot \underset{\mathcal{T} \in \Lambda_{n,x}}{\text{sup}}  \mathbb{P} \otimes P^G \Big[\underset{m \in T^{(n)}}{\bigcap} \mathcal{B}_{0,\mathcal{T}(m)}^{(3)}(\alpha, u)\Big] \\
&\stackrel{ \eqref{isom_THM}}{=} & \hspace{-1ex}  |\Lambda_{n,x}| \cdot \underset{\mathcal{T} \in \Lambda_{n,x}}{\text{sup}} P^G \Big[\underset{m \in T^{(n)}}{\bigcap} \big\{ \underset{y\in B_{0, \mathcal{T}(m)}}{\text{min}} \big| \varphi_y + \sqrt{2u} \big| < \sqrt{2 \alpha} + \sqrt{u/2} \big\} \Big] \\
& \leq & \hspace{-1ex} |\Lambda_{n,x}| \cdot \underset{\mathcal{T} \in \Lambda_{n,x}}{\text{sup}}  P^G \Big[\underset{m \in T^{(n)}}{\bigcap} \big\{ \underset{y\in B_{0, \mathcal{T}(m)}}{\text{min}} \varphi_y  < \sqrt{2 \alpha} - \sqrt{u/2} \big\} \Big],
\end{array}
\end{equation*}
which, using \eqref{Lambda_n,x} to bound $|\Lambda_{n,x}|$ and symmetry of $P^G$, yields \eqref{T:U_*pf4}.

Since $K_0$ as defined in \eqref{K_0} is fully determined by the choice of $l_0$ in \eqref{T:U_*pf1}, the BTIS-inequality yields (cf. the argument leading to \eqref{T:GFF_NOPERCpf8}) $p_0^A(h) \leq e^{-K_0}$, for all $h\geq c$. The sequence $(h_n)_{n\geq 0}$ defined in \eqref{h_n} with $h_0=c$ (and $L_0$, $l_0$, $r$ given by \eqref{T:U_*pf1}) has a finite limit $h_\infty = \lim_{n\to\infty} h_n$, see \eqref{h_infty}, and Proposition \ref{P:DEC_INEQ_PROPAGATED} implies that $p_n^A(h_\infty) \leq (2 l_0^{2d})^{-2^n}$, for all $n\geq 0$, cf. \eqref{T:GFF_NOPERCpf9}. Together with \eqref{T:U_*pf3}, \eqref{T:U_*pf4}, and setting $\sqrt{u_\infty(\alpha)/2} = \sqrt{2 \alpha} + h_\infty$, this yields
\begin{equation}  \label{T:U_*pf5}
 \begin{split}
 & \mathbb{P} \otimes P^G [\mathcal{B}_{n,x}^{(1)}(u_\infty(\alpha))] =  \mathbb{P} \otimes P^G [\mathcal{B}_{n,x}^{(2)}(u_\infty(\alpha))]  \stackrel{\eqref{T:U_*pf3},\eqref{p_n_monotone} }{\leq}2^{-2^n} \\
 & \mathbb{P} \otimes P^G [\mathcal{B}_{n,x}^{(3)}(\alpha,u_\infty(\alpha))] \stackrel{\eqref{T:U_*pf4}}{\leq}2^{-2^n},
 \end{split}
\end{equation}
for all $n\geq 0$, $x\in \mathbb{L}_n$ and $\alpha \geq 0$. On account of \eqref{T:U_*pf5} and the choice of $r$ in \eqref{T:U_*pf1}, Lemma \ref{L:CRAM} applies (with $N=3$, $P = \mathbb{P} \otimes P^G$, $\zeta_x^{(1)}=1\{ \varphi_x \geq \sqrt{u_\infty(\alpha)/2}\}$, $\zeta_x^{(2)}=1\{ \varphi_x \leq -\sqrt{u_\infty(\alpha)/2}\}$ and $\zeta_x^{(3)}=1\{ L_{y,u_\infty(\alpha)} + \varphi_y^2/2  <  \big( \sqrt{2 \alpha} + \sqrt{u_\infty(\alpha)/2} \big)^2/2\}$, $x \in \mathbb{Z}^d$, see \eqref{bad_events_general} and \eqref{T:U_*pf2}), and we obtain, using Lemma \ref{L:U,ALPHA_BAD}, that for all $n\geq 0$, $x\in \mathbb{L}_n$, $\alpha \geq 0$ and $u \geq u_\infty (\alpha)$ 
\begin{equation} \label{T:U_*pf6}
\begin{array}{rcl}
 \mathbb{P}\big[ B(x,L_n) \stackrel{\mathcal{V}^{u,\alpha}}{\longleftrightarrow} S(x,2L_n)\big] \hspace{-1ex} & \leq & \hspace{-1ex} \mathbb{P}\big[ B(x,L_n) \stackrel{\mathcal{V}^{u_{\infty}(\alpha),\alpha}}{\longleftrightarrow} S(x,2L_n)\big] \\
& \stackrel{\eqref{L:U,ALPHA_BAD1}}{\leq} & \hspace{-1ex} \mathbb{P} \otimes P^G \big[ B(x,L_n) \stackrel{(\alpha,u_{\infty}(\alpha))\text{-bad}}{\longleftrightarrow} S(x,2L_n) \big] \\
& \stackrel{\eqref{u,alpha_bad_events}, \eqref{cram_conclusion}}{\leq} & \hspace{-1ex} c 2^{-2^n},
\end{array}
\end{equation}
(where we also used in the first line that the law $\widetilde{Q}_{u,\alpha}$ of $(1\{x \in \mathcal{V}^{u,\alpha}\})_{x \in \mathbb{Z}^d}$, is stochastically dominated by $\widetilde{Q}_{u',\alpha}$, for all $u \geq u'$). By the same interpolation argument as that in \eqref{T:GFF_NOPERCpf12}, we deduce from \eqref{T:U_*pf6}
that $\mathbb{P}\big[ B(0,L) \stackrel{\mathcal{V}^{u,\alpha}}{\longleftrightarrow} S(0,2L)\big] \leq ce^{-c' L^\rho} $, for some $c,c' >0$, $\rho = \log 2 / \log l_0$ and all $u \geq u_\infty (\alpha)$. Hence, \eqref{T:U_*2} holds with $c_6(\alpha)= u_\infty (\alpha)$. This completes the proof of Theorem \ref{T:U_*}.
\end{proof}

\begin{remark} \label{R:U_*} 1) From \eqref{T:ALPHA_*2} and \eqref{T:U_*2}, one easily deduces that the connectivity functions $\mathbb{P}\big[0 \stackrel{\mathcal{I}^{u,\alpha}}{\longleftrightarrow} x\big]$, $\mathbb{P}\big[0 \stackrel{\mathcal{V}^{u,\alpha}}{\longleftrightarrow} x\big]$ have stretched exponential decay in $x$ as $|x| \to \infty$, for all $u \geq 0$ and $\alpha \geq c_5(u)$, respectively for all $\alpha \geq 0$ and $u \geq c_6(\alpha)$ (cf. Remark \ref{R:h_*positive}, 1)).

\medskip

\noindent 2) One may introduce the auxiliary critical parameters (cf. Figure \ref{phasediagram} in Section \ref{INTRODUCTION})
\begin{equation*} \label{alpha_**}
u_{**}(\alpha) = \inf \big\{ u \geq 0  ; \; \mathbb{P}\big[B(0,L) \stackrel{\mathcal{V}^{u,\alpha}}{\longleftrightarrow} S(0,2L)\big]  \text{ has stretched exponential decay in $L$}\big\},
\end{equation*}
and similarly $\alpha_{**}(u)$, where the infimum is now over all $\alpha \geq 0$ and the event in the probability is replaced by $\big\{ B(0,L) \stackrel{\mathcal{I}^{u,\alpha}}\longleftrightarrow S(0,2L) \big\}$ (note in particular that $u_{**}(0)$ coincides with the quantity $u_{**}$ from \cite{SS}). Theorems \ref{T:ALPHA_*} and \ref{T:U_*} imply that $\alpha_*(u) \leq  \alpha_{**}(u) < \infty$ for all $u \geq 0$ and $u_*(\alpha) \leq u_{**}(\alpha) < \infty$ for all $\alpha \geq 0$. This raises the important question whether the parameters actually coincide. \hfill $\square$
\end{remark}

\bigskip

\noindent \textbf{Acknowledgement.} The author wishes to thank his advisor A.-S. Sznitman for suggesting the problem and for useful discussions, and the anonymous referee for valuable comments.

\end{document}